\numberwithin{equation}{section}
\newtheorem{theorem}{Theorem}[section]
\newtheorem{lemma}[theorem]{Lemma}
\newtheorem{proposition}[theorem]{Proposition}
\newtheorem{corollary}[theorem]{Corollary}
\theoremstyle{definition}
\newtheorem{definition}[theorem]{Definition}
\newtheorem{remark}[theorem]{Remark}
\newtheorem{conjecture}[theorem]{Conjecture}
\newtheorem{assumption}{Assumption}
\def\E{{\mathbb E}}
\def\R{{\mathbb R}}
\def\N{{\mathbb N}}
\def\P{{\mathbb P}}
\def\VV{{\mathbb V}}
\def\T{{\mathbb T}}
\def\F{{\mathcal F}}
\newcommand{\Amc}{{\mathcal{A}}}
\newcommand{\Dmc}{{\mathcal{D}}}
\newcommand{\Emc}{{\mathcal{E}}}
\newcommand{\Fmc}{{\mathcal{F}}}
\newcommand{\Imc}{{\mathcal{I}}}
\newcommand{\Jmc}{{\mathcal{J}}}
\newcommand{\Lmc}{{\mathcal{L}}}
\newcommand{\Mmc}{{\mathcal{M}}}
\newcommand{\Nmc}{{\mathcal{N}}}
\newcommand{\Omc}{{\mathcal{O}}}
\newcommand{\Pmc}{{\mathcal{P}}}
\newcommand{\Qmc}{{\mathcal{Q}}}
\newcommand{\Smc}{{\mathcal{S}}}
\newcommand{\Tmc}{{\mathcal{T}}}
\newcommand{\Vmc}{{\mathcal{V}}}
\newcommand{\Ymc}{{\mathcal{Y}}}
\newcommand{\Nbf}{{\mathbf{N}}}
\newcommand{\muhat}{{\hat{\mu}}}
\newcommand{\phihat}{{\hat{\phi}}}
\newcommand{\xihat}{{\hat{\xi}}}
\newcommand{\Yhat}{{\hat{Y}}}
\newcommand{\shat}{\hat{s}}
\newcommand{\ihat}{\hat{i}}
\newcommand{\sigmahat}{\hat{\sigma}}
\newcommand{\rhat}{\hat{r}}
\newcommand{\Bbar}{{\bar{B}}}
\newcommand{\ebar}{{\bar{e}}}
\newcommand{\ibar}{{\bar{i}}}
\newcommand{\omegabar}{{\bar{\omega}}}
\newcommand{\sbar}{{\bar{s}}}
\newcommand{\wbar}{{\bar{w}}}
\newcommand{\Xbar}{{\bar{X}}}
\newcommand{\Erdos}{Erd\H{o}s-R\'enyi}
\newcommand{\set}[1]{\left\{#1\right\}}
\newcommand{\one}{{\boldsymbol{1}}}
\newcommand{\ftil}{{\tilde{f}}}
\newcommand{\xitil}{{\tilde{\xi}}}
\newcommand{\psitil}{{\tilde{\psi}}}
\newcommand{\dtil}{ \tilde{d}}
\newcommand{\vtil}{{\tilde{v}}}
\newcommand{\Tmctil}{{\tilde{\Tmc}}}
\newcommand{\mapvtil}{\tilde{\phi}}
\def\root{\varnothing}
\newcommand{\stateS}{\mathcal{X}}
\newcommand{\stateSS}{\bar{\stateS}}
\newcommand{\natzero}{\N_0}
\newcommand{\law}{\Lmc}
\DeclareSymbolFont{symbolsC}{U}{pxsyc}{m}{n}
\DeclareMathSymbol{\coloneqq}{\mathrel}{symbolsC}{"42}
\newcommand{\V}{\mathbb{V}}
\newcommand{\qhat}{\hat{q}}
\newcommand{\extra}{\star}
\newcommand{\Finfty}{\Fmc}
\newcommand{\infects}{\Imc}
\newcommand{\offspring}{\theta}
\newcommand{\sizeb}{{\hat{\offspring}}}
\newcommand{\x}{x}
\newcommand{\y}{y}
\newcommand{\f}{F}
\newcommand{\z}{z}
\newcommand{\condi}{r}
\newcommand{\project}{\eta}
\title[Exact description of limiting SIR and SEIR dynamics on locally tree-like graphs]{Exact description of limiting SIR and SEIR dynamics on locally tree-like graphs}
\author[Cocomello]{Juniper Cocomello}           
\author[Ramanan]{Kavita Ramanan}
                \thanks{K. Ramanan was supported in part by ARO Grant W911NF2010133 and both authors were supported by the Office of Naval Research under the Vannevar Bush Faculty Fellowship N0014-21-1-2887.} 
\address{Division of Applied Mathematics, Brown University, 182 George Street, Providence, RI 02912} 
\email{juniper\_cocomello@brown.edu, kavita\_ramanan@brown.edu}
\keywords{interacting particle systems; continuous time Markov chains; SIR model; SEIR model, epidemics; outbreak size;
                  sparse graphs; random graphs; local limits; mean-field limits; \Erdos \ random graphs; configuration model; random regular graph; Galton-Watson trees.}
\subjclass{Primary: 60K35; 60F17; Secondary: 60G60; 60J2.}
\thanks{\textit{Acknowledgements:} The second author would like to thank the Simons Institute and the organizers of the workshop on \href{https://simons.berkeley.edu/workshops/graph-limits-nonparametric-models-estimation}{“Graph Limits, Nonparametric Models, and Estimation”} for inviting her to the workshop to present a preliminary version of these results.}
\begin{document}
\begin{abstract}
We study the Susceptible-Infected-Recovered (SIR) and the Susceptible-Exposed-Infected-Recovered (SEIR) models of epidemics, with possibly time-varying rates,  on a class of networks that are locally tree-like, which includes  sparse \Erdos  \ random graphs, random regular graphs, and other configuration models.
We identify tractable systems of ODEs that exactly describe the dynamics of the SIR and SEIR processes in a suitable asymptotic regime in which the population size goes to infinity.  Moreover, in the case of constant recovery and infection rates, we  characterize the outbreak size  as the unique zero of an explicit functional. We use this to show that  a (suitably defined) mean-field prediction always overestimates the outbreak size, and that the outbreak sizes for SIR and SEIR processes with 
 the same initial condition and constant infection and recovery rates coincide.   In contrast,  we show that the outbreak sizes for SIR and SEIR processes with the same 
 time-varying infection and recovery rates can  in  general be quite different. 
 We also demonstrate via simulations the efficacy of our approximations for populations of moderate size. 
\end{abstract}

% Fill in data. If unknown, outcomment the field

\maketitle
%%%%%%%%%%%%%%%%%%%%%%%%%%%%%%%%%%%%%%%%%%%%%%%%%%%%%%%%%%%%%%%%%%%%%%

\section{Introduction}

\noindent 
{\bf Models, Results and Proof Techniques. } 
The Susceptible-Infected-Recovered (SIR)  model has been extensively used to study the spread of infectious diseases, computer viruses, information, and rumors.  In this model, each individual in a population is represented as being in one of three states: susceptible (does not have the disease, but could catch it), infected (has the disease and can spread it to susceptible individuals), or recovered (no longer has the disease and cannot be reinfected). 
The dynamics are governed by a graph, which describes the contact network of individuals in a population, and two strictly positive parameters: $\rho$, the rate at which an infected individual recovers, and $\beta$, the rate at which an infected individual transmits the disease to a susceptible individual with whom it is in  immediate contact.  Random graphs are used to model variation and uncertainly in real-world contact networks. We are interested in the global dynamical behavior that arises from the local interactions of disease spreading. In particular, we are interested in the following questions: how many individuals are at each of the states at any time $t\in[0,\infty)$? What is the total size of the outbreak, that is,  how many individuals were infected during the course of the epidemic?  Answering these questions for large populations can be analytically challenging, and simulations are computationally expensive and do not easily allow for rigorous characterization of qualitative behavior. 

In the present work, we study a continuous-time stochastic SIR process, and a related epidemic model, the Susceptible-Exposed-Infected-Recovered (SEIR) process, where an additional state is considered for individuals that have been exposed to the pathogen but have not yet become infectious.  The SEIR process has been widely used to model the spreading of diseases including the recent SARS-CoV-2 pandemic, for instance, see \cite{Suwardi2020stability, girardi2023anseir ,mwalili2020SEIR}. 
 In both cases, we  allow for the (infection and recovery)  transition  rates to be time-dependent, so as to model effects due to seasonal variations, changes in the virulence of a disease,  developments in treatment options, and changes in public health policies, which are of significant interest in practice \cite{morris2021optimal,lopez2021modified,fisman2007seasonality}.
While the majority of works have considered SIR processes on dense networks, we consider these processes on  sparse networks (i.e., where each individual is connected to a bounded number of individuals), which more faithfully describe real-world networks. We provide tractable approximations for the evolution of fractions  of individuals in  each of the states of the epidemic in terms of a coupled system of   ordinary differential equations (ODEs), see \eqref{eq:f_If_SF_I} and \eqref{eq:gggG-Odes}, and for the outbreak size, which is the final fraction of individuals ever infected.  
Moreover, we  show that these approximations are asymptotically exact, as the size of the population increases to infinity,  when the graph governing the dynamics is locally-tree like. 
More precisely, we consider a broad class of sparse (random)  graph sequences, including sparse \Erdos \ random graphs, random regular graphs, and certain configuration models, 
which are known to  converge in a certain local (weak)   sense to  a random limit that belongs to the class of  unimodular Galton-Watson (UGW) trees; see  Theorem \ref{thm:MAIN_SIR} and Theorem \ref{thm:SEIR_main}.  We refer the reader to Definition \ref{def:UGW} for the definition of a UGW tree, and to \cite{vanderHofstad2023vol2, Aldous2004objective} for an extensive account of local  convergence of graphs. 

Our proof technique  starts by appealing to  a general result in \cite{Ganguly2022hydrodynamic} that shows that for a general class of interacting particle systems that includes the SIR and SEIR processes, the  sequence of empirical measures (equivalently,  fractions of individuals in different states) on any  locally converging sequence of graphs
converges  to the law of the marginal evolution of the root node  in the limit UGW tree  (see also \cite{Lacker2023local,RamICM22} for related results). 
The key step is then to provide a tractable characterization of the  root marginal dynamics of this infinite-dimensional process. 
% (see also \cite{Lacker2023local} for related results for interacting diffusions.  
While for general particle systems the marginal dynamics of the root,  or even of the  root and its neighborhood,  could be  non-Markovian,   
a key step in our proof is to show that  for the SIR and SEIR models, the dynamics simplifies. In fact, we can deduce from our proof that the evolution of the pair of vertices 
consisting of the root and an  offspring is in fact Markovian (see Remark \ref{rem-Markov}).  
 The proof of the latter property relies crucially on certain conditional independence relations that we identify  (see Proposition \ref{prop:cond_ind_prop}) and a certain projection lemma (Proposition \ref{prop:filtering})  in the spirit of \cite[Lemma 9.1]{Ganguly2022nonmarkovian}. 
   These properties are combined with  symmetry properties of the dynamics  (identified in Proposition \ref{prop:symmetry}) to obtain a very tractable description of the evolution of the marginal law of the root in terms of the abovementioned systems of ODEs.  
  
For both the SIR and SEIR models, the associated system of ODEs is then analyzed to characterize the 
 outbreak size in terms of  the moment generating function of the offspring distribution  of the limiting UGW tree, evaluated at the unique zero of an explicitly given functional; see Theorem \ref{thm:SIR-outbreak}.
In the case of constant recovery and infection rates, 
we obtain a simpler characterization of the outbreak size and use it to show 
that the (suitably defined) mean-field prediction always overestimates the outbreak size.
In this setting, we also show 
that although the transient dynamics can be different, 
 the outbreak sizes  for the SIR and SEIR models coincide when they have the same rates and initial conditions. 
In particular, this shows that in this case the outbreak size for the SEIR model does not depend on the rate at which an exposed individual becomes infectious.
In contrast,   we  show that when the rates are time-varying, the outbreak sizes of the corresponding SIR and SEIR processes no longer coincide and can be vastly different even when the (time-varying) ratios of the infection  rate to the recovery rate coincide.  
 For both transient dynamics and the outbreak size, we compare our results with numerical simulations to  demonstrate the  efficacy of  these approximations for populations of even moderate size.   We also show how the ODEs can be used to study the impact of the amplitude and phase of periodically varying rates on the  outbreak size. 

When the infection and recovery rates are constant in time, traditional techniques to analyze the outbreak size of the SIR process exploit a reformulation of the final outbreak size in terms of a bond percolation problem. 
However, it is not apparent if such a simple correspondence exists when the infection and recovery rates are time-varying, and unlike our approach,  percolation-based arguments provide limited insight into the dynamics
of the epidemic process.
Furthermore, our general approach can also be applied  to obtain analogous results for other more general epidemic processes including a class of compartmental models and processes with general recovery distributions; 
 see Remark \ref{rmk:generalize_times} and Remark \ref{rmk:general_process}.  It would also be of interest to investigate  to what extent an analogous approach can be used to provide alternatives to mean-field approximations for other classes of models, for instance, such as those described in \cite{RamQuesta22}. We defer a complete investigation to future work.  \\

\noindent 
{\bf Discussion of Prior Work. }
Understanding epidemic dynamics on networks is an active area of contemporary research. The deterministic SIR model, introduced in \cite{kermack1927contribution}, is a system of coupled ODEs that describes the evolution over time of the fraction of individuals in each of the states of the epidemic, in a population where everyone can come into contact with everyone else.   This is known as the mean-field approximation. The mean-field dynamics are known to emerge as the large $n$ limit of the SIR process defined on the complete graph on $n$ vertices, when the infection rate scales like $\Omc(1/n)$.
The mean-field approximation provides a dramatic reduction of dimensionality, as it captures the global behavior of a size $n$ population by a coupled system of two ODEs. However, most real-world contact networks are sparse, in the sense that the average  number of neighbors for an individual in the network remands bounded even when the population size grows.

Because of this, and the application-driven need to understand epidemic dynamics on more realistic networks, the study of  SIR dynamics on a range of more realistic sparse network structures is an active area of research.  
The work of \cite{schutz2008exact} derives equations for the expected number of individuals in each SIR state on a cycle graph, and compares these results with the corresponding quantities associated with the SIR model on the complete graph, as well as the scaled dynamics that result in the mean-field approximation.
An SIR model on the $\kappa$-regular tree (the infinite tree where every vertex has $\kappa$ neighbors) with general recovery times and time-dependent rates was studied in \cite{gairat2022discrete}. The latter work derives the asymptotic limit, as $\kappa$ goes to infinity, of  the evolution of the fraction of susceptible individuals over time, which recovers the mean-field approximation. 
Differential equations to approximate the fraction of susceptible, infected and recovered individuals for the continuous-time SIR model on configuration model graphs were derived heuristically in 
\cite{Volz2008SIR, Volz2009epidemic}  
and shown to be asymptotically exact, as the population size goes to infinity, in \cite{Decreusefond2012large, janson2015law}.  
In very recent work \cite{hall2023exact}, the authors obtain an explicit representation of the marginal distribution of each node on a finite tree by solving a coupled system of ODEs. This representation is shown to provide an upper bound on the probability that a node is susceptible on general graphs or with more than one initial infection. However, they show, via simulations, that this upper bound is generally not very tight. 

 Existing mathematically rigorous work on the SEIR process focuses on studying the deterministic dynamics that arise in the mean-field regime, see \cite{li1999global}.  
 To the best of our knowledge, not  much is known rigorously about  SEIR processes on  sparse graphs or corresponding  limits. 
In \cite{Zhao2013SEIR}, the authors present an ODE system that they heuristically argue should approximate the fraction for large populations size.
 However, their approximation is not compared with simulations, and it differs from our ODE system, which  is asymptotically exact as the population size approaches infinity. 
 % These differential equations are  similar though not equivalent to the approximation we prove to be asymptotically exact.
 \\

\noindent 
{\bf Organization of the Paper. } 
The rest of the paper is structured as follows.  In Section \ref{sec:notation} we introduce some common notation used throughout the paper. 
In Section \ref{sec:transient} we define the SIR and SEIR processes and state our characterization of the large-population limit of epidemic dynamics (see Theorem \ref{thm:MAIN_SIR} and Theorem \ref{thm:SEIR_main}). In Section \ref{sec:outbreak} we provide a characterization of the outbreak size in the large-population limit (see Theorem \ref{thm:SIR-outbreak} and Theorem \ref{thm:SEIR-outbreak}). The proofs of our results are provided in Section \ref{sec:main_proofs}. They rely on a  conditional independence property that is proven in Section \ref{sec:cond_ind_proof} and some auxiliary results on the SEIR process that are relegated to Appendix \ref{sec:proof-SEIR}. Additionally,  the proof of the well-posedness of the limit ODE systems are  given in Appendix \ref{sec:odes-well-posed}. 

\subsection{Notation}
\label{sec:notation} We briefly overview common notation used throughout the paper. We use $G=(V,E)$ to denote a graph with vertex set $V$ and edge set $E$. When clear from context, we identify a graph with its vertex set, and so for a vertex $v$ we might write $v\in G$ instead of the more accurate $v\in V$. We let $|G|:=|V|$ denote the number of vertices of $G$.  Given $A\subset V$, we let $\partial^G A :=\{w\in V\ : \ \{w,v\}\in E, \ v\in A, w\in V\setminus A\}$ be the \textit{boundary} of $A$. In the case where $A=\{v\}$ is a singleton, we write $\partial^G_v:=\partial^G\{v\}$, and refer to it as the set of \textit{neighbors} of $v$. The \textit{degree} of a vertex is defined as $d_v^G:=|\partial_v^G|$. When unambiguous, we omit the dependence on $G$ from our notation, and write $d_v$ and $ \partial_v$. For $v,w\in G$, we write $v\sim w$ to mean $v\in\partial_w$. Given a set $\Ymc$, a configuration $y\in \Ymc^V$ 
 and $A\subset V$, we write $y_A:= \{y_v\ : \ v\in A\}$, and in the special case when $|A|=2$, $y_{v,w}:=y_{\{v,w\}}$.
 
We let $\natzero=\{0,1,2,...\}$, and let $\Pmc(\natzero)$ be the set of probability measures on $\natzero$. We identify probability measures on $\natzero$ with their probability mass functions. In particular, for $\zeta\in\P(\natzero)$ and $k\in\natzero$, we write $\zeta(k)=\zeta(\{k\})$. For $k\in \R$, we let $\delta_k$ be the Dirac measure at $k$. Given a probability space $(\Omega,\Fmc,\P)$, we denote by $\law(Y)$ the law of a $\Omega-$valued random variable $Y$.

\section{Results on Transient Dynamics}\label{sec:transient}
In Section \ref{sec:SIR} 
we precisely define the SIR process and in Section \ref{sec:ODEs} state the main result that describes the  limiting dynamics on converging sequences of locally tree-like graphs in terms of  solutions to systems of ODEs. In 
 Section \ref{sec:SEIR} we define  the SEIR process and state the corresponding convergence result. 
 
\subsection{SIR Model} \label{sec:SIR}
Fix a graph $G=(V,E)$, the (time-varying) \textit{infection rate}  ${\beta}:[0,\infty)\rightarrow(0,\infty)$, and the (time-varying) \textit{recovery rate} ${\rho}:[0,\infty)\rightarrow(0,\infty)$. We write $\beta_t$ (resp. $\rho_t$) for the value of $\beta$ (resp. $\rho$) at time $t\in[0,\infty)$. The SIR process on $G$, denoted by $X^G$, is  a continuous-time locally interacting Markov chain with the following dynamics.
At any time $t$, each individual $v$ has a state $X^G_v(t)$ in the space $\stateS:=\set{S,I,R}$. The initial states $X^G_v(0)$ are i.i.d. with $\P(X^G_v(0)=S)=s_0$ and $\P(X^G_v(0)=I)=i_0:=1-s_0$ for some $s_0\in (0,1)$.  Given $y\in \emptyset\cup(\cup_{k=1}^\infty \stateS^k)$, representing the configuration of the neighbors of a vertex, we denote by $\infects(y)$ the number of elements of $y$ that are equal to $I$.  At time $t$, each individual $v\in G$ jumps from $S$ to $I$ (i.e., becomes infected) at rate ${\beta_t} I(X^G_{\partial_v}(t-))$, and from $I$ to $R$ (i.e., recovers) at rate ${\rho_t}$. We impose the following mild assumptions on the recovery and infection rate functions.
\begin{assumption}\label{assu:beta_rho}
    The functions $\beta$ and $\rho$ are continuous and there exist $c_1,c_2 \in(0,\infty)$ such that
    \begin{align}\label{eq:assumption_bdd}
        \begin{split}
             c_1<\liminf_{t\rightarrow\infty }\min(\rho_t,\beta_t)< \limsup_{t\rightarrow \infty}\max(\rho_t,\beta_t)<c_2.
        \end{split}
    \end{align}
\end{assumption}
Throughout the paper, we assume that Assumption \ref{assu:beta_rho} holds. 
\begin{remark}\label{rmk:monotonicty}
    If we equip $\stateS$ with the total ordering given by $
    S< I< R$, then  the SIR process is monotonic in the sense that for every $v\in G$ and $s,t\in[0,\infty)$, if $s\leq t$ then  $X_v^G(s)\leq X_v^G(t)$.
\end{remark}

Next, we describe the class of graph sequences that we consider, as well as an associated probability measure on $\natzero$ that characterizes the corresponding local limit.

\begin{assumption} \label{assu:graph_sequence} Suppose the sequence of graphs $\{G_n\}_{n\in\N}$ and  $\offspring \in \Pmc(\natzero)$ satisfy one of the following:  
\begin{enumerate}
        \item \textit{(\Erdos)}. There exists $c>0$ such that, for every $n\in\natzero$, $G_n$ is a \Erdos \ random graph $\text{ER}(n,c/n) $, and
    $\offspring$ is the Poisson distribution with mean $c$.
    \item \textit{(Configuration Model)}. For each $n$, let $\{d_{i,n}\}_{i=1}^n$ be a graphical sequence, such that $\sum_{i=1}^n \delta_{d_{i,n}}$ converges weakly to $\offspring$ as $n\rightarrow\infty$, and $\offspring$ has finite third moment. Let $G_n$ be a graph uniformly chosen among graphs on $n$ vertices with degree sequence $\{d_{i,n}\}_{i=1}^n$. We write $G_n=\text{CM}_n(\offspring)$. 
\end{enumerate}
\end{assumption}
% By taking $\offspring=\delta_\kappa$ with $\kappa\in\N\setminus\{1\}$ the configuration model reduces to the random $\kappa$-regular graph. 
\begin{remark}
    The only place where we use the assumption that $\offspring$ has a finite third moment is in  Proposition \ref{prop:f_If_SF_I} below (and the corresponding result for the SEIR process, Proposition \ref{prop:gggG}). Every result in this paper holds by replacing the assumption that $\offspring$ has finite third moment in Assumption \ref{assu:graph_sequence}(2) with the assumption that $\offspring$ has finite second moment and that the system \eqref{eq:f_If_SF_I}-\eqref{eq:ffF-initial} (and the corresponding system for the SEIR process \eqref{eq:gggG-Odes}-\eqref{eq:gggG-Odes_initial}) has a unique solution on $[0,\infty)$.
\end{remark}
We refer the reader to \cite[Chapter 5 and Chapter 7]{hofstad2016vol1} for an extensive account of random graphs, including precise definitions and well-known properties of the graphs in Assumption \ref{assu:graph_sequence}. The class of graphs we consider is locally tree-like, in a sense that we now make precise. Given $\offspring\in\Pmc(\natzero)$ with finite first moment, we define its \textit{size-biased distribution} $\sizeb\in\Pmc(\natzero)$ by
\begin{equation}\label{eq:sizab}
        \sizeb(k)= \frac{(k+1)\offspring(k+1)}{\sum_{j=0}^\infty j \offspring(j)}, \qquad \text{for } k\in\natzero.
        \end{equation}

        \begin{definition}
        \label{def:UGW}
The unimodular Galton-Watson tree with offspring distribution $\offspring$,  denoted by UGW$(\offspring$), is a rooted random tree where the root has a number of children distributed like $\offspring$ and every vertex in subsequent generations has a number of children distributed like $\sizeb$, independently of the degree of vertices in the same or previous generations. 
\end{definition} 

It is well known that if $\{G_n\}_{n\in\N}$ and $\offspring$ satisfy Assumption \ref{assu:graph_sequence}, then $G_n$ converges in a local sense (\textit{local weak convergence in probability}, as defined in \cite[Definition 2.11]{vanderHofstad2023vol2}; see also \cite[Definition 2.2]{Lacker2023local}  and \cite[Section 2.4]{Ganguly2022hydrodynamic}) to a GWT$(\offspring)$ tree. This is established, for instance,  in \cite[Theorem 2.18 and Theorem 4.1]{van2009randomII}.  

%%%%%%%%%%%%%%%%%%%%%%%%%%%%%%%%%%%%%%%%%%%%%%%%
\subsection{Asymptotic Characterization of SIR dynamics}\label{sec:ODEs}
Our first result is the limit characterization (as the graph size goes to infinity)  of the evolution of the fractions of individuals that, at each time, are in each of the states $\{S,I,R\}$. Given a finite graph $G$, for $t\in[0,\infty)$ we define 
\begin{align}\label{def:fractions}
    \begin{split}
        & s^{G}(t):=\frac{1}{|G|}\sum_{v\in G} \one_{\set{X^G_v(t)=S}},
        \\ & i^{G}(t):=\frac{1}{|G|}\sum_{v\in G} \one_{\set{X^G_v(t)=I}}.
    \end{split}
\end{align}
 We start by establishing the existence and uniqueness of the solution to a certain system of ODEs that will be used to describe the limit.    As is standard practice, we use the dot notation for derivatives with respect to time, and prime notation for derivatives in space. 
\begin{proposition}\label{prop:f_If_SF_I}
    Suppose that $\offspring\in\Pmc(\natzero)$ has finite third moment and let $s_0\in(0,1)$. Then there exists a unique solution $(f_S,\ f_I, \ F_I)$ to the following system of ODEs:
    
\begin{equation} \label{eq:f_If_SF_I}
    \begin{cases}
         \dot f_S=f_Sf_I{\beta} \left(1-\frac{\sum_{k=0}^\infty k \sizeb(k)e^{-k F_I }}{\sum_{j=0}^\infty\sizeb(j) e^{-j  F_I}}\right),
        \\ \dot f_I= f_Sf_I {\beta} \frac{\sum_{k=0}^\infty k \sizeb(k)e^{-k F_I }}{\sum_{j=0}^\infty\sizeb(j) e^{-jF_I}} - f_I({\rho}+{\beta}-{\beta} f_I),
        \\  \dot F_I =\beta f_I,
    \end{cases}
\end{equation}
    with initial conditions 
    \begin{equation} \label{eq:ffF-initial}
    \begin{cases}
         f_S(0)= s_0,
        \\  f_I(0)= 1-s_0,
        \\  F_I(0) =0.
    \end{cases}
    \end{equation}
\end{proposition}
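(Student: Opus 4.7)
The plan is to apply the classical Picard-Lindel\"of theorem locally and then extend the solution to all of $[0,\infty)$ by exhibiting a forward-invariant compact region that confines the trajectory.

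First I would isolate and analyze the function
\[
h(F):=\frac{\sum_{k=0}^\infty k\,\sizeb(k)\,e^{-kF}}{\sum_{j=0}^\infty \sizeb(j)\,e^{-jF}},\qquad F\in[0,\infty),
\]
which is the only place the distribution $\offspring$ enters the right-hand side. The denominator $\psi(F):=\sum_j \sizeb(j)e^{-jF}$ is positive, smooth, and nonincreasing in $F$, so $\psi(F)\ge\psi(F_0)>0$ on any bounded interval $[0,F_0]$. By \eqref{eq:sizab}, the assumption that $\offspring$ has finite third moment is equivalent to $\sizeb$ having finite second moment. Recognizing $h(F)$ as the mean of the tilted law $\nu_F(k)\propto\sizeb(k)e^{-kF}$ and $h'(F)=-\Var_{\nu_F}(K)$, we conclude that both $h$ and $h'$ are bounded on $[0,F_0]$, so $h\in C^1([0,\infty))$. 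Combined with the continuity of $\beta$ and $\rho$ from Assumption \ref{assu:beta_rho}, the right-hand side of \eqref{eq:f_If_SF_I} is continuous in $t$ and locally Lipschitz in $(f_S,f_I,F_I)\in\R^2\times[0,\infty)$, so Picard-Lindel\"of yields a unique maximal solution starting from \eqref{eq:ffF-initial}.

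To promote local to global existence, I would verify that the region $D:=\{(f_S,f_I,F_I): f_S\ge 0,\ f_I\ge 0,\ f_S+f_I\le 1,\ F_I\ge 0\}$ is forward-invariant. Along $\{f_S=0\}$ the first equation gives $\dot f_S=0$; along $\{f_I=0\}$ the second gives $\dot f_I=0$; along $\{F_I=0\}$ the third gives $\dot F_I=\beta_t f_I\ge 0$. The key calculation is on $\{f_S+f_I=1\}$, where the $h(F_I)$ terms cancel between the first two equations:
\[
\dot f_S+\dot f_I \;=\; f_S f_I\beta_t - f_I(\rho_t+\beta_t-\beta_t f_I) \;=\; f_I\bigl(\beta_t(f_S+f_I)-\beta_t-\rho_t\bigr)\;=\;-\rho_t f_I\;\le\;0.
\]
Hence any local solution initialized in $D$ remains in $D$; moreover $F_I(t)\le \int_0^t \beta_s\,ds$ is bounded on every finite interval, which rules out blow-up and extends the solution uniquely to $[0,\infty)$.

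The main obstacle is really the boundary computation above: the nontrivial cancellation of the $h(F_I)$ contributions on $\{f_S+f_I=1\}$ is what confines the trajectory to $D$ and thereby a priori bounds the dynamics. The regularity analysis of $h$ via the third-moment hypothesis on $\offspring$ is the only other delicate point, and it is routine once one notices the tilted-distribution interpretation of $h$ and $h'$.
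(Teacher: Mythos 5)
Your proof is correct and follows essentially the same route as the paper's: local well-posedness from the Lipschitz regularity of the tilted-mean function $h=\Phi_{\sizeb}$ (which is where the finite third moment of $\offspring$, i.e.\ the finite second moment of $\sizeb$, enters), followed by forward-invariance of $\{f_S\ge 0,\ f_I\ge 0,\ f_S+f_I\le 1\}$ via the cancellation in $\dot f_S+\dot f_I$ and the resulting bound on $F_I$ over finite time intervals to exclude blow-up. The only cosmetic differences are that the paper invokes Peano first and then upgrades to uniqueness via a global Lipschitz bound on $\Phi_{\sizeb}$, whereas you go directly through Picard--Lindel\"of with a local Lipschitz estimate.
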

The proof of Proposition \ref{prop:f_If_SF_I} uses standard arguments and is thus  relegated to Appendix \ref{sec:odes-well-posed}. Given $\zeta \in\Pmc(\natzero)$ and $x\in (-\infty,0]$, we define its Laplace transform as follows: 
\begin{equation}\label{eq:mgf}
M_\zeta(x):=\sum_{k\in\natzero}\zeta(k)e^{kx}. 
\end{equation}
% that is, $M_\nu$ is the moment generating function of $\nu$.
 Given $f_S,\ f_I,\ F_I$ as in Proposition \ref{prop:f_If_SF_I}, for 
 $t \in [0,\infty)$ we define \begin{align}\label{eq:s_i_infinity}
        \begin{split}
            & s^{(\infty)}(t):=s_0 M_\offspring(-F_I(t))
            \\ & i^{(\infty)}(t):=e^{-\int_0^t\rho_u du} \left( i_0+ s_0\int_0^t M'_\offspring (-F_I(t)) e^{\int_0^u\rho_sds}\beta_u f_I(u)du \right).
        \end{split}
    \end{align}
We now state our main result for the SIR model. 

\begin{theorem} \label{thm:MAIN_SIR}
    Suppose that a sequence of random graphs $\{G_n\}_{n\in\N}$  and $\offspring\in\Pmc(\natzero)$ satisfy Assumption \ref{assu:graph_sequence}. Let $\sizeb$ be the size-biased version of $\offspring$, as defined in \eqref{eq:sizab}. Suppose that $s^{G_n}(0)\rightarrow s_0 \in (0,1)$ and let $s^{(\infty)}$ and $i^{(\infty)}$ be as defined in \eqref{eq:s_i_infinity}. Then, as  $n\rightarrow\infty $ we have

    \begin{align}
        \begin{split}
            & s^{G_n}(t)\xrightarrow{p}s^{(\infty)}(t),
        \\ &i^{G_n}(t) \xrightarrow{p} i^{(\infty)}(t),
        \end{split}
    \end{align}
    uniformly for $t\in[0,\infty)$.
\end{theorem}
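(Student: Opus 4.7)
My plan is to invoke the hydrodynamic-limit result of \cite{Ganguly2022hydrodynamic}, which guarantees that on any locally converging sparse graph sequence the empirical fractions $s^{G_n}(t)$ and $i^{G_n}(t)$ converge in probability to the root-marginal probabilities $\Pmb(X_\root(t)=S)$ and $\Pmb(X_\root(t)=I)$ for the SIR process run on the limiting UGW$(\offspring)$ tree $T$. The theorem then reduces to two subgoals: (i) showing that these root marginals admit the closed-form expressions \eqref{eq:s_i_infinity}, and (ii) upgrading the pointwise-in-$t$ convergence furnished by the hydrodynamic result to uniform convergence on $[0,\infty)$.

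\textbf{Closing the root dynamics.} To extract a finite-dimensional description from the infinite-tree dynamics, I would introduce auxiliary scalar functions $f_S(t),f_I(t)$, interpreted as the probabilities that a distinguished child $v$ of the root is in state $S$, respectively $I$, at time $t$, conditional on the edge $\{\root,v\}$ not having yet transmitted an infection to $\root$, together with the per-edge cumulative hazard $F_I(t):=\int_0^t \beta_u f_I(u)\,du$. Conditional on $d_\root=k$, the $k$ subtrees hanging off $\root$ are i.i.d.\ rooted size-biased Galton-Watson trees by the construction of UGW$(\offspring)$. The conditional independence asserted in Proposition \ref{prop:cond_ind_prop}, coupled with the symmetry of Proposition \ref{prop:symmetry}, then allows me to factor the probability that none of the $k$ edges has transmitted to $\root$ by time $t$ as $e^{-kF_I(t)}$, and averaging over $k\sim\offspring$ produces $s^{(\infty)}(t)=s_0\,M_\offspring(-F_I(t))$.

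\textbf{From closure to the ODE system.} To verify that the triple $(f_S,f_I,F_I)$ satisfies \eqref{eq:f_If_SF_I}, I would write Kolmogorov-type equations for the joint evolution of $(X_\root,X_v)$ along a single root-child edge and apply the projection identity of Proposition \ref{prop:filtering} (in the spirit of \cite[Lemma 9.1]{Ganguly2022nonmarkovian}) to reduce the a priori non-Markovian conditional evolution of $v$ to the reduced triple. The recovery term $-\rho f_I$ and the ``edge-loss'' term $-\beta f_I(1-f_I)$ contributing to $\dot f_I$ arise, respectively, from $v$ recovering and from the edge $\{\root,v\}$ transmitting to $\root$ (which removes $v$ from the conditioning event). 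The size-biased quotient $\sum_k k\sizeb(k)e^{-kF_I}/\sum_k \sizeb(k)e^{-kF_I}$ that appears on both right-hand sides is the conditional expected number of children of $v$ (other than $\root$) that have not yet infected $v$, and it is obtained by reapplying the same conditional independence one generation deeper. Uniqueness in Proposition \ref{prop:f_If_SF_I} then identifies this triple with the ODE solutions, and a Duhamel representation for the pure-death evolution of $X_\root$ on $\{I,R\}$, combined with the identity $-\dot s^{(\infty)}(t)=s_0\beta_t f_I(t)M'_\offspring(-F_I(t))$, produces the formula for $i^{(\infty)}(t)$ in \eqref{eq:s_i_infinity}.

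\textbf{Uniform convergence and main obstacle.} Uniformity on $[0,\infty)$ follows from pointwise convergence by combining the monotonicity of $s^{G_n}$ from Remark \ref{rmk:monotonicty} with an equicontinuity estimate, together with the exponential decay of $i^{(\infty)}(t)$ at large $t$ forced by Assumption \ref{assu:beta_rho}, which controls the tail uniformly in $n$. I expect the genuinely delicate step to be the closure itself: namely, showing that conditioning on the global, path-dependent event ``$\root$ has not been infected by time $t$'' preserves enough of the independence and of the size-biased structure of the neighbor-subtrees to produce the \emph{autonomous} system \eqref{eq:f_If_SF_I} rather than an unclosed infinite-dimensional filtering equation. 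This is precisely where Propositions \ref{prop:cond_ind_prop}, \ref{prop:symmetry} and \ref{prop:filtering} will need to do the heavy lifting.
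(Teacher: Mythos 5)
Your proposal follows essentially the same route as the paper: reduce to the root marginal on the UGW tree via the hydrodynamic limit of \cite{Ganguly2022hydrodynamic}, close the root--child dynamics using Propositions \ref{prop:filtering}, \ref{prop:cond_ind_prop} and \ref{prop:symmetry} to obtain the autonomous system \eqref{eq:f_If_SF_I}, and recover \eqref{eq:s_i_infinity} by solving the degree-conditioned Kolmogorov equations and averaging over $d_\root\sim\offspring$. The one small discrepancy is your definition of $f_S,f_I$: the paper conditions on the event $\{X_\root(t)=S,\ 1\in\Tmc\}$ (under which the subtree dynamics are genuinely autonomous and Proposition \ref{prop:cond_ind_prop} applies directly) rather than on the cavity-style event that the edge $\{\root,v\}$ has not yet transmitted, which by itself would not decouple $X_v$ from infections reaching $\root$ through other edges.
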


The proof of Theorem \ref{thm:MAIN_SIR} is given in Section \ref{sec:proof_transient}. It relies on a hydrodynamic limit result established in \cite[Corollary 4.7]{Ganguly2022nonmarkovian}, which shows that the fraction of individuals in any state $a\in\stateS$ in the SIR process on $G_n$ converges to $\P(X_\root^\Tmc(t)=a)$, where $X^\Tmc$ is the SIR process on $\Tmc=\text{UGW}(\offspring)$, and $\root$ is the root vertex.
We then show that the trajectories of $X^{\Tmc}$ satisfy a certain conditional independence property (Proposition \ref{prop:cond_ind_prop}). We combine this property with symmetry properties of the dynamics (see Proposition \ref{prop:symmetry}) to characterize  $\law(X_\root^\Tmc)$ in terms of a system of ODEs. In particular, 
for $a=S$ or $a=I$,  the probability $\P(X^\Tmc_\root(t)=a)$ is equal to $s^{(\infty)}(t)$ or  $i^{(\infty)}(t)$, respectively,  as defined in  \eqref{eq:s_i_infinity}.  As mentioned in the Introduction, Proposition \ref{prop:cond_ind_prop} can be seen as a substantial refinement 
in the case of the SIR process $X^{\Tmc}$ of a certain general Markov random field property that holds for more general interacting particle systems; see  \cite[Theorem 3.7]{Ganguly2022interacting}.

In Figure \ref{fig:ODEs}, we compare simulations of the evolution of the SIR process on certain \Erdos \ random graphs and random $3$-regular graphs of size $n=250$ with the theoretical prediction from Theorem \ref{thm:MAIN_SIR}. The plots illustrate that even in systems of moderate size, the theoretical prediction closely tracks the simulations.
\begin{figure}
     \centering
     \begin{subfigure}[b]{0.49\textwidth}
         \centering
         \includegraphics[width=\textwidth]{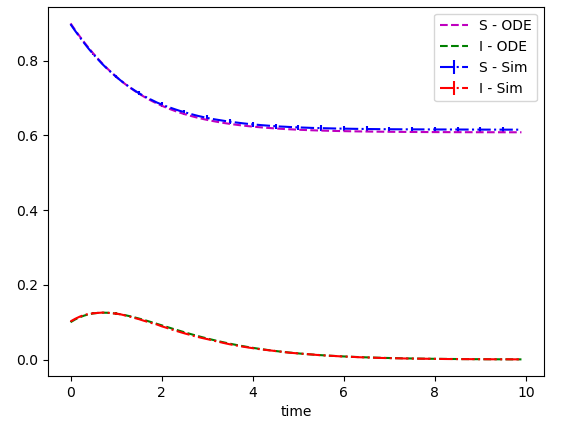}
         \caption{${\beta} \equiv 1$,   $\text{ER}(n,2/n)$.}
        
     \end{subfigure}
     \hfill
     \begin{subfigure}[b]{0.49\textwidth}
         \centering
         \includegraphics[width=\textwidth]{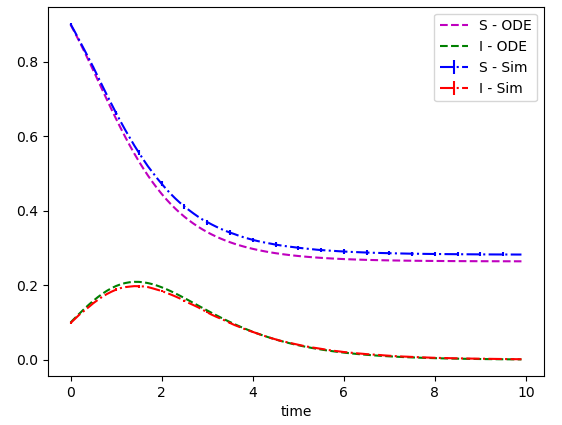}
         \caption{${\beta} \equiv 0.5$,   $\text{ER}(n,5/n)$.}
        
     \end{subfigure}
     \hfill
     \begin{subfigure}[b]{0.49\textwidth}
         \centering
         \includegraphics[width=\textwidth]{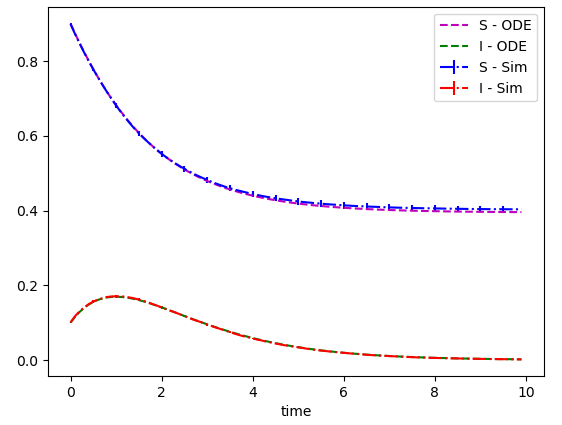}
         \caption{${\beta}\equiv1$,  $\text{CM}_n(\delta_3)$.}
     \end{subfigure}
     \hfill
     \begin{subfigure}[b]{0.49\textwidth}
         \centering
         \includegraphics[width=\textwidth]{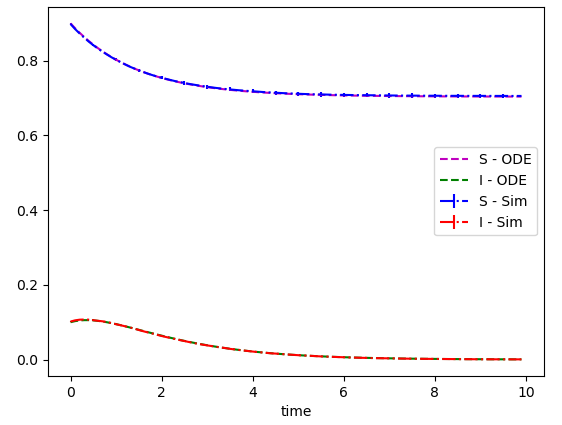}
         \caption{ ${\beta}\equiv0.5$,  $\text{CM}_n(\delta_3)$.}
     \end{subfigure}
        \caption{Time evolution of the fraction of susceptible ($S$) and infected ($I$) individuals on a finite graph ($n=250$), with constant $\beta$, obtained through simulations (Sim), along with the asymptotically exact values (ODE) given by Theorem \ref{thm:MAIN_SIR}. The simulations are obtained through $500$ iterations, resampling the random graphs at each iteration, and are plotted with $95\%$ confidence intervals.}
        \label{fig:ODEs}
\end{figure}  

\begin{remark}\label{remark:iid} 
    For simplicity, we restrict our attention to i.i.d. initial conditions, though the techniques in our proofs extend to more general initial conditions, as long as they satisfy certain symmetry properties between the laws of the initial states and that of the random graphs, and satisfy the Markov random field property mentioned above.  In the case where the limit tree $\Tmc$ is the $\kappa$-regular tree $T_\kappa$, the symmetry conditions correspond to the law of $X^{T_\kappa}(0)$ being isomorphism invariant, see \cite[Remark 3.16]{Lacker2021marginal}. 
\end{remark}

\begin{remark}\label{remark:broader_class_G}
    We also mention that, while Theorem \ref{thm:MAIN_SIR} is stated for (sparse) ER and CM graphs, our techniques extend to a broader class of graphs, namely to any graph sequence $\{G_n\}_{n\in\N}$  that converges \textit{locally weakly in probability} to a \text{UGW} tree. All results in this paper hold if we replace Assumption \ref{assu:graph_sequence} with the assumption that for some $\offspring\in\Pmc(\natzero)$ with finite third moment and a $\text{UGW}(\offspring)$ tree  $\Tmc$, 
$$ \frac{1}{n}\sum_{v\in G_v} \one_{\{B^{G_n}_r(v) \simeq H\} } \xrightarrow[n\rightarrow\infty]{p}\P(B^\Tmc_r(\root) \simeq H))$$
for every $r\in\natzero$ and every rooted graph $H$, where $\simeq$ denotes graph isomorphism, and  $B_r^G(v)$ is a ball of radius $r$ around $v\in G$, that is, the subgraph induced by all vertices in $G$ that are at most $r$ edges away from $v$.
\end{remark}

As mentioned in the Introduction, in the special case when the infection and recovery rates $\beta$ and $\rho$ are constant in time and $G_n$ is the configuration model, an 
ODE approximation similar to \eqref{eq:f_If_SF_I}  was proposed in \cite{Volz2008SIR, Volz2009epidemic} and shown to be asymptotically 
exact in \cite{Decreusefond2012large,janson2015law}.    However, Theorem \ref{thm:MAIN_SIR} applies to the more general setting of time-varying rates, which is very relevant for applications, e.g., \cite{chen2020time,hong2020estimation,london1973recurrent,dushoff2004dynamical}, and more general graph classes (see Remark \ref{remark:broader_class_G}). Further, an advantage of our approach is that it allows for several important generalizations, including non-exponential recovery times, as elaborated upon in Remark \ref{rmk:generalize_times} below, the SEIR model, presented in Section \ref{sec:SEIR}, and further extensions, discussed in Remark \ref{rmk:general_process} below. 

\begin{remark}\label{rmk:generalize_times}
     A large part of the literature on the SIR process focuses on the case where recovery times are exponential random variables, that is, each individual recovers at some rate $\rho$ regardless of how long they have been infected, and the methods exploit this Markovian structure. If recovery times are not exponential, the resulting SIR dynamics are not Markov, and this makes their analysis significantly more challenging. In contrast, the local convergence tools that we used in the proof of Theorem \ref{thm:MAIN_SIR} can still be used in this setting. Specifically, the hydrodynamic result in \cite{Ganguly2022hydrodynamic} is still valid and shows that the fraction of individuals in each of the SIR states on a finite locally-tree like graph can be approximated by the root particle dynamics of the non-Markovian SIR process on the infinite tree. Further, a version of the conditional independence property of Proposition \ref{prop:cond_ind_prop} can be established, the marginal root dynamics can be characterized as a piecewise deterministic Markov process, and its law characterized as the solution to a certain PDE. 
A complete analysis is deferred to future work.
\end{remark}

\subsection{SEIR Model}\label{sec:SEIR}
 In this section, we extend our limit results to the  Susceptible-Exposed-Infected-Recovered (SEIR) process.
The SEIR process is a model of epidemics in which each individual can be in one of four possible states: in addition to the three states $S,\ I, \ R$, of the SIR model, an individual can also be in the exposed state $E$, when it has contracted the disease but is not yet able to infect its neighbors.  

We define $\stateSS:=\{S,E,I,R\}$.
As in the case of the SIR model, the SEIR model on a (possibly random) graph $G$ can be modelled as a locally interacting Markov chain. We denote this process by $\Xbar^G$. 
The SEIR process is governed by the graph $G$ and three functions  $\beta,\rho,\lambda :[0,\infty)\rightarrow(0,\infty)$,  with $\beta$ and $\rho$, as for the SIR model, representing the infection and recovery rates, and $\lambda$ now representing the time-dependent rate at which an individual transitions from having been exposed to being infectious. We assume that the initial states are i.i.d. with $\P(\Xbar_v^G(0)=S)=s_0,$ $  \P(\Xbar_v^G(0)=E)=e_0$ and $\P(\Xbar_v^G(0)=I)=i_0$ for some $s_0\in(0,1)$ and $e_0,i_0\in[0,1]$ such that $s_0+e_0+i_0=1$.
At time $t$, an individual $v$ jumps from $S$ to $E$ at the rate $\beta_t \infects(\Xbar^G_{\partial_v}(t-))=\beta_t \sum_{w\in\partial_v}\one_{\{\Xbar^G_{w}(t-)=I\}}$, from $E$ to $I$ at the rate $\lambda_t$, and from $I$ to $R$ at the rate $\rho_t$. No other jumps are possible. Equipping $\stateSS$ with the ordering $S<E<I<R$, the SEIR process is non-decreasing in the same sense as Remark \ref{rmk:monotonicty}.

Throughout the rest of the paper, we make the following assumption.

\begin{assumption}\label{assu:beta_lambda_rho} The functions $\beta$, $\lambda$ and $\rho$ are continuous and there exist constants $c_1,c_2\in(0,\infty)$ such that  
    \begin{equation}
        c_1< \liminf_{t\rightarrow\infty} \min(\beta_t,\ \rho_t,\ \lambda_t)<\limsup_{t\rightarrow\infty} \max(\beta_t,\ \rho_t,\ \lambda_t)< c_2.
    \end{equation} 
\end{assumption}

\subsubsection{Asymptotic Characterization of SEIR dynamics}\label{sec:SEIR-statement_dynamics}
Given a finite graph $G$, we let
\begin{align}
    \begin{split}
        \sbar^{G}(t)&:= \frac{1}{|G|}\sum_{v\in G} \one_{\{\Xbar_v^{G}(t)=S\}},
        \\ \ebar^{G}(t)&:= \frac{1}{|G}\sum_{v\in G} \one_{\{\Xbar_v^{G}(t)=E\}},
        \\ \ibar^{G}(t) &:= \frac{1}{|G|}\sum_{v\in G} \one_{\{\Xbar_v^{G}(t)=I\}}. 
    \end{split}
\end{align}
We start by establishing the existence and uniqueness of the solution to a certain system of ordinary differential equations that we use in our main result. 
\begin{proposition}\label{prop:gggG}
    Suppose that $\offspring\in\Pmc(\natzero)$ has a finite third moment and let $s_0\in(0,1)$ and $e_0,i_0\in[0,1]$ satisfy $s_0+e_0+i_0=1$. Then there exists a unique solution $(g_S,\ g_E,\  g_I, \ G_I)$ to the following system of ODEs:
 \begin{align} \label{eq:gggG-Odes}
        \begin{cases}
             \dot g_S =  {\beta} g_S g_I\left(1- \frac{\sum_{k=0}^\infty k \sizeb(k)e^{- k G_I}}{\sum_{j=0}^\infty \sizeb(j)e^{- j G_I}}\right),
            \\ \dot g_E = {\beta} g_S g_I \frac{\sum_{k=0}^\infty k \sizeb(k)e^{- k G_I}}{\sum_{j=0}^\infty \sizeb(j)e^{- j G_I}} -g_E({\lambda} -{\beta} g_I),
            \\ \dot g_I= {\lambda} g_E - g_I({\rho}+{\beta}-{\beta} g_I),
            \\ \dot G_I= \beta g_I,
        \end{cases}
    \end{align}
with initial conditions
    \begin{align} \label{eq:gggG-Odes_initial}
        \begin{cases}
            G_I(0)=0,
            \\ g_m(0) =  s_0\one_{\{m=S\}} + e_0\one_{\{m=E\}} + i_0\one_{\{m=I\}}, & m\in\stateSS.
        \end{cases}
    \end{align}

\end{proposition}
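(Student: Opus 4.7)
The strategy is to mirror the proof of Proposition~\ref{prop:f_If_SF_I} given in Appendix~\ref{sec:odes-well-posed}: establish local existence and uniqueness via the Picard--Lindel\"of theorem, then produce a priori bounds that extend the solution to all of $[0,\infty)$.

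The plan is first to verify that the right-hand side of \eqref{eq:gggG-Odes} is locally Lipschitz in $(g_S, g_E, g_I, G_I)$ on any region where $G_I$ is bounded. The only non-polynomial ingredient is the ratio
\begin{equation*}
R(G_I) \coloneqq \frac{\sum_{k=0}^\infty k \sizeb(k) e^{-k G_I}}{\sum_{j=0}^\infty \sizeb(j) e^{-j G_I}} = -\frac{M'_{\sizeb}(-G_I)}{M_{\sizeb}(-G_I)}.
\end{equation*}
On $\{0 \le G_I \le M\}$ the denominator is bounded below by the strictly positive quantity $M_{\sizeb}(-M)$, while the numerator and its $G_I$-derivative involve the series $\sum_k k \sizeb(k) e^{-k G_I}$ and $\sum_k k^2 \sizeb(k) e^{-k G_I}$. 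Via \eqref{eq:sizab}, the dominating constants $\sum_k k \sizeb(k)$ and $\sum_k k^2 \sizeb(k)$ correspond to the second and third moments of $\offspring$, so the finite third moment hypothesis validates termwise differentiation and yields $R \in C^1([0,\infty))$. Picard--Lindel\"of then delivers a unique local solution from the specified initial data.

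Next, I would produce a priori bounds that preclude finite-time blow-up. Setting $u \coloneqq g_S + g_E + g_I$ and summing the three ODEs, a direct cancellation yields
\begin{equation*}
\dot u = \beta_t\, g_I\, (u - 1) - \rho_t\, g_I.
\end{equation*}
Writing $v \coloneqq 1 - u$, this becomes $\dot v = \beta_t g_I v + \rho_t g_I$; since $v(0) = s_0 + e_0 + i_0 - 1 \cdot (-1) = 0$ and the right-hand side is nonnegative at $v = 0$ whenever $g_I \ge 0$, a tangency argument gives $v \ge 0$, i.e., $u \le 1$. For non-negativity, note $\dot g_S$ is proportional to $g_S$ so $g_S \ge 0$ is preserved by an integrating-factor argument; at $g_E = 0$ one has $\dot g_E = \beta_t g_S g_I R(G_I) \ge 0$, and at $g_I = 0$ one has $\dot g_I = \lambda_t g_E \ge 0$, so $(g_S, g_E, g_I) \in [0,1]^3$ throughout. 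Finally, $\dot G_I = \beta_t g_I \le \beta_t$ and Assumption~\ref{assu:beta_lambda_rho} together with continuity bounds $\beta$ on $[0,\infty)$, so $G_I$ grows at most linearly. The local solution therefore extends to all of $[0,\infty)$.

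The main technical subtlety is the coupled boundary behavior: the bound $u \le 1$ relies on $g_I \ge 0$, which in turn relies on $g_E \ge 0$, which in turn relies on $g_S, g_I \ge 0$ through the $R(G_I)$ factor; one must close these inequalities simultaneously on a common time interval via a single tangency/barrier argument rather than sequentially. Once this is organized, everything else is routine, and the local-to-global extension completes the proof.
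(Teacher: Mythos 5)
Your proposal is correct and follows essentially the same route as the paper, which proves Proposition~\ref{prop:gggG} via the outline at the end of Appendix~\ref{sec:odes-well-posed}: local existence, a priori confinement of $(g_S,g_E,g_I,G_I)$ to $[0,1]^3\times[0,\infty)$ (your summation identity $\dot u=\beta g_I(u-1)-\rho g_I$ and the boundary sign checks on the nonnegative orthant are exactly the intended argument), and Lipschitz continuity of the ratio term via the finite third moment of $\offspring$ (Lemma~\ref{lemm:phi_lip}) to secure uniqueness and global extension. Two cosmetic slips to fix: with the paper's convention that $M'_{\sizeb}(-z)$ denotes the derivative of $M_{\sizeb}$ evaluated at $-z$, your ratio is $R(G_I)=+M'_{\sizeb}(-G_I)/M_{\sizeb}(-G_I)=\Phi_{\sizeb}(G_I)$ with no minus sign, and the computation of $v(0)$ should simply read $v(0)=1-(s_0+e_0+i_0)=0$.
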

The proof of Proposition \ref{prop:gggG} is similar to that of Proposition \ref{prop:f_If_SF_I}. A brief outline is given at the end of Appendix \ref{sec:odes-well-posed}.

 Given $g_S,\ g_E,\ g_I,\ G_I$ as in Proposition \ref{prop:gggG} and $M_\offspring$ as in \eqref{eq:mgf},  define \begin{align}\label{eq:sbar_infty}
        \begin{split} 
            & \sbar^{(\infty)}(t):=s_0 M_\offspring(-G_I(t)),
            \\& \ebar^{(\infty)}(t):= e^{-\int_0^t \lambda_udu}\left(e_0+ s_0\int_0^t M'_\offspring(-G_I(u)) G_I'(u) e^{\int_0^u \lambda_\tau d\tau }du\right),
            \\ & \ibar^{(\infty)}(t):= e^{-\int_0^t\rho_udu}\left( i_0 + \int_0^t\lambda_ue^{\int_0^u (\rho_s-\lambda_s)ds}\left(e_0+s_0\int_0^uM'_\offspring(-G_I(\tau)) G_I'(\tau)) e^{\int_0^\tau \lambda_sds}d\tau \right)du\right).
        \end{split} 
    \end{align}
We can now state our characterization of the large $n$ dynamics of the SEIR process.

\begin{theorem} \label{thm:SEIR_main}
    Suppose that the sequence of random graphs $\{G_n\}_{n\in\N}$ and $\offspring\in\P(\natzero)$ satisfy Assumption \ref{assu:graph_sequence}.  Let $\sizeb$ be the size-biased version of $\offspring$, as defined in \eqref{eq:sizab}, suppose $\sbar^{G_n}(0)\rightarrow s_0,$ $\ebar^{G_n}(0)\rightarrow e_0$, and $\ibar^{G_n}(0)\rightarrow i_0,$ with $s_0\in(0,1)$ and $s_0+e_0+i_0=1$, and set $\sbar^{(\infty)},$  $\ebar^{(\infty)}$ and $\ibar^{(\infty)}$ be as defined in \eqref{eq:sbar_infty}.  Then, as $n\rightarrow\infty $,
    \begin{align*}
        \begin{split}
             \sbar^{G_n}(t) \xrightarrow{p}\sbar^{(\infty)}(t),
       \qquad \ebar^{G_n}(t) \xrightarrow{p}\ebar^{(\infty)}(t), \qquad \ibar^{G_n}(t) \xrightarrow{p} \ibar^{(\infty)}(t),
        \end{split}
    \end{align*}
    uniformly   for $t\in [0,\infty)$.
\end{theorem}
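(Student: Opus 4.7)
The plan is to follow the strategy used for Theorem \ref{thm:MAIN_SIR}, extending each step to the four-state SEIR dynamics. First, I would invoke the hydrodynamic limit result of \cite{Ganguly2022hydrodynamic}: since SEIR is a locally interacting Markov chain with bounded rates (by Assumption \ref{assu:beta_lambda_rho}) on a sequence of graphs converging locally weakly in probability to $\Tmc := \text{UGW}(\offspring)$, this gives
\[ \frac{1}{|G_n|} \sum_{v \in G_n} \one_{\{\Xbar_v^{G_n}(t) = m\}} \xrightarrow{p} \P(\Xbar_\root^\Tmc(t) = m), \qquad m \in \stateSS, \]
uniformly on compact time intervals. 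Uniform convergence on all of $[0,\infty)$ would then be upgraded using the monotonic structure of SEIR (under the ordering $S<E<I<R$) together with the fact that Assumption \ref{assu:beta_lambda_rho} forces the epidemic to terminate at an exponential rate. It thus suffices to compute the marginals $\P(\Xbar_\root^\Tmc(t) = m)$ and verify that they coincide with $\sbar^{(\infty)}(t), \ebar^{(\infty)}(t), \ibar^{(\infty)}(t)$.

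The second step would be to prove for SEIR an analogue of Proposition \ref{prop:cond_ind_prop}: conditional on the trajectory $\{\Xbar_\root^\Tmc(u)\}_{u \in [0,t]}$, the trajectories on the disjoint subtrees rooted at distinct children of $\root$ are mutually independent. The proof should parallel the SIR argument, since subtrees interact only via the root; a Markov random field-type property together with the projection lemma (Proposition \ref{prop:filtering}) should extend, with the extra $E$ state treated as a ``silent'' intermediate whose $S \to E$ and $E \to I$ transitions do not alter the rates of neighboring vertices. Combined with the symmetry of the UGW construction (identified in Proposition \ref{prop:symmetry} for SIR) --- conditional on $d_\root$, the children's subtrees are i.i.d.\ Galton-Watson trees with offspring distribution $\sizeb$ --- this yields the factorization that drives the rest of the argument.

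For the third step, I would derive the ODE system \eqref{eq:gggG-Odes} by a root-on-tree computation. For a fixed child $v$ of $\root$, define
\[ g_m(t) := \P\bigl(\Xbar_v^\Tmc(t) = m \,\big|\, \Xbar_\root^\Tmc(u) = S \ \forall u \in [0,t]\bigr), \qquad m \in \{S,E,I\}, \]
and $G_I(t) := \int_0^t \beta_u g_I(u)\, du$. Applying the conditional independence property twice --- at the root level to factor over the root's children, and at each child to factor over its own children --- gives
\[ \P\bigl(\Xbar_\root^\Tmc(u) = S \ \forall u \in [0,t]\bigr) = s_0 \sum_{k=0}^\infty \offspring(k) e^{-k G_I(t)} = s_0 M_\offspring(-G_I(t)), \]
matching $\sbar^{(\infty)}$. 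The Kolmogorov forward equation for the conditional law of $\Xbar_v^\Tmc(t)$ then yields \eqref{eq:gggG-Odes}, in which the ratio $\sum_k k \sizeb(k) e^{-k G_I}/\sum_j \sizeb(j) e^{-j G_I}$ appears as the conditional mean number of infectious grandchildren of $v$ --- size-biased, since conditioning on none of them having infected $v$ selects away from subtrees with many infectious vertices. Solving the resulting linear ODEs for $\ebar^{(\infty)}$ and $\ibar^{(\infty)}$ by variation of parameters then recovers the closed-form expressions in \eqref{eq:sbar_infty}.

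The main obstacle will be a careful accounting of the bias terms $-\beta g_I$ in the $g_E$ and $g_I$ equations. These arise because conditioning on the root being susceptible throughout $[0,t]$ rules out $v$ ever having been in state $I$ on $[0,t]$, which skews the conditional law of $\Xbar_v^\Tmc(t)$ away from the unconditional SEIR dynamics. The presence of the hidden $E$ state makes this subtler than in the SIR case: one must distinguish between the time $v$ transitions $S \to E$ (which does not affect the root's susceptibility) and the time $v$ transitions $E \to I$ (which could violate the conditioning). I expect this to be handled by a projection-type identity in the spirit of Proposition \ref{prop:filtering}, rewriting the conditional $S \to E$ and $E \to I$ rates as functionals of $(g_S, g_E, g_I, G_I)$, with the $\beta g_I$ corrections emerging from the Bayes bias induced by the conditioning event.
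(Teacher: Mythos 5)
Your proposal follows essentially the same route as the paper: the hydrodynamic limit of \cite{Ganguly2022hydrodynamic} reduces the problem to the root marginal on $\Tmc$, the conditional independence property (Proposition \ref{prop:cond_ind_prop}, which the paper proves for the whole interpolated S(E)IR family, so SEIR is covered) plus the symmetry of Proposition \ref{prop:symmetry} yield the ODE system \eqref{eq:gggG-Odes} for the pair $(\root,1)$, and the linear ODEs for $Q_{S,\cdot;k}$ are solved and averaged over $\offspring$ to produce \eqref{eq:sbar_infty}. This is the paper's argument in outline.

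Two local claims in your write-up are, however, false as stated and worth correcting, since one of them would derail the derivation if taken literally. First, conditioning on $\Xbar_\root^\Tmc$ being susceptible on $[0,t]$ does \emph{not} ``rule out $v$ ever having been in state $I$ on $[0,t]$'': an infectious child transmits to the root at finite rate $\beta$, so the root can survive while $v$ is infectious, and indeed $g_I(t)=\P(\Xbar_v^\Tmc(t)=I\mid \Xbar_\root^\Tmc(t)=S,\ v\in\Tmc)>0$ is a central quantity of the whole scheme. The correct mechanism (which you also name) is a Bayes reweighting: the survival of the root over $[t,t+h]$ has conditional probability $1-h(\beta_t g_I(t)+\Bbar_\root(t))+o(h)$, and dividing by this denominator (Lemma \ref{lem:cond-identity-lemma}) is exactly what produces the $+\beta g_I g_E$ and $+\beta g_I^2$ corrections, while the extra $-\beta g_I$ in the $g_I$ equation records the loss of mass to the event that $v$ infects the root. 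Second, the $E\to I$ transition \emph{does} alter the rates of neighbouring susceptible vertices (each gains an infectious neighbour); what the factorization in Lemma \ref{lemm:density_factors} actually uses is that a vertex held at $S$ has no jumps and that its infection rate is additive over neighbours, so the Radon--Nikodym density splits across the separating set --- the ``silent'' feature of $E$ is only that an exposed vertex does not infect, which is why the SIR argument carries over verbatim to the interpolated process. (Also, the tilted ratio $\sum_k k\sizeb(k)e^{-kG_I}/\sum_j\sizeb(j)e^{-jG_I}$ is the conditional mean number of \emph{children} of $v$ given $v$ is still susceptible, not of its grandchildren.) With these points repaired, your plan matches the paper's proof.
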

The proof of Theorem \ref{thm:SEIR_main} is given in Section \ref{sec:SEIR-main-proof}, and follows a similar approach as for the SIR model, although the details are more involved. 
% and partly relegated to Appendix \ref{sec:proof-SEIR}.

In Figure \ref{fig:SEIR} we compare our asymptotically exact approximation to values of $\sbar^{G_n}, \ \ebar^{G_n}$ and $\ibar^{G_n}$ for an \Erdos \  graph obtained by Monte Carlo simulations ($500$ iterations, plotted with $95\%$ confidence intervals). Once again, our approximation closely tracks the simulation results, even for relatively small $n$. 

\begin{figure}
     \centering
     \begin{subfigure}[b]{0.49\textwidth}
         \centering
         \includegraphics[width=\textwidth]{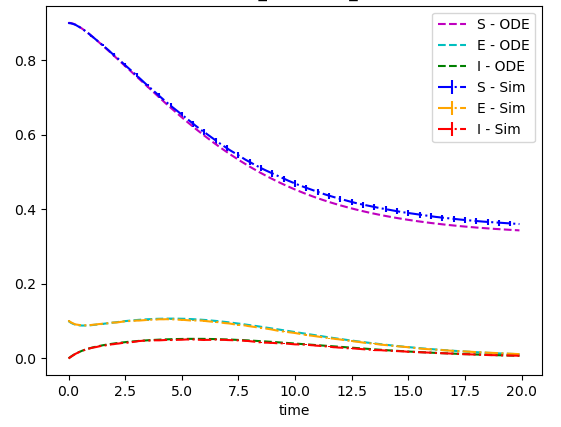}
         \caption{$n=200$, $\lambda\equiv 0.5$.}
        
     \end{subfigure}
     \begin{subfigure}[b]{0.49\textwidth}
         \centering
    \includegraphics[width=\textwidth]{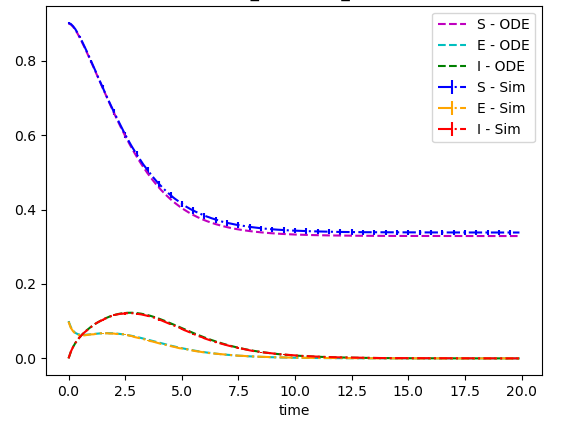}
         \caption{$n=200 $, $\lambda \equiv2$.}
     \end{subfigure}
     \centering
     \begin{subfigure}[b]{0.49\textwidth}
         \centering
         \includegraphics[width=\textwidth]{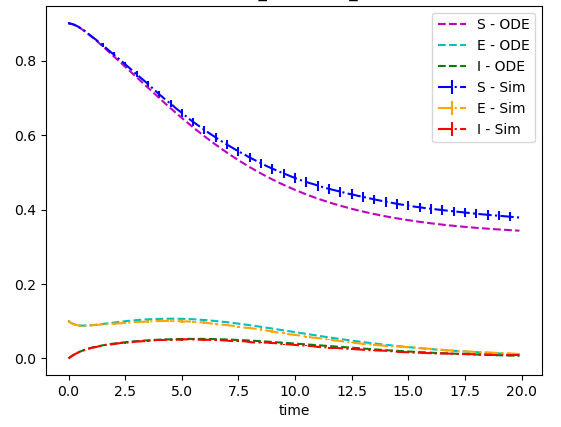}
         \caption{$n=100$, $\lambda \equiv 0.5$.}
        
     \end{subfigure}
     \begin{subfigure}[b]{0.49\textwidth}
         \centering
    \includegraphics[width=\textwidth]{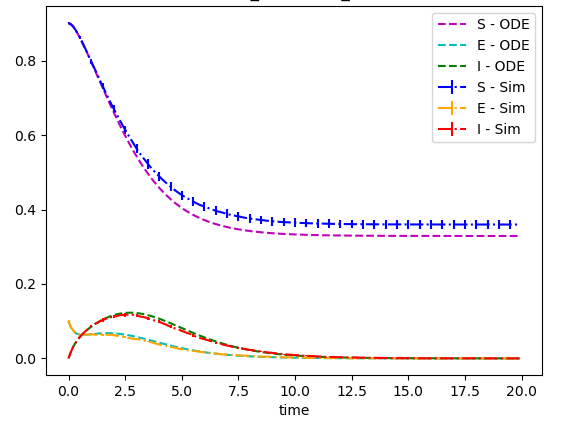}
         \caption{$n=100 $, $\lambda \equiv2$.}
     \end{subfigure}
        \caption{Time evolution of the fraction of susceptible (S), exposed (E) and infected (I) individuals on an $\text{ER}(n,3/n)$ graph, with  $\beta \equiv \rho \equiv  1$. We compare the asymptotically exact values (ODE) given by Theorem \ref{thm:SEIR_main} with the fraction obtained through Monte Carlo simulations (Sim). Simulations are obtained through $500$ iterations, and are shows with $95\%$ confidence intervals.}
        \label{fig:SEIR}
\end{figure}  

\begin{remark} \label{rmk:general_process}
The result in Theorem \ref{thm:SEIR_main} can be further extended to more general compartmental models that are widely used in the epidemiology literature in order to account for different viral strains and treatment options, for example, see \cite{duchamps2023general,foutel2022from,he2020SEIR,mwalili2020SEIR,hyman1999differential}. These allow for a susceptible state $S$ and   $m\in\N$  post-infection states $\{I_{1}$, $I_2$, … $I_m\}$. Supposing that each individual's transitions among post-infection states do not depend on the states of its neighbors, under Assumption \ref{assu:graph_sequence} and continuity assumptions analogous to Assumption \ref{assu:beta_lambda_rho}, the hydrodynamic result in \cite{Ganguly2022hydrodynamic} holds. If in addition one assumes that no transitions from post-infection states to state $S$ are possible, a version of the independence property of Proposition \ref{prop:cond_ind_prop} can be established, thus leading to a result analogous to Theorem \ref{thm:SEIR_main}. We defer a full account of this general setting to future work. 
\end{remark}

\section{Results on Outbreak Size}

 \label{sec:outbreak}
An important quantity of interest in the study of epidemic dynamics is the outbreak size, which is the fraction of individuals ever infected, in the interval $[0,\infty)$. By the monotonicity of the SIR and SEIR processes (Remark \ref{rmk:monotonicty}), the outbreak size is equal to  $1$ minus the limit, as $t\rightarrow\infty$, of the fraction of susceptible individuals at time $t$. In Section \ref{sec:SIR-outbreak} and Section \ref{sec:outbreak-SEIR}, we characterize the large-time behavior for the SIR and SEIR processes respectively, as the size of the population approaches infinity. In Section \ref{sec:mf-compare} we compare our asymptotically exact estimate of the outbreak size with a mean-field approximation for the special case of the SIR process on random regular graphs with constant infection and recovery rates.
\subsection{Outbreak Size for SIR Model}
\label{sec:SIR-outbreak} 
 Given a sequence of graphs $\{G_n\}_{n\in\N}$ satisfying Assumption \ref{assu:graph_sequence}, we let $s^{G_n}(\infty):=\lim_{t\rightarrow\infty}s^{G_n}(t)$ for $n\in\N$. We compute the limit of this quantity as $n\rightarrow\infty$, by first showing that $\lim_{n\rightarrow\infty}s^{G_n}(\infty)=\lim_{t\rightarrow\infty}s^{(\infty)}(t)$, where $s^{(\infty)}$, given in \eqref{eq:s_i_infinity}, is the hydrodynamic limit of the fraction of susceptible individuals, by Theorem \ref{thm:MAIN_SIR}. We recall that $M_\nu$ denotes the moment generating function of $\nu\in\Pmc(\natzero)$.

\begin{theorem}\label{thm:SIR-outbreak}
        Let $\{G_n\}_{n\in\N}$ and $\offspring\in\Pmc(\natzero)$ satisfy Assumption \ref{assu:graph_sequence}. Let $\sizeb$ be the size-biased version of $\offspring$, as defined in \eqref{eq:sizab}.  Then, assuming that $\lim_{n\rightarrow\infty}s^{G_n}(0)=s_0\in(0,1)$,
        \begin{equation*}
\lim_{n\rightarrow\infty}\lim_{t\rightarrow\infty}s^{G_n}(t)=\lim_{t\rightarrow\infty}s^{(\infty)}(t)= s_0M_\offspring\left(-\int_0^\infty \beta_u f_I(u)du\right),
        \end{equation*}
        where $f_I$ is defined by \eqref{eq:f_If_SF_I}-\eqref{eq:ffF-initial}. 
        Moreover, $\Finfty:= \int_0^\infty \beta_u f_I(u)du$ is finite and satisfies
    \begin{equation}\label{eq:Finfty_time_vary}
        \Finfty+\log(M_{\sizeb}(-\Finfty))-\log\left(1 -e^{\Finfty}\int_0^\infty e^{-\int_0^u \beta_\tau f_I(\tau) d\tau} \rho_u f_I(u) du\right) +\log(s_0)=0,
    \end{equation}
   Furthermore, if there exists $r\in(0,\infty)$ such that $\rho_t/\beta_t=r$ for all $t\in[0,\infty)$, then 
    equation \eqref{eq:Finfty_time_vary} 
    is equivalent to
    \begin{equation}\label{eq:Finfty_const}
        \Finfty+\log(M_{\sizeb}(-\Finfty))-\log(1+r(1-e^{\Finfty}) )+\log(s_0)=0,
    \end{equation}
    which has a unique strictly positive solution $\Fmc$.
        \end{theorem}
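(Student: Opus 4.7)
The plan is to extract all conclusions from the limiting ODE system of Proposition~\ref{prop:f_If_SF_I}, using Theorem~\ref{thm:MAIN_SIR} only once, at the very start, to transfer the outcome to the graph sequence. Since $s^{G_n}(t)$ is monotone non-increasing in $t$ by Remark~\ref{rmk:monotonicty} (so $s^{G_n}(\infty)$ exists) and Theorem~\ref{thm:MAIN_SIR} gives $\sup_{t\ge 0}|s^{G_n}(t)-s^{(\infty)}(t)|\xrightarrow{p}0$, the elementary inequality $|s^{G_n}(\infty)-s^{(\infty)}(\infty)|\le \sup_{t\ge 0}|s^{G_n}(t)-s^{(\infty)}(t)|$ yields $s^{G_n}(\infty)\xrightarrow{p} s^{(\infty)}(\infty)$. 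Using $s^{(\infty)}(t) = s_0 M_\offspring(-F_I(t))$ and continuity of $M_\offspring$, the first claim therefore reduces to showing $\Finfty := \lim_{t\to\infty}F_I(t)$ is finite.

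The algebraic core of the argument is a conservation law for \eqref{eq:f_If_SF_I}. Dividing the first equation by the third produces
\[
\frac{d\log f_S}{dF_I} \;=\; 1 + \frac{d}{dF_I}\log M_{\sizeb}(-F_I),
\]
which integrates (using $f_S(0)=s_0$, $F_I(0)=0$, $M_{\sizeb}(0)=1$) to the identity
\[
f_S(t) \;=\; s_0\, e^{F_I(t)}\, M_{\sizeb}(-F_I(t)), \qquad t\ge 0.
\]
Set $f_R := 1 - f_S - f_I$; summing the first two ODEs gives $\dot f_R = \rho_t f_I + f_R\dot F_I$, and multiplying by $e^{-F_I}$ yields $\tfrac{d}{dt}(f_R e^{-F_I}) = \rho_t f_I e^{-F_I}$. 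Since $0\le f_R\le 1$ and $f_R(0)=0$, integrating over $[0,\infty)$ gives $\int_0^\infty \rho_t f_I(t)\,dt \le 1$; combined with Assumption~\ref{assu:beta_rho} (which with the continuity of $\rho,\beta$ forces $\rho$ bounded below and $\beta$ bounded above on all of $[0,\infty)$), this delivers $\Finfty<\infty$ and $f_I\in L^1(0,\infty)$. A standard argument (bounded $\dot f_I$ together with integrability of $f_I$) then forces $f_I(t)\to 0$, so $f_S(\infty)+f_R(\infty)=1$. Substituting the two conservation identities at $t=\infty$ gives
\[
s_0 e^{\Finfty} M_{\sizeb}(-\Finfty) + e^{\Finfty}\!\int_0^\infty e^{-F_I(u)}\rho_u f_I(u)\,du \;=\; 1,
\]
which after rearrangement and taking logarithms becomes \eqref{eq:Finfty_time_vary}. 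In the special case $\rho_t=r\beta_t$, the integral collapses via $\rho_u f_I(u) = r\dot F_I(u)$ to $r(1-e^{-\Finfty})$, immediately yielding \eqref{eq:Finfty_const}.

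For existence and uniqueness of a positive solution to \eqref{eq:Finfty_const}, define
\[
h(x) := x + \log M_{\sizeb}(-x) - \log\bigl(1 + r(1-e^x)\bigr) + \log s_0
\]
on $[0,x^*)$ with $x^* := \log(1+1/r)$. Then $h(0)=\log s_0<0$ and $h(x)\to +\infty$ as $x\uparrow x^*$, so a zero in $(0,x^*)$ exists by the intermediate value theorem. For uniqueness I would show $h$ is strictly convex on $[0,x^*)$: the second derivative of $x\mapsto \log M_{\sizeb}(-x)$ equals the variance of the exponentially tilted law $k\mapsto \sizeb(k)e^{-kx}/M_{\sizeb}(-x)$, hence is non-negative, while a direct computation gives $(-\log(1+r(1-e^x)))'' = r(1+r)e^x/(1+r(1-e^x))^2 > 0$; so $h''>0$ throughout. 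A strictly convex function whose values at one endpoint and the other are negative and $+\infty$ respectively admits exactly one zero in the interior.

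The main obstacle is spotting the conservation law $f_S = s_0 e^{F_I}M_{\sizeb}(-F_I)$; once it is in hand, deriving \eqref{eq:Finfty_time_vary} and its specialization \eqref{eq:Finfty_const} is routine bookkeeping on bounded non-negative ODE solutions, and uniqueness of the positive solution collapses to an elementary convexity argument.
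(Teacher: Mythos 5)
Your proposal is correct and its algebraic core coincides with the paper's: your conservation law $f_S(t)=s_0e^{F_I(t)}M_{\sizeb}(-F_I(t))$ is exactly the paper's \eqref{eq:yF} (the paper gets it by changing the independent variable to $F=F_I(t)$ and solving $y'=y(1-\Phi)$; dividing the first ODE of \eqref{eq:f_If_SF_I} by the third is the same computation), your identity for $f_Re^{-F_I}$ integrates to the paper's \eqref{eq:solve-x}, the passage to $t=\infty$ via $f_I(\infty)=0$ is the same, and your convexity-plus-IVT argument on $[0,\log(1+1/r))$ is precisely how the paper proves uniqueness for $\Psi_r$. There are two genuine differences. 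First, you make the interchange $\lim_n\lim_t=\lim_t\lim_n$ explicit via monotonicity and the uniform-in-$t$ convergence of Theorem \ref{thm:MAIN_SIR}; the paper leaves this implicit. Second, and more substantively, you derive $\Finfty<\infty$ from the direct monotone bound $\int_0^\infty\rho_tf_I(t)\,dt\le\lim_{t\to\infty}f_R(t)\le1$ combined with the boundedness of $\beta_t/\rho_t$, whereas the paper argues by contradiction: assuming $F_I(\infty)=\infty$, it shows the function $x(F)$ in \eqref{eq:solve-x} tends to $-\infty$, contradicting $f_I\ge0$. Your route is shorter and avoids discussing whether $f_I$ ever vanishes.

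One slip needs repair. The bound $\int_0^\infty\rho_tf_I(t)\,dt\le1$ does \emph{not} follow from integrating $\tfrac{d}{dt}(f_Re^{-F_I})=\rho_tf_Ie^{-F_I}$; that identity only yields $\int_0^\infty\rho_tf_I(t)e^{-F_I(t)}\,dt\le1$, and discarding the factor $e^{-F_I}$ presupposes $\Finfty<\infty$ — the very thing being proved — so as written the step is circular. The fix is one line: integrate the unweighted identity $\dot f_R=\rho_tf_I+f_R\dot F_I\ge\rho_tf_I$ (the dropped term is non-negative since $f_R\ge0$ and $\dot F_I\ge0$) to get $\int_0^T\rho_tf_I\,dt\le f_R(T)\le1$ for all $T$. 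Keep the $e^{-F_I}$-weighted identity for the final substitution, where it correctly gives $f_R(\infty)=e^{\Finfty}\int_0^\infty e^{-F_I(u)}\rho_uf_I(u)\,du$ and hence \eqref{eq:Finfty_time_vary}. With that one correction the proof is complete.
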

        The proof of Theorem \ref{thm:SIR-outbreak} is given in Section \ref{sec:outbreak}.
 \begin{remark}
 When the ratio $\rho_t/\beta_t$ is constant in time, the final outbreak size depends on $\rho$ and $\beta$ only through their ratio. This is well known when $\beta$ and $\rho$ are both constant, and in that case it is common in the SIR literature to fix $\rho\equiv1$ with no loss of generality, by re-scaling time. Theorem \ref{thm:SIR-outbreak} shows that, when the ratio  $\rho/\beta$ is not constant, the ratio no longer determines the outbreak size, and instead  the time evolution of both $\beta$ and $\rho$ influence the outbreak size.
Figure \ref{fig:ratios} illustrates this phenomenon. It plots $s^{(\infty)}(t)$, defined in \eqref{eq:s_i_infinity}, which by Theorem \ref{thm:MAIN_SIR} is the  large-$n$ asymptotic fraction of susceptible individuals, for two SIR processes with the same ratio $r_t=\rho_t/\beta_t$ for all $t\geq 0$, though different $\beta$ and $\rho$, which lead to dramatically different outbreak sizes.
 \end{remark}

\begin{figure}
    \floatbox[{\capbeside\thisfloatsetup{capbesideposition={right,center},capbesidewidth=0.5\textwidth}}]{figure}[\FBwidth]
{\caption{Large-$n$ limit of the fraction of susceptible individuals (i.e.,  $s^{(\infty)}(t)$ from \eqref{eq:s_i_infinity}) for the SIR process on a uniform $3$-regular graph,  as a function of time. For either curves, the ratio $\rho_t/\beta_t$ is equal to $1.5+sin(\pi t)$. In one case, $\rho_t$ increases linearly from $0.5$ to $1.5$ for $t\in[0,10]$ and it then stays constant. In the other, $\rho_t$  decreases linearly from $1.5$ to $0.5$ for $t\in[0,10]$, and it then stays constant.}
        \label{fig:ratios}}
{\includegraphics[width=0.5\textwidth]{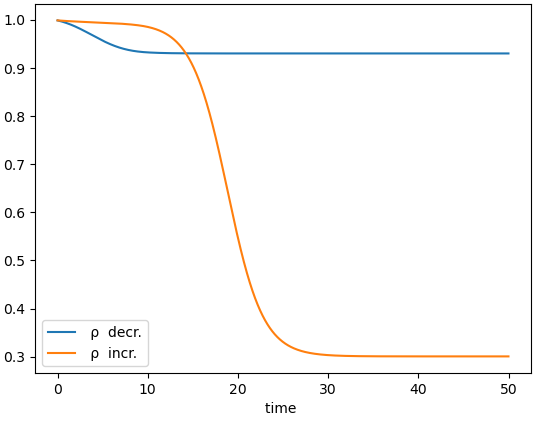}}\
\end{figure}

 Next, for each time-dependent $\beta$ and $\rho$ we identify
  constant infection and recovery rates that lead to the same outbreak size. These effective rates are unique only up to multiplication by the same constant, and so we identify them by their ratio.
 For given $\beta,\ \rho$, we define $\Psi_{\beta,\rho}:[0,\infty)\rightarrow[0,\infty]$ as 
 \begin{equation}\label{eq:Psi_beta_rho}
     \Psi_{\beta,\rho}(z)=z+\log(M_{\sizeb}(-z))-\log\left(1 -e^{z}\int_0^\infty e^{-\int_0^u \beta_\tau f_I(\tau) d\tau} \rho_u f_I(u) du\right) +\log(s_0),
 \end{equation}
 where we set $\log(x)=-\infty$  for any $x\leq 0$, and
where $f_I$ is defined by  \eqref{eq:f_If_SF_I}-\eqref{eq:ffF-initial} for some fixed $s_0\in(0,1)$ with the given $\beta$ and $\rho$. For $r\in(0,\infty)$, we also define 
$\Psi_{r}:[0,\infty)\rightarrow[0,\infty]$ as 
 \begin{equation}\label{eq:Psi_r}
     \Psi_{r}(z)=z+\log(M_{\sizeb}(-z))-\log\left(1 +r(1-e^z)\right) +\log(s_0).
 \end{equation}
 The following result shows that for every pair of rate functions $\beta$ and $\rho$ satisfying Assumption \ref{assu:beta_rho}, there exists a constant $\rhat:=\rhat(\beta,\rho)$ so that the outbreak size of an SIR process with rates $\beta$ and $\rho$, and that of a SIR process with constant infection rate $1$ and constant recovery $\rhat$ are the same (as $n\rightarrow\infty$). In particular, we observe that this is not achieved by naively replacing $\rho$ and $\beta$ with their respective average (over time) values, nor by taking $\rhat$ to be the (time) average of $\rho_t/\beta_t$.
\begin{lemma} \label{prop:effective_rate}
Let $\offspring\in\Pmc(\natzero)$ have a finite third moment, and suppose that $\rho,\ \beta$ satisfy Assumption \ref{assu:beta_rho}. Let $\Finfty_{\beta,\rho}:=\int_0^\infty \beta_t f_I(t)$, where $f_I$ is defined by \eqref{eq:f_If_SF_I}-\eqref{eq:ffF-initial}.  Then there exists a unique $\rhat\in(0,\infty)$ such that $\Psi_{\rhat}(\Finfty_{\beta,\rho})=0$, namely 
\begin{equation} \label{eq:rbar}
    \rhat=\frac{\int_0^\infty e^{-\int_0^t\beta_uf_I(u)du} \rho_t f_I(t) dt  }{ 1-e^{-\int_0^\infty \beta_u f_I(u)du} }.
\end{equation}
\end{lemma}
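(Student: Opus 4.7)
The plan is to match equation \eqref{eq:Finfty_time_vary} from Theorem \ref{thm:SIR-outbreak} with the equation $\Psi_{\rhat}(\Finfty_{\beta,\rho}) = 0$ term by term, and then solve for $\rhat$ algebraically. Writing $F := \Finfty_{\beta,\rho}$ and $I_{\beta,\rho} := \int_0^\infty e^{-\int_0^u \beta_\tau f_I(\tau)\, d\tau}\, \rho_u f_I(u)\, du$, Theorem \ref{thm:SIR-outbreak} reads
\[
F + \log M_{\sizeb}(-F) + \log s_0 = \log\bigl(1 - e^F I_{\beta,\rho}\bigr),
\]
while the definition \eqref{eq:Psi_r} of $\Psi_r$ gives that $\Psi_{\rhat}(F) = 0$ is equivalent to
\[
F + \log M_{\sizeb}(-F) + \log s_0 = \log\bigl(1 + \rhat(1-e^F)\bigr).
\]
Equating the right-hand sides and solving yields $\rhat(e^F - 1) = e^F I_{\beta,\rho}$, which is exactly \eqref{eq:rbar} after dividing through by $e^F - 1$ and $e^F$.

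I would then verify that this $\rhat$ lies in $(0,\infty)$. By Assumption \ref{assu:beta_rho} the rates $\beta$ and $\rho$ are uniformly bounded away from zero; since $f_I(0) = 1 - s_0 > 0$ and $f_I \equiv 0$ is an equilibrium of the ODE system \eqref{eq:f_If_SF_I}, uniqueness of solutions gives $f_I(t) > 0$ for every $t \in [0,\infty)$. Hence both $F > 0$ and $I_{\beta,\rho} > 0$, so the numerator of \eqref{eq:rbar} is positive and the denominator $1 - e^{-F}$ lies in $(0,1)$.

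Uniqueness of $\rhat$ follows from the strict monotonicity of the map $r \mapsto \Psi_r(F)$ on the effective domain $(0,(e^F-1)^{-1})$: a direct differentiation yields
\[
\frac{\partial}{\partial r}\Psi_r(F) = \frac{e^F - 1}{1 + r(1-e^F)} > 0,
\]
and one checks that $\Psi_0(F) = \log(1 - e^F I_{\beta,\rho}) < 0$, while $\Psi_r(F) \to +\infty$ as $r \uparrow (e^F-1)^{-1}$; for $r \geq (e^F-1)^{-1}$ the convention $\log x = -\infty$ for $x \leq 0$ forces $\Psi_r(F) = +\infty$, so no extra zeros appear. The only mildly non-routine step is confirming $F > 0$, which relies on the ODE structure in \eqref{eq:f_If_SF_I}; everything else is an algebraic rearrangement of the two characterizations.
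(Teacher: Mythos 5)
Your proposal is correct and follows essentially the same route as the paper: both arguments obtain \eqref{eq:rbar} by matching $\Psi_{\rhat}(\Finfty_{\beta,\rho})=0$ against \eqref{eq:Finfty_time_vary} from Theorem \ref{thm:SIR-outbreak} (you solve for $\rhat$, the paper substitutes the candidate and verifies, which is the same algebra), and both settle uniqueness by monotonicity of $r\mapsto\Psi_r(\Finfty_{\beta,\rho})$ on its effective domain. The only detail you leave implicit is the finiteness of the numerator $\int_0^\infty e^{-\int_0^t\beta_u f_I(u)du}\rho_t f_I(t)\,dt$, which the paper checks via the bounds $c_1,c_2$ of Assumption \ref{assu:beta_rho}, but which also follows from the finiteness of the logarithm in \eqref{eq:Finfty_time_vary}, so this is not a gap.
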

 \begin{proof} 
We start by showing that $\rhat\in(0,\infty)$. We know that $\Finfty_{\beta,\rho}= \int_0^\infty\beta_t f_I(t)dt<\infty$ by Theorem \ref{thm:SIR-outbreak}. By Assumption \ref{assu:beta_rho} and \eqref{eq:f_If_SF_I}, $t\rightarrow \beta_t f_I(t)$ is continuous, non-negative, and bounded away from zero near $t=0$, and so $\Finfty_{\beta,\rho}>0$. Letting $c_1,c_2\in(0,\infty)$ be constants such that \eqref{eq:assumption_bdd} holds, we have 
\begin{align*}
    \begin{split}
        \int_0^\infty e^{-\int_0^t\beta_uf_I(u)du} \rho_t f_I(t) dt  = \int_0^\infty e^{-\int_0^t\beta_uf_I(u)du} \frac{\rho_t}{\beta_t}\beta_t f_I(t) dt  \leq \frac{c_2}{c_1}(1-e^{-\Finfty_{\beta,\rho}})<\infty.
    \end{split}
\end{align*}
Similarly, note that $\int_0^\infty e^{-\int_0^t\beta_uf_I(u)du} \rho_t f_I(t) dt > c_1 c_2^{-1}(1-e^{-\Finfty_{\beta,\rho}})>0$.
Setting $\rhat$ as in \eqref{eq:rbar}, by \eqref{eq:Psi_r}, we have \begin{equation*}
    \Psi_{\rhat}(z)= z+\log(M_{\sizeb}(-z))-\log\left(1 + \frac{\int_0^\infty e^{-\int_0^t\beta_uf_I(u)du} \rho_t f_I(t) dt  }{ 1-e^{-\Finfty_{\beta,\rho}} }(1-e^z)\right) +\log(s_0).
\end{equation*}
Evaluating this at $z=\Finfty_{\beta,\rho}$ using \eqref{eq:Psi_beta_rho}, and observing  that $(1-e^z)/(1-e^{-z})=-e^{z}$, we have
\begin{align*}
    \begin{split}
      \Psi_{\rhat}(\Finfty_{\beta,\rho}) &= \Finfty_{\beta,\rho}+\log(M_{\sizeb}(-\Finfty_{\beta,\rho}))-\log\left(1 - e^{\Finfty_{\beta,\rho}}\int_0^\infty e^{-\int_0^t\beta_uf_I(u)du} \rho_t f_I(t) dt \right) +\log(s_0)\\&= \Psi_{\beta,\rho}(\Finfty_{\beta,\rho}),  
    \end{split}
\end{align*}
which is zero by Theorem \ref{thm:SIR-outbreak}. This shows the  existence of $\rhat\in(0,\infty)$ such that $\Psi_{\rhat}(\F_{\beta,\rho})=0$. 

For uniqueness, observe that for each $z\in(0,\infty)$ the map  $r\rightarrow\Psi_r(z)$ is non-decreasing, and strictly increasing on $\set{ r \ : \ \Psi_r(z)<\infty}$. Let $\Fmc_r$ be the unique zero of $\Psi_r$. It follows that $\Fmc_r$ is strictly decreasing in $r$ and therefore there is a one-to-one correspondence between $r$ and $\Fmc_r$.  
\end{proof}

We conclude this section with a brief discussion of periodic parameters. For simplicity, we fix $\rho\equiv 1$ and we consider periodic infection rates that could model, for instance, seasonality effects of the infectivity of a pathogen.   For a given amplitude $A\in [0,1)$, period $\omega >0$ and $\delta\in[0,1]$ we set $\beta_t=1+A\sin((t+\delta \omega) 2\pi/\omega)$. Here, $\delta$ is a parameter controlling the phase of the periodic rate at time zero. Note that if the period length is large enough compared to the average infection rate and recovery rate, the outbreak dies out before the full length of the period, and so, while the average of $\beta_t$ over the period is $1$, the average infection rate during the time the epidemic is “active” (i.e, there are individuals in state $I$) will be close to $\beta_0$. Because of this, we expect $\delta$ to have a greater impact on the outbreak size when $\omega$ is large. This is borne out by Figure \ref{fig:periods}, which plots the outbreak size as a function of $A$ for various $\delta$ and $\omega$. We see that in every case other than large $\omega$ and small $\delta$, the outbreak size is decreasing in $A$. This suggests the following conjecture, which we leave for future investigation.

\begin{figure}
     \centering
     \begin{subfigure}[b]{0.49\textwidth}
         \centering
         \includegraphics[width=\textwidth]{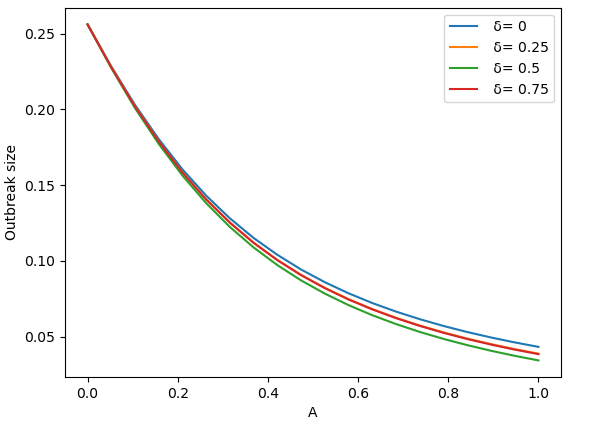}
        \caption{ $\omega=0.5$.}
        
     \end{subfigure}
     \hfill
     \begin{subfigure}[b]{0.49\textwidth}
         \centering
         \includegraphics[width=\textwidth]{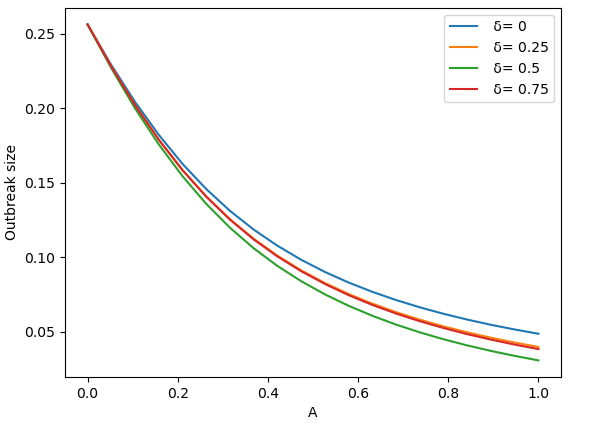}
         \caption{ $\omega=1$.}
        
     \end{subfigure}
     \hfill
     \begin{subfigure}[b]{0.49\textwidth}
         \centering
         \includegraphics[width=\textwidth]{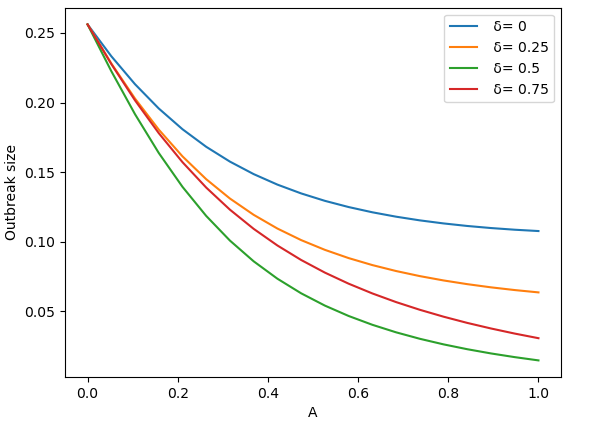}
       \caption{ $\omega=5$.}
     \end{subfigure}
     \hfill
     \begin{subfigure}[b]{0.49\textwidth}
         \centering
         \includegraphics[width=\textwidth]{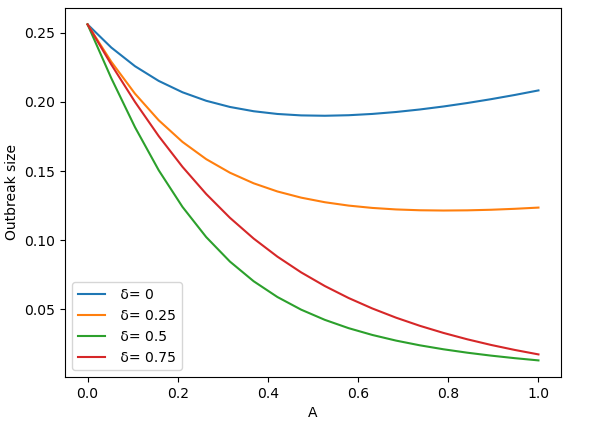}
         \caption{ $\omega=10$.}
     \end{subfigure}
        \caption{Large $n$ limit of the final size of a SIR outbreak on a $3$-regular graph with periodic infection rate, as a function of the amplitude $A$, obtained numerically from $s^{(\infty)}(t)$ as in Theorem \ref{thm:MAIN_SIR}. In all cases $\rho\equiv 1$. }
        \label{fig:periods}
\end{figure}  
\begin{conjecture} Let $A\in [0,1),\ \omega>0$ and $\delta\in[0,1]$. Define  $\beta_t=1+A\sin((t+\delta \omega) 2\pi/\omega)$. There exists $\omegabar>0$ such that, for all $\omega<\omegabar$, the asymptotic outbreak size $1-s^{(\infty)}(\infty)$ is decreasing in $A$.
\end{conjecture}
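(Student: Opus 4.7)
The plan is to reduce the conjecture to a monotonicity property of the effective recovery rate $\rhat$ defined in Lemma \ref{prop:effective_rate}, and then to analyze $\rhat$ in the rapid-oscillation limit $\omega\to 0$.

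By Theorem \ref{thm:MAIN_SIR}, Theorem \ref{thm:SIR-outbreak} and Lemma \ref{prop:effective_rate}, the asymptotic outbreak size equals $1 - s_0 M_\offspring(-\Fmc_{\rhat(A,\omega,\delta)})$, where $\Fmc_r$ is strictly decreasing in $r$ and $M_\offspring$ is strictly increasing on $(-\infty,0]$. Hence the outbreak size is strictly decreasing in $A$ if and only if $A\mapsto \rhat(A,\omega,\delta)$ is strictly increasing, so it suffices to establish the latter property for $\omega$ sufficiently small. Substituting $u=F_I(t)$ (so that $du=\beta_t f_I(t)\,dt$) in \eqref{eq:rbar} with $\rho\equiv 1$ yields the compact representation
\begin{equation}\label{eq:conj-rhat}
\rhat(A,\omega,\delta) \;=\; \frac{1}{1-e^{-F_I(\infty)}}\int_0^{F_I(\infty)} \frac{e^{-u}}{\beta_{t(u)}}\,du,
\end{equation}
where $t(\cdot)=F_I^{-1}(\cdot)$.

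The key step is to recognize \eqref{eq:conj-rhat} as a rapidly oscillating integral. By classical averaging theory for ODEs with periodic forcing of zero mean, the solution of \eqref{eq:f_If_SF_I}--\eqref{eq:ffF-initial} converges uniformly on $[0,\infty)$ to the solution with $\beta\equiv 1$ (the time-mean of $\beta_t$); in particular $F_I(\infty)\to \Fmc_1>0$. In \eqref{eq:conj-rhat} the integrand $e^{-u}/\beta_{t(u)}$ then has $u$-period $\omega\,\dot F_I(t(u))\to 0$, and its $t$-average is the standard trigonometric integral
\begin{equation*}
\left\langle \frac{1}{\beta} \right\rangle \;=\; \frac{1}{2\pi}\int_0^{2\pi}\frac{d\vartheta}{1+A\sin\vartheta}\;=\;\frac{1}{\sqrt{1-A^2}}.
\end{equation*}
Hence $\rhat(A,\omega,\delta)\to (1-A^2)^{-1/2}$ as $\omega\to 0$. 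Since this limit is strictly increasing in $A$ on $[0,1)$, the heart of the conjecture is captured. To turn this heuristic into a proof one must upgrade pointwise convergence to uniform $C^1$ convergence in $A$: differentiating the ODE system with respect to $A$ produces a linearised system driven by the same periodic perturbation, to which the same averaging principle should apply and yield $\partial_A\rhat(A,\omega,\delta)\to A(1-A^2)^{-3/2}$ uniformly on $[0,1-\varepsilon]\times[0,1]$ for every $\varepsilon>0$. Strict positivity of the limit then delivers strict monotonicity of $\rhat$ in $A$ on $[0,1-\varepsilon]$ for $\omega<\omegabar(\varepsilon)$.

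The main technical obstacle is the boundary regime $A\uparrow 1$, where $\beta_t$ vanishes on part of its period, $1/\beta_t\notin L^\infty$, and the standard error estimates in averaging theorems degrade; the uniform $C^1$ bound thus cannot be extended to all of $[0,1)$ without additional work. One expects to patch this regime by exploiting that $\rhat$ grows large there (so the outbreak is nearly absent) and combining this with the monotonicity already proven on $[0,1-\varepsilon]$. Producing a single $\omegabar$ that works uniformly across all $A\in[0,1)$, as the conjecture demands, is the crux of the difficulty and likely requires a refined quantitative averaging argument localised to a shrinking neighbourhood of the zeros of $\beta_t$; alternatively one may need to weaken the conjecture to compact sub-intervals of $[0,1)$.
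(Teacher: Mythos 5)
First, note that the statement you are proving is stated in the paper as a \emph{conjecture} and is explicitly left ``for future investigation'': the paper contains no proof, only the numerical evidence of Figure \ref{fig:periods}. So there is no argument of the authors' to compare against; the question is only whether your sketch could be completed. Your overall strategy --- reduce to monotonicity of $\rhat$ from Lemma \ref{prop:effective_rate} via the strict monotonicity of $r\mapsto\Fmc_r$, then analyze $\rhat$ by averaging as $\omega\to 0$ --- is a sensible reduction, and the representation \eqref{eq:conj-rhat} is a correct substitution. You are also candid that the uniform-in-$A$ $C^1$ averaging and the boundary regime $A\uparrow 1$ are unresolved, so even on its own terms the proposal is a program rather than a proof.

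However, there is a concrete error that breaks the central mechanism before those acknowledged gaps are reached. The period average you compute, $\langle 1/\beta\rangle = (1-A^2)^{-1/2}$, is the average of $1/\beta_{t}$ with respect to $dt$, but in \eqref{eq:conj-rhat} the integration variable is $u=F_I(t)$ with $du=\beta_t f_I(t)\,dt$. Over one fast period the correct local average of the integrand's oscillating factor is therefore the \emph{weighted} average
\begin{equation*}
\frac{\int_t^{t+\omega} \beta_s^{-1}\,\beta_s f_I(s)\,ds}{\int_t^{t+\omega} \beta_s f_I(s)\,ds}\;\approx\;\frac{\langle 1\rangle}{\langle\beta\rangle}\;=\;1,
\end{equation*}
not $\langle 1/\beta\rangle$. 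Equivalently, undoing the substitution gives $\int_0^\infty e^{-F_I(t)}f_I(t)\,dt$, which contains no factor of $1/\beta$ at all; first-order averaging replaces $\beta$ by its mean $1$ throughout \eqref{eq:f_If_SF_I} (the right-hand sides are linear in $\beta$), so $F_I\to F_I^{0}$ with $\dot F_I^{0}=f_I^{0}$, the numerator of \eqref{eq:rbar} tends to $1-e^{-F_I^{0}(\infty)}$, and $\rhat(A,\omega,\delta)\to 1$ \emph{independently of $A$}. Consequently the limit you would obtain is degenerate: $\partial_A\rhat\to 0$, and the sign of $\partial_A$ of the outbreak size for small $\omega$ is governed by the second-order (in $\omega$) correction to the averaged dynamics, not by the leading term. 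Your claimed limit $(1-A^2)^{-1/2}$, whose strict monotonicity in $A$ was supposed to carry the whole argument, is an artifact of averaging against the wrong measure. A correct proof along these lines would have to compute the next-order averaging correction and show its $A$-derivative has a definite sign uniformly in $\delta$ and in $A$, which is a substantially harder (and still open) problem.
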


\subsection{Outbreak Size for the SEIR Model}\label{sec:outbreak-SEIR}

 We now turn to the characterization of the outbreak size of an SEIR process.  Recall the definition of  $M_\zeta$ for $\zeta\in\Pmc(\natzero)$ given in \eqref{eq:mgf}.

\begin{theorem}\label{thm:SEIR-outbreak}
    Let $\{G_n\}_{n\in\N}$ and $\offspring$ satisfy Assumption \ref{assu:graph_sequence}. Then

    \begin{equation*}
\lim_{n\rightarrow\infty}\lim_{t\rightarrow\infty}\sbar^{G_n}(t)=\lim_{t\rightarrow\infty}\sbar^{(\infty)}(t)= s_0M_\offspring\left(-\int_0^\infty \beta_u g_I(u)du\right)
    \end{equation*}
    where $\int_0^\infty \beta_u g_I(u)du=:\Finfty$ satisfies
    \begin{equation}\label{eq:SEIR-Finfty_time_vary}
        \Finfty+\log(M_{\sizeb}(-\Finfty))-\log\left(1 -e^{\Finfty}\int_0^\infty e^{-\int_0^u \beta_\tau g_I(\tau) d\tau} \rho_u g_I(u) du\right) +\log(s_0)=0,
    \end{equation}
    for $g_I$ as in \eqref{eq:gggG-Odes}.  
    Furthermore, if there exists $r\in(0,\infty)$ such that  $\rho_t/\beta_t,=r$ for all $t\in[0,\infty)$, 
    equation \eqref{eq:SEIR-Finfty_time_vary} 
    is equivalent to 
    \begin{equation}\label{eq:SEIR-Finfty}
        \Psi_r(\Fmc)=0,
    \end{equation}
    where $\Psi_r$ is defined in \eqref{eq:Psi_r}.
\end{theorem}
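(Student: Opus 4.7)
The plan is to mirror the strategy used for the SIR case in Theorem~\ref{thm:SIR-outbreak}, with the combination $g_S+g_E+g_I$ taking over the role that $f_S+f_I$ plays in the SIR analysis. First I would justify the interchange $\lim_{n\to\infty}\lim_{t\to\infty}\sbar^{G_n}(t)=\lim_{t\to\infty}\sbar^{(\infty)}(t)$ by combining the uniform-in-$t$ convergence from Theorem~\ref{thm:SEIR_main} with the monotonicity of $\sbar^{G_n}(\cdot)$ recorded in Remark~\ref{rmk:monotonicty}. By \eqref{eq:sbar_infty}, this common value equals $s_0 M_\offspring(-\Finfty)$, where $\Finfty:=\lim_{t\to\infty}G_I(t)\in[0,\infty]$ exists because $\dot G_I=\beta g_I\ge 0$.

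The key technical step is to show that $\Finfty<\infty$ and that $g_E(t),g_I(t)\to 0$ as $t\to\infty$. Setting $h:=g_S+g_E+g_I$, direct summation of the ODEs in \eqref{eq:gggG-Odes} yields
\begin{equation*}
\dot h(t)=-g_I(t)\bigl(\beta_t(1-h(t))+\rho_t\bigr),
\end{equation*}
so $h$ is non-increasing and stays in $[0,1]$; in particular $c_1\int_0^\infty g_I(u)\,du\le h(0)-h(\infty)\le 1$, which combined with Assumption~\ref{assu:beta_lambda_rho} gives $\Finfty\le c_2/c_1<\infty$. A Barbalat-type argument, using that $g_I$ and $g_E$ are Lipschitz (their derivatives are uniformly bounded by Proposition~\ref{prop:gggG} together with Assumption~\ref{assu:beta_lambda_rho}) and integrable, then forces $g_I(t),g_E(t)\to 0$. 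Multiplying the formula for $\dot h$ by the integrating factor $e^{-G_I}$ and using $\dot G_I=\beta g_I$ produces the clean identity
\begin{equation*}
\frac{d}{dt}\bigl[e^{-G_I(t)}h(t)\bigr]=-e^{-G_I(t)}(\beta_t+\rho_t)g_I(t),
\end{equation*}
which, upon integration on $[0,\infty)$ using $h(0)=1$ and $h(\infty)=g_S(\infty)$, gives $e^{-\Finfty}g_S(\infty)=1-\int_0^\infty e^{-G_I(u)}(\beta_u+\rho_u)g_I(u)\,du$.

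Separately, the $g_S$ equation in \eqref{eq:gggG-Odes} has exactly the same form as the $f_S$ equation in \eqref{eq:f_If_SF_I}, and the same logarithmic integration (exploiting that $\tfrac{d}{dt}\log M_{\sizeb}(-G_I)=-\mu\dot G_I$ with $\mu$ denoting the ratio in \eqref{eq:gggG-Odes}) yields $g_S(t)=s_0 e^{G_I(t)}M_{\sizeb}(-G_I(t))$, so that $g_S(\infty)=s_0 e^{\Finfty}M_{\sizeb}(-\Finfty)$. Substituting this in and using the elementary computation $\int_0^\infty e^{-G_I(u)}\beta_u g_I(u)\,du=1-e^{-\Finfty}$ to absorb the $\beta$-piece leaves
\begin{equation*}
s_0 e^{\Finfty}M_{\sizeb}(-\Finfty)=1-e^{\Finfty}\int_0^\infty e^{-G_I(u)}\rho_u g_I(u)\,du,
\end{equation*}
whose logarithm is \eqref{eq:SEIR-Finfty_time_vary}. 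In the constant-ratio case $\rho_t/\beta_t\equiv r$, the remaining integral collapses by the same change of variables to $r(1-e^{-\Finfty})$, reducing the equation to $\Psi_r(\Finfty)=0$ in the notation of \eqref{eq:Psi_r}. The main obstacle I anticipate is the terminal-behavior step: the SIR argument only required $f_I\to 0$, whereas here the additional compartment $g_E$ must also be shown to vanish so that $h(\infty)=g_S(\infty)$; once that is in hand, everything else is an algebraic adaptation of the SIR template, made possible by the fortunate observation that $\dot g_S$ in SEIR has the same form as $\dot f_S$ in SIR.
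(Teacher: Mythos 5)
Your proposal is correct in substance and rests on the same key observation as the paper's proof: summing the exposed and infected compartments eliminates $\lambda$ and collapses the SEIR system onto the SIR one. The paper executes this by changing variables to $G=G_I(t)$ and noting that $(\y,\bar\x)$ with $\bar\x=\x+\z$ satisfies exactly the initial value problem \eqref{eq:ODE-xyf} from the SIR case, so the entire proof of Theorem \ref{thm:SIR-outbreak} is reused verbatim; in particular, finiteness of $\Finfty$ is obtained there by contradiction (if $\Finfty=\infty$, the explicit solution formula forces $\bar\x(\f)\to-\infty$). You instead work in the original time variable with $h=g_S+g_E+g_I$ and the integrating factor $e^{-G_I}$, which reproduces the same algebraic identity, but you get $\Finfty<\infty$ more directly from the monotonicity of $h$ (giving $\int_0^\infty \rho_u g_I(u)\,du\le 1$) and then use a Barbalat-type argument for $g_I,g_E\to 0$. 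Your route is arguably cleaner on the finiteness step and makes explicit a point the paper glosses over, namely that $g_E$ (not just $g_I$) must vanish so that $h(\infty)=g_S(\infty)$. Two small items to tighten: the integrability of $g_E$ does not follow from the $h$ computation alone and should be derived, e.g.\ by integrating the $\dot g_I$ equation, $\int_0^T\lambda_u g_E(u)\,du=g_I(T)-g_I(0)+\int_0^T g_I(u)(\rho_u+\beta_u-\beta_u g_I(u))\,du$, whose right side stays bounded since $\int_0^\infty g_I<\infty$; and the constants $c_1,c_2$ in Assumption \ref{assu:beta_lambda_rho} only bound the rates asymptotically, so you should invoke continuity on compact initial intervals to get uniform bounds on all of $[0,\infty)$.
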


\begin{figure}
     \centering
     \begin{subfigure}[b]{0.49\textwidth}
   
         \centering
         \includegraphics[width=\textwidth]{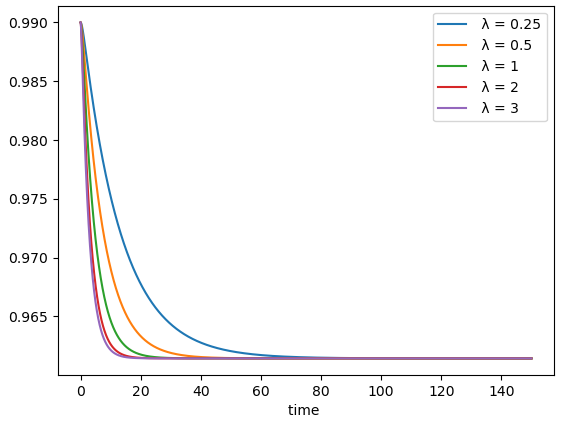}
         \caption{$\beta \equiv 0.5$, $\rho\equiv 1$.}
        
     \end{subfigure}
     \hfill
     \begin{subfigure}[b]{0.49\textwidth}
     
         \centering
         \includegraphics[width=\textwidth]{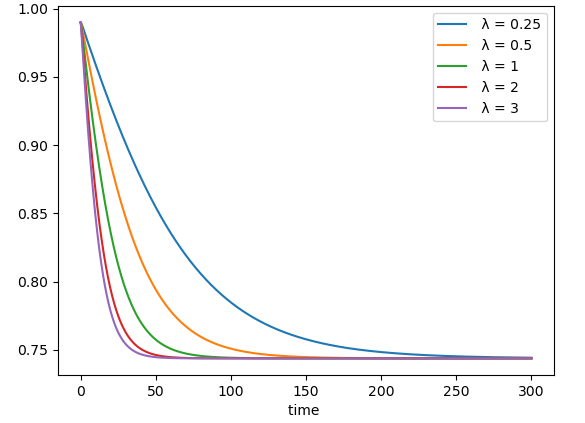}
         \caption{$\beta \equiv  1$,  $\rho\equiv 1$.}
     \end{subfigure}
      \begin{subfigure}[b]{0.49\textwidth}
         \centering
         \includegraphics[width=\textwidth]{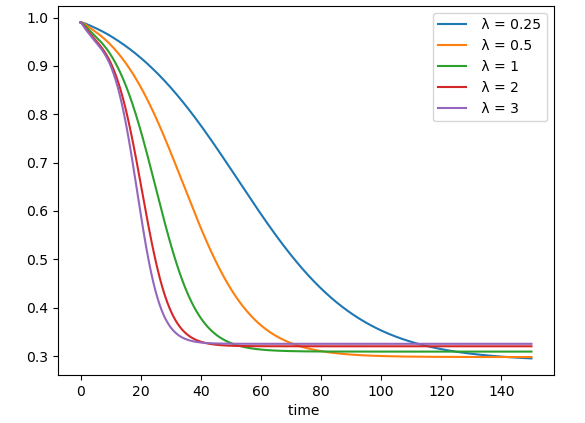}
         \caption{$\beta_t= 0.5+ 0.1\min(t,10)$, $\rho\equiv 1$.}
        
     \end{subfigure}
     \hfill
     \begin{subfigure}[b]{0.49\textwidth}
         \centering
         \includegraphics[width=\textwidth]{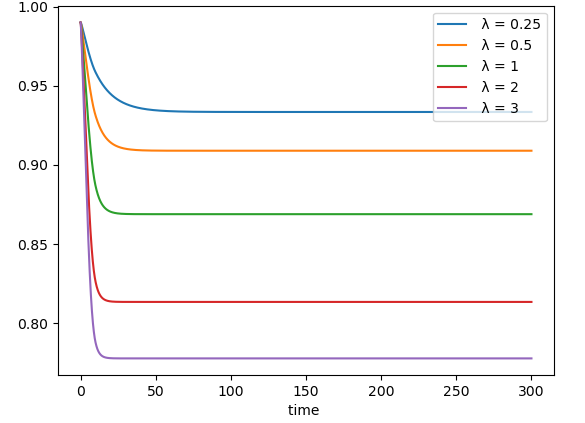}
         \caption{$\beta_t= 1.5- 0.1\max(t,10)$,  $\rho\equiv 1$.}
     \end{subfigure}
     
        \caption{Large $n$ limit fraction of susceptible individuals for SEIR processes ($\sbar^{(\infty)}(t)$ from \eqref{eq:sbar_infty}) with different values of $\lambda$. In all cases, the interaction graph is the  random $3$-regular graph, $e_0=0.01$ and $s_0=1-e_0$. In (a)  and (b) both $\beta$ and $\rho$ are constant. In (c) (resp. (d)) $\rho$ is held constant and $\beta$ grows linearly from $0.5$ to $1.5$ (resp. decreases linearly from $1.5$ to $0.5$) and is then constant.}
        \label{fig:SIER_oubtreaks}
\end{figure}  

Theorem \ref{thm:SEIR-outbreak} shows that when the ratio $t\mapsto \beta_t/\rho_t$ is constant, the final outbreak size does not depend on $\lambda$ and it coincides with the outbreak size of a SIR process with the same infection rate, recovery rate, and initial condition $s_0$. On the other hand, when the ratio is not constant, the rate $\lambda$ affects the outbreak size. Figure \ref{fig:SIER_oubtreaks} plots $\sbar^{(\infty)}(t)$, as defined in \eqref{eq:sbar_infty}, which by Theorem \ref{thm:SEIR_main} is the large $n$ limit of the  of the fraction of susceptible  individuals, for several SEIR processes on a random $3$-regular graph. For fixed constants $\beta,$ and $ \rho$, but different values of constant $\lambda$,   the time-dynamics can vary significantly, but the final fraction of susceptible individuals does not depend on $\lambda$. In contrast, when $\rho/\beta$ changes with time, the final fraction of susceptible individuals (and hence, the outbreak size) varies with $\lambda$.

In Figure \ref{fig:sEIr_vs_sIr}, we set all rates as constant, and we show that the time evolution of the sum of the fractions of infected and exposed individuals in the SEIR process can be markedly different from that of the fraction of infected individuals in an SIR process, despite the fact that the final outbreak sizes coincide.  
 We leave as a future research direction the problem of  understanding the impact of $\lambda$ on the SEIR dynamics for finite $t$ - for example, how does $\lambda$ impact the maximum number of individuals that have ever been infected in  any given time period?

\begin{figure}
     \centering
     \begin{subfigure}[b]{0.49\textwidth}
     
         \centering
         \includegraphics[width=\textwidth]{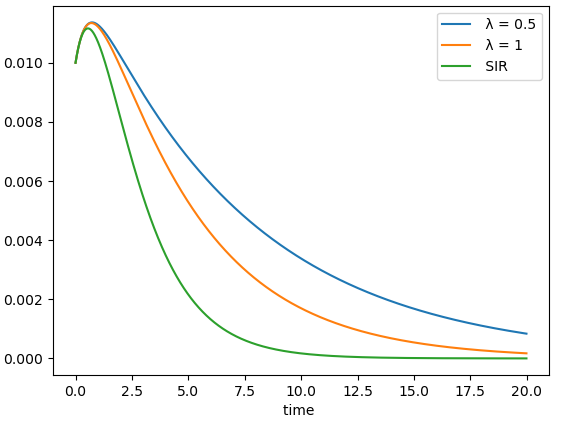}
         \caption{$\beta \equiv 0.5$, $\rho\equiv 1$.}
        
     \end{subfigure}
     \hfill
     \begin{subfigure}[b]{0.49\textwidth}

         \centering
         \includegraphics[width=\textwidth]{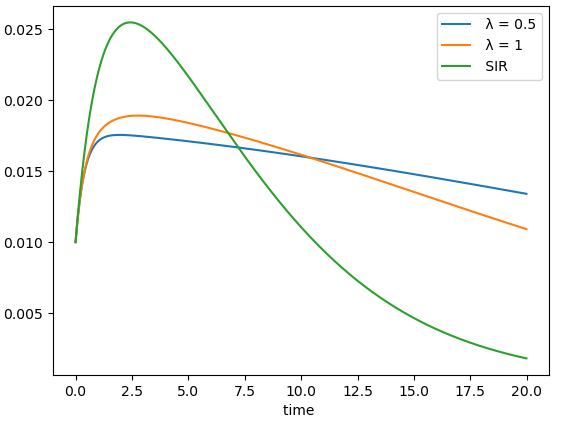}
         \caption{$\beta \equiv  1$,  $\rho\equiv 1$.}
     \end{subfigure}

        \caption{Large $n$ limit of the fraction of individuals who are exposed or infected in the SEIR process on the uniform $3$-regular (i.e., $\ebar^{(\infty)}(t)+\ibar^{(\infty)}(t)$ from \eqref{eq:sbar_infty}) for $\lambda \equiv 1$ and $\lambda\equiv0.5$, along with  the large $n$ limit of the fraction of infected individuals on the SIR process ($i^{(\infty)}(t)$ from \eqref{eq:s_i_infinity}) on the same graph and with same $\rho$ and $\beta$. Individuals are initially infected with probability $0.01$, and are otherwise susceptible.}
        \label{fig:sEIr_vs_sIr}
\end{figure}  

\subsection{Comparison with the Mean-Field approximation for the SIR model} \label{sec:mf-compare}

In this section, we restrict our attention to the SIR process on the uniform $\kappa$-regular graph, with the ratio $\rho/\beta$ being constant in time, and compare the asymptotically exact outbreak size with the corresponding  mean-field approximation. We first start by 
observing that on $\kappa$-regular graphs, the characterization \eqref{eq:Finfty_const} of the outbreak size can be simplified further as follows.

\begin{corollary} \label{cor:T_k-outbreak}
    Let $\kappa\in\N\setminus\{1\} $. Let $\{G_n\}_{n\in\N}$ be such that for every $n\in\N$, $G_n$ is chosen uniformly among all  $\kappa$-regular graphs with $n$ vertices, or equivalently  $G_n$ is a $\text{CM}_n(\delta_\kappa)$ graph. Suppose that there exists $r\in(0,\infty)$ such that $\rho_t/\beta_t=r$ for all $t\in[0,\infty)$.
    Then, it follows that
    
     \begin{equation*}
    \lim_{t\rightarrow\infty}s^{(\infty)}(t) = \sigma_\kappa.
\end{equation*} where $\sigma_\kappa\in(0,s_0]$ is the unique solution in $(0,s_0)$ of the equation
    \begin{equation}\label{eq:outbreak_Tk}
     \phi_\kappa(z):=z^{\frac{\kappa-2}{\kappa}}s_0^{\frac{2}{\kappa}}-(1+r)+rz^{-\frac{1}{\kappa}}s_0^{\frac{1}{\kappa}}=0.
\end{equation}
In particular, we have
    \begin{equation*}
        \sigma_2= \frac{s_0}{(1+r(1-s_0))^2}.
    \end{equation*}
\end{corollary}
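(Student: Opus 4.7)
The plan is to derive Corollary \ref{cor:T_k-outbreak} from Theorem \ref{thm:SIR-outbreak} by specializing to $\offspring = \delta_\kappa$ and then changing variables from $\Finfty$ to $\sigma := \lim_{t\to\infty} s^{(\infty)}(t)$.

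First I would observe that for $\offspring = \delta_\kappa$, definition \eqref{eq:sizab} forces $\sizeb = \delta_{\kappa-1}$, hence $M_{\offspring}(x) = e^{\kappa x}$ and $M_{\sizeb}(x) = e^{(\kappa-1)x}$. Substituting these into equation \eqref{eq:Finfty_const} and exponentiating, the two moment generating function terms collapse and one obtains
\begin{equation*}
    s_0 \;=\; \bigl(1 + r - r e^{\Finfty}\bigr)\, e^{(\kappa-2)\Finfty}.
\end{equation*}
By Theorem \ref{thm:SIR-outbreak}, this equation has a unique strictly positive solution $\Finfty$, and the limit to be computed equals $s_0 M_{\offspring}(-\Finfty) = s_0 e^{-\kappa\Finfty}$.

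Next I would introduce the substitution $\sigma := s_0 e^{-\kappa\Finfty}$, which is a strictly decreasing bijection from $(0, \infty)$ onto $(0, s_0)$ and gives $e^{\Finfty} = (s_0/\sigma)^{1/\kappa}$. Plugging into the displayed equation and dividing through by the appropriate power $(s_0/\sigma)^{(\kappa-2)/\kappa}$ yields, after elementary rearrangement,
\begin{equation*}
    s_0^{2/\kappa}\,\sigma^{(\kappa-2)/\kappa} - (1+r) + r\, s_0^{1/\kappa}\,\sigma^{-1/\kappa} \;=\; 0,
\end{equation*}
which is exactly $\phi_\kappa(\sigma) = 0$. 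Uniqueness of $\sigma_\kappa$ in $(0, s_0)$ follows immediately from the uniqueness of $\Finfty \in (0, \infty)$ asserted in Theorem \ref{thm:SIR-outbreak} and the bijectivity of the change of variables.

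For the explicit closed form in the case $\kappa = 2$, the exponent $(\kappa-2)/\kappa$ vanishes, so $\phi_2(z) = s_0 - (1+r) + r\, s_0^{1/2}\, z^{-1/2}$, and the equation $\phi_2(z)=0$ becomes linear in $z^{-1/2}$, which I would then invert directly. The proof is essentially routine algebra once the substitution $\sigma = s_0 e^{-\kappa\Finfty}$ is identified; there is no substantive obstacle here, only the bookkeeping of fractional exponents during the change of variables, and the observation that it is precisely the degeneration $(\kappa-2)/\kappa = 0$ at $\kappa=2$ that renders the problem explicitly solvable in that case.
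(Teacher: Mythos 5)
Your proposal is correct and follows essentially the same route as the paper: specialize to $\offspring=\delta_\kappa$ so that $\sizeb=\delta_{\kappa-1}$, reduce \eqref{eq:Finfty_const} to $(2-\kappa)\Finfty-\log(1+r(1-e^{\Finfty}))+\log(s_0)=0$, and substitute $\sigma=s_0e^{-\kappa\Finfty}$ to arrive at $\phi_\kappa(\sigma)=0$; the only (immaterial) difference is that you transfer uniqueness from Theorem \ref{thm:SIR-outbreak} through the bijection $\Finfty\mapsto s_0e^{-\kappa\Finfty}$, whereas the paper argues directly that $\phi_\kappa'$ vanishes at most once. One caveat: if you actually carry out the inversion for $\kappa=2$, the equation $rs_0^{1/2}z^{-1/2}=1+r-s_0$ gives $\sigma_2=s_0r^2/(1+r-s_0)^2=s_0\bigl(1+r^{-1}(1-s_0)\bigr)^{-2}$, which agrees with the corollary's displayed formula $s_0/(1+r(1-s_0))^2$ only when $r=1$, so the displayed formula appears to contain a typo ($r$ where $1/r$ is meant) and you should state the version your computation actually yields.
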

 \begin{proof} 
Fix $\kappa\in\N\setminus\{1\}$ and set $\offspring=\delta_\kappa$. It is immediate from \eqref{eq:sizab} that the size-biased distribution $\sizeb$ is equal to $\delta_{\kappa-1}$. For any $k\in\N$, $u\in\R$, we have that $M_{\delta_k}(u)=e^{ku}$. By Theorem \ref{thm:SIR-outbreak}, the final fraction of susceptible individuals $s^{(\infty)}(\infty)$ is equal to $s_0 e^{-\kappa\Finfty}$, where $\Finfty$ is the solution of equation \eqref{eq:Finfty_const}, which for $\sizeb=\delta_{\kappa-1}$ reduces to
\begin{equation*}
    (2-\kappa)\Finfty -\log(1+r(1-e^\Finfty)) +\log(s_0)=0.
\end{equation*}
By a simple arithmetic manipulation, $\sigma_{\kappa}:=s_0 e^{-\kappa\Finfty}$ satisfies equation \eqref{eq:outbreak_Tk}. 
Uniqueness of the solution to $\phi_\kappa(z)=0$ follows since $\phi_\kappa'(z)=0$ holds for at most one value of $z$, namely for $z=(s_0^{-1/\kappa}r/(\kappa-2))^{\kappa/(\kappa-1)}$.
\quad   
 \end{proof}

Figure \ref{fig:outbreaks_n} plots the analytic values of $\sigma_\kappa$ obtained from Corollary
\ref{cor:T_k-outbreak} versus simulated values for different values of $n$ and $\kappa$. We ran $200$ iterations for each pair $(n,$ $\kappa)$, sampling a new graph at every iteration. 
As shown therein, the limit appears to be a good approximation for graphs of moderate size (namely, with $n\geq 150$). We leave for future  research the problem of finding accurate error bounds for finite $n$.

\begin{figure}
\floatbox[{\capbeside\thisfloatsetup{capbesideposition={right,center},capbesidewidth=0.5\textwidth}}]{figure}[\FBwidth]
{\caption{ Outbreak size for the SIR process with $\beta\equiv 0.5$ and $\rho\equiv 1$  obtained from Monte Carlo simulations on random $ \kappa$-regular graphs on $n$ vertices (with $95\%$ confidence intervals), compared with the asymptotically exact value given by Corollary \ref{cor:T_k-outbreak} (ODE). Simulations are obtained via $100$ iterations each, with resampling of the graph at each iteration.}
        \label{fig:outbreaks_n}}
{\includegraphics[width=0.5\textwidth]{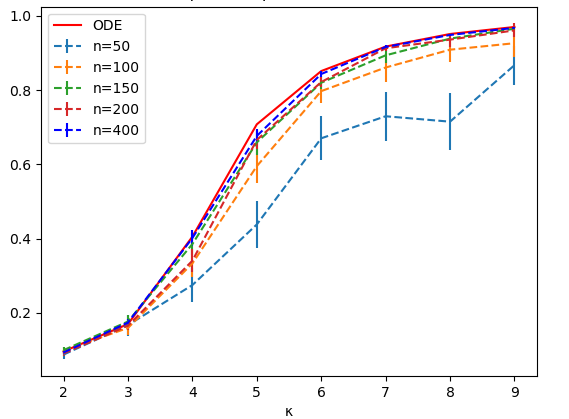}}\
\end{figure}

By Theorem \ref{thm:SIR-outbreak} and Corollary \ref{cor:T_k-outbreak}, $\sigma_\kappa$ is an asymptotically (in $n$) exact approximation of the total fraction of individuals ever infected on a SIR epidemic on a graph drawn uniformly among the $\kappa$-regular graphs on $n$ vertices.
We now compare this approximation with a scaled mean-field approximation to the SIR model on a $\kappa$-regular graph which can be formulated via the following system of ODEs, see for example \cite[Section 7]{Ganguly2022nonmarkovian}:
\begin{eqnarray*}
        \frac{ d \shat}{dt} (t) & = &  -\beta_t \kappa \shat(t)\ihat(t),
        \\ \frac{d \ihat(t)}{dt} & = &  \beta_t \kappa \shat (t) \ihat(t) -\rho_t \ihat(t),
\end{eqnarray*}
with initial conditions $\shat(0)=s_0$, $\ihat(0)=i_0=1-s_0$.  When there exists $r\in(0,\infty)$ such that $\rho_t/\beta_t=r$ for all $t$, by performing a change of variables and solving the equation $d \ihat / d \shat =-1 +\rho/ (\beta \kappa \shat)$, it can be shown  that $\lim_{t\rightarrow\infty}\shat(t)=\sigmahat_\kappa$ where $\sigmahat_\kappa=\sigmahat_\kappa(s_0)$ is the unique solution in $(0,s_0)$ of 

\begin{equation}\label{eq:mf-outbreak}
    \phihat_\kappa(\sigmahat_\kappa)=0, \qquad \text{with }\quad \phihat_\kappa(z):=s_0e^{{\frac{1}{r}\kappa(z-1)}}-z.
\end{equation}

\begin{figure}
     \centering
     \begin{subfigure}[b]{0.3\textwidth}
         \centering
         \includegraphics[width=\textwidth]{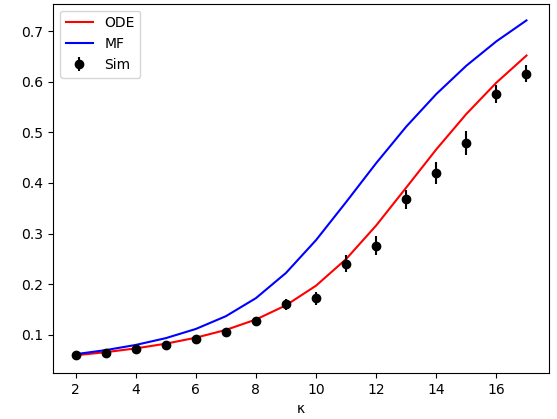}
         \caption{$\beta\equiv0.1$.}
        
     \end{subfigure}
     \hfill
     \begin{subfigure}[b]{0.3\textwidth}
         \centering
         \includegraphics[width=\textwidth]{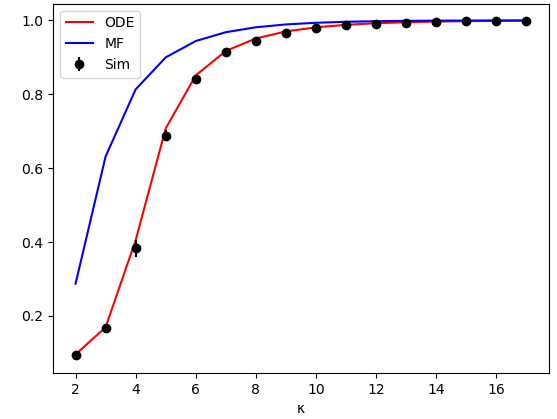}
         \caption{$\beta\equiv0.5$.}
     \end{subfigure}
     \hfill
     \begin{subfigure}[b]{0.3\textwidth}
         \centering
         \includegraphics[width=\textwidth]{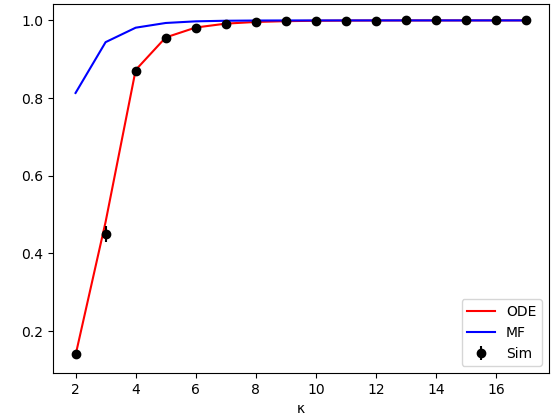}
         \caption{ $\beta\equiv1$.}
     \end{subfigure}
        \caption{Outbreak size on large ($n=400$ vertices) $\kappa$-regular configuration model graphs (Sim), with $\rho\equiv1$, and $s_0=0.9
        5$, along with the asymptotically exact value  $1-\sigma_\kappa$ from Corollary \ref{cor:T_k-outbreak}(ODE), and the mean-field approximation $1-\sigmahat_\kappa$ defined in \eqref{eq:mf-outbreak} (MF).  Simulations are obtained through $500$ iterations, resampling the graph at each iteration, and are plotted with $95\%$ confidence intervals.}
        \label{fig:s_infinity}
\end{figure}  

Our next result shows that the mean-field approximation always yields a larger estimate of the outbreak size on random regular graphs compared to the true asymptotic value. This is further illustrated in Figure \ref{fig:s_infinity}, which plots the mean-field prediction versus our prediction of the outbreak size on  random $\kappa$-regular graphs.

\begin{proposition} \label{prop:compare-meanfield}

Fix $s_0\in(0,1)$. For each $\kappa\in\N\setminus\{1\}$, let $\sigmahat_\kappa$ be as in \eqref{eq:mf-outbreak}, and $\sigma_\kappa$ be given as in Corollary \ref{cor:T_k-outbreak}. Then it follows that 
    \begin{equation} \label{eq:MF-vsLE}
         \sigmahat_\kappa < \sigma_\kappa.
    \end{equation}
    
\end{proposition}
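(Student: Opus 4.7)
My plan is to prove the inequality $\sigmahat_\kappa < \sigma_\kappa$ by showing $\phi_\kappa(\sigmahat_\kappa) > 0$. The key sign structure is that $\phi_\kappa$ is positive on $(0,\sigma_\kappa)$ and negative on $(\sigma_\kappa, s_0)$: indeed $\phi_\kappa(z) \to +\infty$ as $z \to 0^+$, $\phi_\kappa(s_0) = s_0 - 1 < 0$, and the critical-point analysis already used in the proof of Corollary~\ref{cor:T_k-outbreak} shows that $\phi_\kappa'$ vanishes at most once on $(0,\infty)$, so $\phi_\kappa$ can change sign only at $\sigma_\kappa$ on $(0, s_0)$.

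The next step is a change of variables $\hat{y} := (\sigmahat_\kappa/s_0)^{1/\kappa} \in (0,1)$. Taking logarithms in $\phihat_\kappa(\sigmahat_\kappa) = 0$ from \eqref{eq:mf-outbreak} converts the mean-field equation into the compact form
\begin{equation*}
s_0\, \hat{y}^\kappa \;=\; 1 + r\log \hat{y}.
\end{equation*}
Substituting $\sigmahat_\kappa = s_0 \hat{y}^\kappa$ into \eqref{eq:outbreak_Tk}, multiplying by $\hat{y}^2$, and using the identity above to eliminate the $s_0\hat{y}^\kappa$ term yields
\begin{equation*}
\hat{y}^{\,2}\, \phi_\kappa(\sigmahat_\kappa) \;=\; \psi(\hat{y}), \qquad \psi(y) \,:=\, 1 + r\log y + r y - (1+r) y^2.
\end{equation*}
The goal is therefore reduced to showing $\psi(\hat{y}) > 0$.

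Crucially $\psi$ depends only on $r$, not on $\kappa$ or $s_0$. A direct computation gives $\psi''(y) = -r/y^2 - 2(1+r) < 0$, so $\psi$ is strictly concave on $(0,\infty)$; together with $\psi(1) = 0$ and $\psi(y) \to -\infty$ as $y \to 0^+$, this forces $\psi$ to admit a unique zero $y_1 \in (0, 1)$, with $\psi > 0$ precisely on $(y_1, 1)$. The key observation is that since the left-hand side of $s_0 \hat{y}^\kappa = 1 + r\log \hat{y}$ is strictly positive, so is the right-hand side, giving the $\kappa$-uniform lower bound $\hat{y} > e^{-1/r}$. A short calculation yields $\psi(e^{-1/r}) = e^{-2/r}\bigl(r e^{1/r} - (1+r)\bigr)$, which is strictly positive by the elementary inequality $e^{1/r} > 1 + 1/r$. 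Hence $e^{-1/r} \in (y_1, 1)$, so $\hat{y} > e^{-1/r} > y_1$, and $\psi(\hat{y}) > 0$ follows.

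The main obstacle is not any individual computation but rather identifying the right reformulation: a direct comparison between $\hat{y}$ and $y_* := (\sigma_\kappa/s_0)^{1/\kappa}$ is awkward because those quantities satisfy very different defining equations, one transcendental and the other algebraic. The decisive idea is to use the mean-field relation to collapse the $\kappa$-dependence, reducing the problem to a single $\kappa$-independent concave inequality that can then be settled by the uniform bound $\hat{y} > e^{-1/r}$ and the one-line estimate $re^{1/r} > 1 + r$.
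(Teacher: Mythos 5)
Your proof is correct, and it shares the paper's overall strategy --- both arguments reduce the claim to showing $\phi_\kappa(\sigmahat_\kappa)>0$ and then invoke the sign structure of $\phi_\kappa$ on $(0,s_0)$ --- but the way you establish that positivity is genuinely different. The paper substitutes $\sigmahat_\kappa=s_0e^{\kappa(\sigmahat_\kappa-1)/r}$ directly into $\phi_\kappa$ and finishes in two lines with $e^z>1+z$. You instead reparametrize by $\hat y=(\sigmahat_\kappa/s_0)^{1/\kappa}$, which collapses all the $\kappa$- and $s_0$-dependence into the single relation $s_0\hat y^\kappa=1+r\log\hat y$ and the $\kappa$-independent strictly concave function $\psi$; the conclusion then follows from the uniform bound $\hat y>e^{-1/r}$ together with $\psi(e^{-1/r})>0$. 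Your route is longer, but it buys a cleaner structural picture: the comparison is controlled by one function $\psi$ depending only on $r$, uniformly over $\kappa$ and $s_0$, whereas the paper's estimate is an ad hoc computation repeated implicitly for each $\kappa$. One small ordering point: strict concavity together with $\psi(1)=0$ and $\psi(y)\to-\infty$ as $y\to0^+$ does not by itself force a zero of $\psi$ in $(0,1)$ --- you also need to know that $\psi$ is positive somewhere in $(0,1)$, which follows either from $\psi'(1)=-2<0$ or from your later computation $\psi(e^{-1/r})>0$. Since you do establish the latter, the argument is complete, but the existence of $y_1$ should be asserted after that computation rather than before; alternatively, the inequality you actually need, $\psi(\hat y)>0$ for $\hat y\in(e^{-1/r},1)$, follows directly from concavity, $\psi(e^{-1/r})>0$, and $\psi(1)=0$, with no mention of $y_1$ at all.
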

 \begin{proof} 
Fix $s_0\in(0,1)$, $r>0$. 
Recall that $\sigma_\kappa$ is the unique solution in $(0,s_0)$ to $\phi_\kappa(z)=0$.
From the proof of  Corollary \ref{cor:T_k-outbreak}, we know that $\phi_\kappa(z)>0$ for $z\in(0,\sigma_\kappa)$  and $\phi_\kappa(z)<0$ for $z\in(\sigma_\kappa,s_0)$. Therefore, to show  \eqref{eq:MF-vsLE}, it is enough to show that $\phi_\kappa(\sigmahat_\kappa)>0$. Using the fact that $\sigmahat_\kappa= s_0 \exp(\kappa(\sigmahat_\kappa -1) / r)$, and that $e^z>1+z$ for every $z>0$, we have
    \begin{align}
        \begin{split}
\phi_\kappa(\sigmahat_\kappa)&=\phi_\kappa(s_0e^{\frac{\kappa}{r}(\sigmahat_\kappa-1)})
            \\&=  \left(s_0e^{\frac{\kappa}{r}(\sigmahat_\kappa-1)}\right)^{\frac{\kappa-2}{\kappa}}s_0^{\frac{2}{\kappa}} +rs_0^{1/\kappa} \left(s_0e^{\frac{\kappa}{r}(\sigmahat_\kappa-1)}\right)^{-1/\kappa} -1 -r
            \\ &= e^{\frac{1}{r}(\kappa-2)(\sigmahat_\kappa-1)}s_0 + re^{\frac{1}{r}(1-\sigmahat_\kappa)}-1-r
            \\ &> s_0 e^{\frac{\kappa}{r} (\sigmahat_\kappa-1)}(e^{2\frac{1}{r}(1-\sigmahat_\kappa)}-1).
        \end{split}
    \end{align}
Since the last expression is strictly positive, this concludes the proof.
\quad     \end{proof}

\section{Proofs of main results}\label{sec:main_proofs}
%%%%%%%%%%%%%%%%%%%%%%%%%%%%%%%%%%%%%%%%%%%%%%%%%%%%%%%%%%%%%%%%%%%%%%%%%%%%%%%%%%%%%%%%%%%%%%%%%%%%%%%%%%%%%%%%%%%%%%%%%%%%%%%%%%%%%%%%%%%%%%%%%%%%%%%%%%%%%%%%%%%%%%%%%%%%%%%%%%%%%%%%%%%%%%%%%%%%%%%%%%%%%%%%%%%%%%%%%%%%%%%%%%%%%%%%%%%%%%%%%%%%%%%%%%%%%%%%%%%%%%%%%
 In Section \ref{sec:S(E)IR} we introduce a parameterized family of processes that interpolates between the SIR and SEIR processes.  This allows us to prove some intermediate results simultaneously for both processes. In Section \ref{sec:proof_transient} we provide the proofs of Theorem \ref{thm:MAIN_SIR} and Theorem \ref{thm:SEIR_main}. In Section \ref{sec:proofs_outbreak} we prove Theorem \ref{thm:SIR-outbreak} and Theorem \ref{thm:SEIR-outbreak}.  Throughout,  $\{G_n\}_{n\in\N}$ is a sequence of random graphs, $\offspring\in\Pmc(\natzero)$,  $\sizeb$ is the size-biased version of $\offspring$, as defined in \eqref{eq:sizab}, and $\Tmc$ is a UGW($\offspring$) tree. We assume that $\{G_n\}_{n\in\N}$ and $\offspring$ satisfy Assumption \ref{assu:graph_sequence} and that the rates $\beta,$ $\lambda,$ $  \rho : [0,\infty)\rightarrow (0,\infty)$
satisfy Assumption \ref{assu:beta_lambda_rho}.

\subsection{The Hybrid S(E)IR Process}\label{sec:S(E)IR}
 Fix the rates $\beta,\rho,\lambda :[0,\infty)\rightarrow(0,\infty)$ as in Section \ref{sec:SEIR}, the interpolation parameter   $\alpha\in[0,1]$,  probabilities $s_0\in(0,1)$,  and $e_0,\ i_0 \in[0,1]$ with $s_0+e_0+i_0=1$.
 For a graph $G=(V,E)$, let $\xi^{{G,\alpha}}$  be a Markov chain on $\stateSS^V$ describing the evolution of interacting individuals or particles indexed by the nodes of $V$, where the state of each particle lies in the space $\stateSS=\{S,E,I,R\}$. The initial states are i.i.d. across particles, with common law $p_0$ given by
 \begin{equation}\label{eq:p_0}
     p_0(b)=s_0\one_{\set{b=S}}+ e_0\one_{\set{b=E}} + i_0\one_{\set{b=I}},
 \end{equation}
 $b\in\stateSS$.
 Given $y\in\stateSS^\ell$ for some $\ell\in\natzero$ (setting $\stateSS^0=\emptyset$) recall that $\infects(y)$ denotes the number of entries of $y$ that are equal to $I$.  At time $t\in[0,\infty)$, the jump rates for the  jump processes $\xi^{G,\alpha}_v$ representing the evolution of a particle $v\in G$ are given as follows:
 \begin{itemize}
     \item from $S$ to $E$ at rate $\alpha\beta_t\infects(\xi^{{G,\alpha}}_{\partial v}(t-))$;
     \item from $S$ to $I$ at rate   $(1-\alpha)\beta_t\infects(\xi^{{G,\alpha}}_{\partial v}(t-))$;
     \item from $E$ to $I$ at rate $\lambda_t$;
     \item from $I$ to $R$ at rate $\rho_t$.
 \end{itemize}
 No other transitions are allowed. 
 When $G$ is finite, classical results guarantee the  existence of the process $\xi^{G,\alpha}$ and the uniqueness of its law follows from standard results  about finite-state continuous time Markov chains, see for instance \cite[Proposition 4.1]{Ganguly2022hydrodynamic}.  We note that if $\alpha=1$, $\xi^{G,\alpha}$ reduces to the SEIR model, and if $\alpha=0$ (and $e_0=0$), $\xi^{G,\alpha}$ is the SIR model. Throughout, whenever $\alpha=0$, we implicitly assume that $e_0=0$, and Assumption \ref{assu:beta_lambda_rho} can be substituted with Assumption \ref{assu:beta_rho}. We refer to $\xi^{G,\alpha}$ as the S(E)IR process.
    We also observe that the process $\xi^{G,\alpha}$ is non-decreasing for every $G$ and $\alpha$, that is, for every $v\in G$ and $t,s\in[0,\infty)$,
\begin{equation}\label{eq:mono}
    t\leq s \ \Rightarrow \  \xi_v^{G,\alpha}(t)\leq \xi_v^{G,\alpha}(s).
\end{equation}

 Since we are interested in studying the limit of the S(E)IR process on locally tree-like graphs $G_n$ with $|G_n|\rightarrow\infty$ and $G_n$ converging to a limit random tree,  we need to define $\xi^{\Tmc,\alpha}$ on a possibly infinite tree $\Tmc$. 
Intuitively, $\xi^{\Tmc,\alpha}$ is a Markov jump process with the same rates as described above, but due to randomness in the tree structure, a rigorous definition (and subsequent characterization of properties) is most conveniently expressed in terms of the following (standard) Ulam-Harris-Neveu labeling  which identifies each realization of $\Tmc$ with a subgraph of the graph of all possible vertices. The latter has  vertex set $\V:=\{\root\} \cup (\cup_{k=1}^\infty \N^k)$, where $\root$ denotes the root, and edges $\{\{v,vi\}: v\in \V, i\in\N\}$, where $vi$ denotes concatenation, with the convention $\root u =u \root =u$ for all $u\in\V$. For $n\in\natzero$, we also let $\V_n:=\{\root\} \cup (\cup_{k=1}^n \N^k)$.  Given a vertex $v\in\V\setminus\{\root\}$, denote by $\pi_v$ its \textit{parent}, defined to be the unique $w\in\V$ such that there exists $k\in \N$ with $wk=v$. The \textit{children} of a vertex $v\in\V$ are defined to be the set $C^\V_v:=\{ vi\}_{i\in\N}$. 

Given a  tree $\Tmc$ with root $\root_\Tmc$, we identify it  (uniquely up to root preserving automorphisms of $\Tmc$) as a subgraph of $\V$ via a map $\Vmc$ from the vertex set of $\Tmc$ to $\V$ such that 
\begin{enumerate}[label=(\roman*)]
    \item $\Vmc(\root_\Tmc)=\root$;
    \item $\Vmc(\partial^\Tmc_\root)= \{m\in\N \ : \ m\leq d_\root^\Tmc\}$;
    \item for $v
    \in\Tmc$ at graph distance\footnote{given two vertices $v$ and $w$ in a graph $G$, the graph distance between them is the minimum number of edges on a path from $v$ to $w$.} $k\in\N$ from $\root_\Tmc$, $\Vmc(v)=u \in\N^{k}$ and $\Vmc(\partial^\Tmc_v)=\{\pi_u\}\cup\{um \ : \ m\in\N,  m\leq d^\Tmc_v-1\}$.
    \end{enumerate}
In order to represent elements in $\stateSS^\Tmc$ as  marks on $\V$,
we consider a new mark $\extra$, and define $\stateSS_{\extra}:=\stateSS\cup\{\extra\}$. Given $x\in\stateSS^\Tmc$, we extend it to an element in $ (\stateSS_{\extra})^\V$  by setting $x_w=\extra$ for all $w\in\V \setminus \Tmc$. Whenever we consider a graph $\Tmc\subset \V$ and $v\in\Tmc$, we use $\partial_v$ and $d_v$  denote neighborhoods and degrees with respect to $\Tmc$. We use $\partial^\V A$  to refer to the boundary in $\V$ of a set $A\subset \V$, and set $\partial^\V_v=\partial^\V\{v\}$ for $v\in\V$. 
Given an interval $\Smc \subset \R$  and $\Mmc$ a metric space, let $\Dmc(\Smc:\Mmc)$ be the space of càdlàg functions\footnote{right continuous functions with finite left limits at every $t$ in the interior of $\Smc$.}  equipped with the Skorokhod topology. For $x$ a (possibly random) element in $\Dmc(\Smc:\Mmc)$ and $t\in \Smc$, we write $x[t]:=\{x(s)\ : s\in \Smc\cap (-\infty,t]\}$ and $x[t):=\{x(s)\ : \ s\in \Smc\cap (-\infty,t)\}$.
 Throughout, we write $\Dmc:=\Dmc([0,\infty):\stateSS])$, and we set $\Dmc_\extra$ to be the union of $\Dmc$ and the single element consisting of the constant-$\extra$ function.

For $y\in\stateSS_\extra^\infty$ we write $\infects(y)=|\set{m\ : \ y_m=I }|$. Also, for simplicity, we identify the states $(S,E,I,R)$ with the vector $(0,1,2,3)$, and the set of possible jumps with $\Jmc=\{1,2\}$.
The jump rate function $q_\alpha \ : \Jmc \times (0,\infty) \times \Dmc_\extra \times  \Dmc_\extra^\infty \rightarrow \R_+$ is then given by
\begin{align}\label{eq:S(E)IR-rate}
    \begin{split}
        &q_\alpha(1, t,x, y) =\one_{\{x(t-)=S\}} \alpha{\beta_t} \infects (y(t-))+\one_{\{x(t-)=E\}} \lambda_t + \one_{\{x(t-)=I\}} {\rho_t},
        \\& q_\alpha(2,t,x , y) =\one_{\{x(t-)=S\}} (1-\alpha){\beta_t} \infects (y(t-)).
    \end{split}
\end{align}
\begin{remark}\label{rmk:q_reduces}
     When $\alpha=0$, $q_\alpha$ defined in \eqref{eq:S(E)IR-rate} reduces to the jump rate function of the SIR process as described in Section \ref{sec:SIR}, and when $\alpha=1$, $q_\alpha$ coincides with the jump rate function of the SEIR process as described in Section \ref{sec:SEIR}.
\end{remark}
Given $j\in\Jmc$ and $v\in\V$, we define $j^{(v)}\in\{0,1,2\}^\V$ by $j^{(v)}_w=j\one_{\{v=w\}}$ for all $w\in\V.$
We define $\xi^{\Tmc,\alpha}$ as a continuous time Markov chain on $\stateSS_\extra^{\V}$ with jump directions $\{j^{(v)}\}_{j\in\Jmc,v\in\V}$ and corresponding jump rates at time $t$ given by $q_\alpha(j,t,\xi_v^{\Tmc,\alpha}, \xi^{\Tmc,\alpha}_{\partial^\V_v})$.
The initial state of the process is given by 
$\xi^{\Tmc,\alpha}_v(0)= z^{p_0}_v$, where $z^{p_0}=\{z^{p_0}_v\}_{v\in\V}$ satisfies the following assumption.
\begin{assumption} \label{assu:initial_z}
    Let $z^{(1)}=\{z^{(1)}_v\}_{v\in\V}$ be a collection of  i.i.d.  $\stateSS$-valued random variables with common law $p_0$ given by \eqref{eq:p_0}. Let
    $ z^{(2)}$ be a $\{1,\extra\}^\V$-valued random variables independent of $z^{(1)}$ such that the subgraph of $\V$ induced by $\{v \ : z^{(2)}_v\neq \extra\}$ is equal in law to a UGW$(\offspring)$ tree. 
    The $\stateSS^\V$-valued random variable $z^{p_0}$ satisfies
    \begin{equation*}
        z ^{p_0}_v=z^{(1)}_v\one_{\{z_v^{(2)}\neq\extra\}}+ \extra \one_{\set{z_v^{(2)}=\extra}}, \qquad v \in\V.
    \end{equation*}
\end{assumption}

We can easily recover the graph $\Tmc$ from the process $\xi^{\Tmc,\alpha}$ as follows:  
\[  \Tmc = \Tmc (\xi^{\Tmc, \alpha}) := \{v \in V: \xi^{\Tmc,\alpha}_v(0) \neq \extra\} = \{v \ : \z^{p_0}_v \neq \extra\}.   \]
Since the graph $\Tmc$ can be infinite, it is no longer immediate that the process $\xi^{\Tmc,\alpha}$ with the intuitive description above is well defined (see \cite[Appendix A]{Ganguly2022hydrodynamic}). However, since $\Tmc$ is a UGW$(\offspring)$ with $\offspring$ having a finite second moment (see Assumption \ref{assu:graph_sequence}), this is guaranteed by the following result proved in  \cite{Ganguly2022hydrodynamic}, which also characterizes $\xi^{\Tmc,\alpha}$ as the unique in law solution of a certain jump SDE.
\begin{lemma}\label{lem:S(E)IR-exist}
The S(E)IR process $\xi^{\Tmc,\alpha}$ exist and is unique in law. Furthermore, its law is the unique solution to the SDE     ,
 \begin{equation}\label{eq:xi-SDE}
        \xi^{\Tmc,\alpha}_v(t) =\
             z^{p_0}_v + \sum_{j=1,2}\int_{(0,t]\times [0,\infty)} j \one_{\left\{r<q_\alpha\left(j, s, \xi^{\Tmc,\alpha}_v,  \xi^{\Tmc,\alpha}_{\partial^\V_v}\right)\right\}} \textbf{N}_v(ds, \ dr), \quad   v\in\V, \ t\in[0,\infty),
    \end{equation}
    %\one_{\{z^{p_0}_v\neq\extra\} }
    where $z^{p_0}$ is a $\stateSS_\extra^\V$-valued random element satisfying Assumption \ref{assu:initial_z} and
     $\textbf{N}= \set{\textbf{N}_v \ : v \in \V  }$ are i.i.d. 
     Poisson point processes on $(0,\infty) \times [0,\infty)$ with intensity measure equal to Lebesgue measure, independent of $z^{p_0}$.
\end{lemma}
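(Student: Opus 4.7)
The plan is to prove this by appealing to the general existence and uniqueness result for locally interacting continuous-time Markov chains established in \cite{Ganguly2022hydrodynamic}, once I verify that the S(E)IR process satisfies its hypotheses. The key verifications are: (i) the jump rate function $q_\alpha$ in \eqref{eq:S(E)IR-rate} is continuous in the time variable (by Assumption \ref{assu:beta_lambda_rho}), measurable in the state, and depends only on $(\xi_v(t-), \xi_{\partial^\V_v}(t-))$, so the candidate process is a well-defined Markov jump process with jump directions in $\Jmc$; (ii) the total jump rate at a vertex $v$ at time $t$ is bounded above by $\beta_t d_v + \lambda_t + \rho_t$, which is integrable over compact time intervals and depends on $v$ only through its degree; and (iii) the offspring distribution $\offspring$ has finite second (in fact third) moment by Assumption \ref{assu:graph_sequence}, which translates into finite moments on the degree of each vertex in the UGW$(\offspring)$ tree $\Tmc$.

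For existence, I would construct $\xi^{\Tmc,\alpha}$ as a limit of its restrictions to finite truncations. Setting $\Tmc_n := \Tmc \cap \V_n$, the process $\xi^{n,\alpha}$ on $\Tmc_n$ can be obtained by solving \eqref{eq:xi-SDE} restricted to $\Tmc_n$ pathwise: on a finite graph with bounded rates this is a standard finite-state continuous-time Markov chain, with both existence and pathwise uniqueness immediate from processing the atoms of the PPPs $\{\Nbf_v\}_{v\in\Tmc_n}$ in chronological order. The decisive structural ingredient is the monotonicity property \eqref{eq:mono}: each vertex undergoes at most three transitions along $S \to E \to I \to R$ (or $S \to I \to R$ when $\alpha = 0$), so influences propagate across the tree through only a bounded number of jumps per vertex. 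Combined with the degree moment bound, this lets me show that for fixed $v$ and finite horizon $T$, the trajectory $\xi^{n,\alpha}_v\!\mid_{[0,T]}$ stabilizes almost surely as $n \to \infty$; taking the limit defines a process $\xi^{\Tmc,\alpha}$ satisfying \eqref{eq:xi-SDE}.

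For uniqueness in law, it suffices to establish pathwise uniqueness of \eqref{eq:xi-SDE} given the initial condition $z^{p_0}$ and driving noise $\Nbf$. Any two solutions must agree at all PPP atoms below the relevant rate threshold, and the monotonicity again bounds the total number of such jumps per vertex, so agreement can be propagated outward from the root by induction on generations, using the fact that on a tree there are no loops in the dependency structure. The resulting law of $\xi^{\Tmc,\alpha}$ is then independent of the choice of driving noise.

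The main obstacle, and the place where some genuine work beyond the finite-graph case is required, is controlling the propagation of dependencies through the (potentially infinite) tree: the rate at vertex $v$ depends on the states of its neighbors, which depend on their neighbors, and so on. This is handled by combining two features specific to this setting: the monotonicity of the S(E)IR dynamics, which caps the number of transitions per vertex at three, and the locally tree-like structure with finite degree moments, which keeps the expected number of active vertices in any generation finite. The general framework of \cite{Ganguly2022hydrodynamic} packages these ingredients into the existence and uniqueness statement quoted in the lemma, so in the paper itself the verification reduces to checking the structural hypotheses above.
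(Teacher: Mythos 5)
Your proposal is correct in its operative strategy and it is essentially the one the paper uses: the paper's proof of Lemma \ref{lem:S(E)IR-exist} is a two-line verification that Assumption \ref{assu:beta_lambda_rho} implies the rate hypothesis (Assumption 1) of \cite{Ganguly2022hydrodynamic} and that the UGW$(\offspring)$ tree is \emph{finitely dissociable} in the sense of that paper, after which Theorem 4.2 there gives existence and uniqueness in law for the SDE \eqref{eq:xi-SDE}. Where your account diverges is in what you identify as the ``decisive structural ingredient.'' The cited framework does not use monotonicity of the S(E)IR dynamics at all; it applies to general interacting jump processes whose per-edge rates are bounded, and the control of dependency propagation through the infinite tree is carried entirely by the graph-theoretic property of finite dissociability (their Proposition 5.1 for UGW trees), not by the fact that each vertex jumps at most three times. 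Monotonicity is genuinely needed later (for Proposition \ref{prop:cond_ind_prop} and the outbreak-size analysis), but importing it into the well-posedness argument both overstates what is required and understates the real difficulty: since $\offspring$ can have unbounded support (e.g.\ Poisson), the total jump rate $\beta_t d_v+\lambda_t+\rho_t$ is not uniformly bounded over vertices, and your truncation-plus-stabilization sketch would need the chain-of-influence estimate to beat the geometric growth of $\E[\prod_i d_{v_i}]$ along paths --- which is exactly the content of the finite-dissociability machinery you would be reproving. So: same proof at the level the paper operates on, but if you intend the self-contained construction to stand on its own, replace the appeal to monotonicity with (or supplement it by) the path-counting/dissociation estimate for unbounded degrees.
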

\begin{proof} Existence and uniqueness in law of the solution to the SDE \eqref{eq:xi-SDE} follows from \cite[Theorem 4.2]{Ganguly2022hydrodynamic} on observing that
    Assumption \ref{assu:beta_lambda_rho} implies  \cite[Assumption 1]{Ganguly2022hydrodynamic}, and that by \cite[Proposition 5.1]{Ganguly2022hydrodynamic}, the UGW($\offspring$) tree $\Tmc$ is finitely dissociable in the sense of \cite[
Definition 5.1]{Ganguly2022hydrodynamic}. 
\end{proof}

We now define  
\begin{align}
    \begin{split}
        X^{\Tmc}:=\xi^{\Tmc,0},
        \\\Xbar^{\Tmc}:= \xi^{\Tmc,1}.
    \end{split}
\end{align}
\begin{remark}\label{rmk:SIR/SEIR-S(E)IR}
By Remark \ref{rmk:q_reduces} and the uniqueness in law established in Lemma \ref{lem:S(E)IR-exist}, $X^\Tmc_\Tmc$ and $\Xbar^\Tmc_\Tmc$ are the SIR and SEIR processes on the possibly infinite random tree $\Tmc$, in a sense consistent with the definitions of the SIR and SEIR processes on finite graphs given in Section \ref{sec:SIR} and Section \ref{sec:SEIR}, respectively. 
\end{remark}

%%%%%%

\subsection{Proofs of Transient Results}\label{sec:proof_transient}

The proof of Theorem \ref{thm:MAIN_SIR} is presented in Section \ref{sec:proof_odes}. The proof  of Theorem \ref{thm:SEIR_main} uses similar techniques, and is thus only outlined in Section \ref{sec:proof_odes}, with details relegated to Appendix \ref{sec:proof-SEIR}. Both proofs rely on four preliminary results first presented in Section \ref{sec:preliminary_res}. The first ingredient (Theorem \ref{thm:hydrodinamic}) is a convergence result from 
\cite{Ganguly2022hydrodynamic}, 
which shows that the limits of the fractions $|\{v\in G_n: \ \xi^{\Tmc,\alpha}_v(t)=b\}|/|G_n|$, $b\in\stateSS$, coincide with the root marginal probabilities of the limiting S(E)IR dynamics on the graph $\Tmc$ that arises as the local limit of the graph sequence  
$\{G_n\}_{n\in\N}$. The second ingredient is a projection result (Proposition \ref{prop:filtering}) that identifies the law of the marginal dynamics on $\Tmc$ in terms of a certain ({\em a priori} non-Markovian) jump process with somewhat implicit jump rates. This result is a generalization  of similar projection  results obtained in \cite{Ganguly2023marginal,Ganguly2023characterization}. 
The third and fourth results (Proposition \ref{prop:cond_ind_prop} and Proposition \ref{prop:symmetry}) identify key conditional independence and symmetry properties of the dynamics  to explicitly identify  the jump rates of the marginal dynamics.

\begin{remark}\label{rmk:compare_Work_Ganguly} 
For a general class of  interacting particle systems (IPS) on UGW trees $\Tmc$ whose offspring satisfies suitable moment conditions,  which  in particular includes the SIR and S(E)IR processes,  we expect that the marginal dynamics of the IPS on the root and its neighborhood   can be described autonomously in terms of a certain (non-Markovian) stochastic process. Indeed, in the special case when $\Tmc$ is a $\kappa$-regular tree, such a result is established in  
\cite{Ganguly2023marginal} (see also \cite{Ganguly2022nonmarkovian}) by 
 appealing to a  Markov random field property for the trajectories of the process  proved  in \cite[Theorem 3.7]{Ganguly2022interacting} (see also \cite{lacker2021locally,Lacker2021marginal} for corresponding results for interacting diffusions).   
  The current work  goes beyond regular trees to include a   large class of  UGW trees, and also establishes a much stronger conditional independence property of the trajectories  for the S(E)IR process when compared to  general IPS.  The latter is then used to  show that for the S(E)IR process, the root marginal dynamics is in fact  a Markovian process (see Remark \ref{rem-Markov}), and thus its law can be  described by a system of ODEs (namely, the forward Kolmogorov equations describing the evolution of the law of the Markov process).  \end{remark}

We remind the reader that the standing assumptions made at the beginning of Section \ref{sec:main_proofs} are in effect throughout.  
%%%%%
\subsubsection{Preliminary Results}\label{sec:preliminary_res} We start by stating the convergence result.
\begin{theorem}
     \label{thm:hydrodinamic} 
For every $n\in\N$ and $b\in\stateSS$, set
\begin{equation}
    p_b^{n,\alpha}(t) := \frac{1}{|G_n|}\sum_{v\in G_n} \one_{\{\xi^{G_n,\alpha}_v(t)=b\}}.
\end{equation}
For every $t\in[0,\infty)$,
as $n\rightarrow\infty$, 
\begin{equation}
    (p_b^{n,\alpha}(t))_{b\in\stateSS}\rightarrow  (\P(\xi^{\Tmc,\alpha}_\root(t)=b))_{b\in\stateSS},
\end{equation}
where the convergence is in probability.
\end{theorem}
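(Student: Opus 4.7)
The plan is to deduce this statement directly from the general hydrodynamic limit result established in \cite{Ganguly2022hydrodynamic}, which applies to a broad class of locally interacting Markov processes on sequences of graphs that converge locally weakly in probability. Specifically, that result asserts that the empirical measure of particle trajectories,
\[
\mu^{n,\alpha} := \frac{1}{|G_n|}\sum_{v \in G_n} \delta_{\xi^{G_n,\alpha}_v} \in \Pmc(\Dmc([0,\infty):\stateSS)),
\]
converges in probability (with respect to the weak topology induced by the Skorokhod topology on the path space) to $\law(\xi^{\Tmc,\alpha}_\root)$, the law of the trajectory of the root particle in the S(E)IR process on the limit tree $\Tmc$.

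First I would verify that the hypotheses of the general theorem are in force. Under Assumption \ref{assu:graph_sequence}, the sparse Erd\H{o}s–R\'enyi and configuration model sequences $\{G_n\}_{n\in\N}$ converge locally weakly in probability to $\Tmc = \text{UGW}(\offspring)$; this is a standard fact (see \cite[Theorem 2.18 and Theorem 4.1]{van2009randomII}, as cited in Section \ref{sec:transient}). The jump rate function $q_\alpha$ given in \eqref{eq:S(E)IR-rate} is linear in the number of infected neighbors and continuous in $t$ by Assumption \ref{assu:beta_lambda_rho}, so it satisfies \cite[Assumption 1]{Ganguly2022hydrodynamic}. The UGW tree $\Tmc$ is finitely dissociable by \cite[Proposition 5.1]{Ganguly2022hydrodynamic} (this also entered the proof of Lemma \ref{lem:S(E)IR-exist}). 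Finally, the i.i.d.\ initial conditions with law $p_0$ in \eqref{eq:p_0} fit within the framework.

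Second I would translate the empirical measure convergence into convergence of the state fractions $p_b^{n,\alpha}(t)$. For each fixed $b \in \stateSS$ and $t \in [0,\infty)$, define the functional $\phi_{b,t} : \Dmc([0,\infty):\stateSS) \to \{0,1\}$ by $\phi_{b,t}(x) := \one_{\{x(t) = b\}}$. Then $p_b^{n,\alpha}(t) = \langle \mu^{n,\alpha}, \phi_{b,t}\rangle$ and $\P(\xi^{\Tmc,\alpha}_\root(t)=b) = \langle \law(\xi^{\Tmc,\alpha}_\root), \phi_{b,t}\rangle$. Although $\phi_{b,t}$ is not globally continuous on Skorokhod space, it is continuous at any path $x$ that does not jump at time $t$. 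Since $\xi^{\Tmc,\alpha}_\root$ has at most countably many jumps and its jump-rate intensities are absolutely continuous in time (by Assumption \ref{assu:beta_lambda_rho}), we have $\P(\xi^{\Tmc,\alpha}_\root(t) \neq \xi^{\Tmc,\alpha}_\root(t-)) = 0$ for every fixed $t$. Thus $\phi_{b,t}$ is a.s.\ continuous under $\law(\xi^{\Tmc,\alpha}_\root)$, and the continuous mapping theorem (for weak convergence of measures evaluated at bounded functionals that are a.s.\ continuous under the limit) gives $p_b^{n,\alpha}(t) \xrightarrow{p} \P(\xi^{\Tmc,\alpha}_\root(t)=b)$.

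The main obstacle is not really in the argument above — since each required ingredient (local convergence of graphs, finite dissociability, regularity of rates, well-posedness of $\xi^{\Tmc,\alpha}$) has already been arranged in the setup of Sections \ref{sec:SIR}–\ref{sec:S(E)IR} — but rather in ensuring that the hypotheses of \cite{Ganguly2022hydrodynamic} are matched verbatim to our setting. In particular, one must take care that the bookkeeping with the $\extra$ mark and the embedding of $\Tmc$ into $\V$ via the Ulam–Harris–Neveu labeling introduced in Section \ref{sec:S(E)IR} is consistent with the framework of \cite{Ganguly2022hydrodynamic}; once this is established, the result follows as a direct application.
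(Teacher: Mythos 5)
Your proposal is correct and takes essentially the same route as the paper: both deduce the statement as a direct application of the general hydrodynamic limit result in \cite{Ganguly2022hydrodynamic}, after verifying local weak convergence of the graph sequence, finite dissociability, and the regularity of the rates. The paper invokes \cite[Corollary 4.7]{Ganguly2022hydrodynamic}, which already yields convergence of the time-marginal state fractions (and it additionally checks that \cite[Assumption 2]{Ganguly2022hydrodynamic} holds trivially because the state space is discrete, and uses \cite[Corollary 5.16]{Ganguly2022hydrodynamic} to get finite dissociability of the finite graphs $G_n$ as well as of $\Tmc$), so your extra continuous-mapping step passing from path-space empirical measures to fixed-time marginals is subsumed but harmless.
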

\begin{proof}
   The statement follows from
   \cite[Corollary 4.7]{Ganguly2022hydrodynamic} on observing that Assumption \ref{assu:beta_lambda_rho} implies \cite[Assumption 1]{Ganguly2022hydrodynamic}, Assumption \ref{assu:graph_sequence}, along with \cite[Corollary 5.16]{Ganguly2022hydrodynamic} implies that the graph sequence $\{G_n\}_{n\in\N}$ and $\Tmc$ are a.s. finitely dissociable in the sense of \cite[Definition 5.11]{Ganguly2022hydrodynamic}, and that \cite[Assumption 2]{Ganguly2022hydrodynamic} holds trivially since the state $\stateSS_\extra$ is discrete.
\end{proof}

In view of Theorem \ref{thm:hydrodinamic}, our next goal is to characterize the law of the root marginal of $\xi^{\Tmc,\alpha}$. 
% Following the general philosophy introduced in \cite{Lacker2021marginal,Ganguly2022nonmarkovian,Ganguly2023marginal,Ganguly2023characterization}, 
We first apply a projection result that characterizes the law of any marginal $\xi_U^{\Tmc,\alpha}$ for $U\subset \V$ in terms of a certain  jump process $\project^{\Tmc,\alpha}[U]$.

\begin{proposition}\label{prop:filtering} 
 For every finite $U\subset\V$, $v\in U$, and $j\in\{1,2\}$, there exists a Borel measurable function $\qhat^{\offspring,\alpha}_{v,j}[U]: [0,\infty)\times\Dmc([0,\infty),\stateSS_\extra^U)\rightarrow [0,\infty)$ such that 
 \begin{enumerate}
     \item the function $t\rightarrow \qhat^{\offspring,\alpha}_{v,j}[U](t,x)$ is càglàd\footnote{left continuous with finite right limits at every $t\in[0,\infty)$.} for all $x\in \Dmc([0,\infty),\stateSS_\extra^U)$;
     \item the function $t\rightarrow \qhat^{\offspring,\alpha}_{v,j}[U](t,x)$ is predictable in the sense that for any $t\in[0,\infty)$ and $x,x'\in \Dmc([0,\infty),\stateSS_\extra^U)$, $\qhat^{\offspring,\alpha}_{v,j}[U](t,x)=\qhat^{\offspring,\alpha}_{v,j}[U](t,x')$ whenever $x[t)=x'[t)$;
     \item for every $v\in U$, the stochastic process $t\rightarrow \qhat^{\offspring,\alpha}_{v,j}[U](t,\xi^{\Tmc,\alpha}_U)$ is a modification\footnote{Given two stochastic processes $Y=(Y_t, t\geq0)$ and  $\Yhat=(\Yhat_t, t\geq0)$ defined on the same probability space $(\Omega,\Fmc,\P)$, $Y$ is a modification of $\Yhat$ if for every $t\geq0$, $\P(Y(t)=\Yhat(t))=1$.} of the process $\{\E[ q_\alpha(j, t, \xi^{\Tmc,\alpha}_v,\xi^{\Tmc,\alpha}_{\partial^\V_v}) \ | \ \xi^{\Tmc,\alpha}_U[t)], \ t\in[0,\infty) \}$. 
 \end{enumerate}
    Furthermore, $\law(\xi^{\Tmc,\alpha}_U)=\law(\project^{\Tmc,\alpha}[U])$, where $\project^{\Tmc,\alpha}[U])$ is the  pathwise unique solution  to the following jump SDE \begin{equation}\label{eq:xibar-SDE}
        \project_v^{\Tmc,\alpha}[U](t) =
             z^{p_0}_v + \sum_{j=1,2}\int_{(0,t]\times [0,\infty)} j \one_{\left\{r<\qhat_{v,j}^{\offspring,\alpha}[U]\left(s,\ \project^{\Tmc,\alpha}[U]\right)\right\}} \textbf{N}_v(ds, \ dr), \qquad v\in U, \ t\in[0,\infty),
    \end{equation}
    where  $z^{p_0}$ is a $\stateSS_\extra^\V$-valued  random variable satisfying Assumption \ref{assu:initial_z}, and $\textbf{N}= \set{\textbf{N}_v \ : v \in U  }$ are i.i.d.  Poisson point processes on $(0,\infty) \times [0,\infty)$ with intensity measure equal to Lebesgue measure, independent of $z^{p_0}$.
\end{proposition}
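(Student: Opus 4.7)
The plan is to establish Proposition~\ref{prop:filtering} as a projection/filtering result analogous to \cite[Lemma 9.1]{Ganguly2022nonmarkovian}. First I would construct the functionals $\qhat^{\offspring,\alpha}_{v,j}[U]$ via an optional projection argument. Let $\Fmc^U_t := \sigma(\xi^{\Tmc,\alpha}_U[t))$ denote the left-continuous natural filtration of $\xi^{\Tmc,\alpha}_U$. Because $q_\alpha$ from \eqref{eq:S(E)IR-rate} only evaluates its trajectory arguments at left limits, the process $R^{v,j}_t := q_\alpha(j,t,\xi^{\Tmc,\alpha}_v, \xi^{\Tmc,\alpha}_{\partial^\V_v})$ is predictable for the full filtration generated by $(z^{p_0},\Nbf)$. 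Its predictable projection onto $\{\Fmc^U_t\}$ admits a càglàd modification agreeing a.s.\ with $\E[R^{v,j}_t \mid \Fmc^U_t]$ at each $t$; Doob's functional lemma applied on the Polish space $\Dmc([0,\infty),\stateSS_\extra^U)$ then yields a Borel measurable $\qhat^{\offspring,\alpha}_{v,j}[U]$ satisfying properties (1)--(3).

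Next I would verify that $\xi^{\Tmc,\alpha}_U$ itself solves \eqref{eq:xibar-SDE} for a suitable choice of driving Poisson point processes. From \eqref{eq:xi-SDE}, the direction-$j$ jump counting process $N^{v,j}_t$ of $\xi^{\Tmc,\alpha}_v$ has full-filtration compensator $\int_0^t R^{v,j}_s\, ds$; projecting onto $\Fmc^U_t$ replaces this by $\int_0^t \qhat^{\offspring,\alpha}_{v,j}[U](s,\xi^{\Tmc,\alpha}_U)\, ds$ by construction of $\qhat$. A standard inverse-intensity (Brémaud-type) construction on an enlarged probability space then produces i.i.d.\ Poisson point processes $\tilde\Nbf_v$ on $(0,\infty)\times[0,\infty)$ with Lebesgue intensity, independent of $z^{p_0}$, such that $(\xi^{\Tmc,\alpha}_U,\tilde\Nbf)$ satisfies \eqref{eq:xibar-SDE}. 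Pathwise uniqueness of \eqref{eq:xibar-SDE} then follows from a localization and Gronwall argument: since $U$ is finite and the rates are bounded by Assumption~\ref{assu:beta_lambda_rho}, for two solutions driven by the same data the expected number of discrepant jumps up to time $t$ can be bounded linearly in $t$, with the contribution of neighbors outside $U$ controlled via the finite-dissociability estimates of \cite[Theorem~4.2]{Ganguly2022hydrodynamic}. The equality $\law(\xi^{\Tmc,\alpha}_U) = \law(\project^{\Tmc,\alpha}[U])$ follows from this uniqueness.

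The main obstacle, I expect, is the simultaneous construction of a Borel càglàd functional realizing the conditional-expectation process \emph{and} the $\Fmc^U$-adapted Poisson random measures driving the projected SDE. The first point exploits the fact that $q_\alpha$ depends only on left limits, so the optional and predictable projections agree and admit a càglàd version, but the functional representation on $\Dmc([0,\infty),\stateSS_\extra^U)$ requires a careful measurable selection. The second requires enlarging the probability space and using the inverse-intensity transformation to replace the original $\Nbf_v$ (which is not $\Fmc^U$-measurable, since it encodes outside-$U$ information) with a new Poisson point process $\tilde\Nbf_v$ compatible with the projected intensity. Both steps depend on integrability of $\qhat^{\offspring,\alpha}_{v,j}[U]$ over finite time intervals, which ultimately reduces to the second-moment assumption on $\offspring$ in Assumption~\ref{assu:graph_sequence} and the finite-dissociability of the UGW($\offspring$) tree already exploited in \cite{Ganguly2022hydrodynamic}.
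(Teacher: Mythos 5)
Your outline is a from-scratch reconstruction, whereas the paper does not prove this proposition directly: it invokes Lemma 9.1 and Proposition 9.2 of \cite{Ganguly2022nonmarkovian} for the deterministic $\kappa$-regular case and then extends to the Galton--Watson setting via the change-of-measure result \cite[Corollary 4.11]{Ganguly2022interacting}, deferring details to \cite{Ganguly2023characterization}. Your route---predictable/optional projection of the intensity onto the marginal filtration $\Fmc^U_t$, a measurable-functional (Doob-type) representation on path space, and a thinning/inverse-intensity representation of the projected counting processes---is the same strategy that underlies those cited lemmas, so in substance you are rederiving the cited result rather than using it. What the paper's citation route buys is precisely the passage from deterministic regular trees to random UGW trees: the randomness of $\Tmc$ (encoded in the $\extra$ marks) means the conditioning field contains only partial degree information, and \cite[Corollary 4.11]{Ganguly2022interacting} packages the requisite measurability and integrability through an explicit Radon--Nikodym derivative. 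Your direct projection argument must instead verify by hand that the projected intensity is a.s.\ finite, locally integrable, and admits a simultaneously c\`agl\`ad and Borel path-functional version; you correctly identify this as the main obstacle, but it is asserted rather than carried out, and it is the entire mathematical content of items (1)--(3).

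Two specific corrections. First, your appeal to finite-dissociability estimates from \cite[Theorem 4.2]{Ganguly2022hydrodynamic} in the pathwise-uniqueness step is misplaced: the projected SDE \eqref{eq:xibar-SDE} is a finite-dimensional system whose rates $\qhat^{\offspring,\alpha}_{v,j}[U]$ depend only on the paths $\project^{\Tmc,\alpha}[U]$ of vertices in $U$; no neighbors outside $U$ appear, so uniqueness follows from predictability and local integrability of the rates by an induction on jump times (or an elementary localization), with no reference to the infinite tree. Finite dissociability is needed only for the well-posedness of the original infinite system \eqref{eq:xi-SDE}, which is already Lemma \ref{lem:S(E)IR-exist}. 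Second, note that the conclusion of the proposition is an equality in law, $\law(\xi^{\Tmc,\alpha}_U)=\law(\project^{\Tmc,\alpha}[U])$, so your enlargement-of-the-probability-space construction of $\tilde{\Nbf}$ is one admissible device but not strictly necessary: it suffices to show that $\xi^{\Tmc,\alpha}_U$ solves the martingale problem associated with the projected intensities and that the SDE \eqref{eq:xibar-SDE} is pathwise unique (hence unique in law). Either way the argument is sound in outline; the gap is that the key projection lemma is sketched rather than proved.
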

 \begin{proof} 
 In the case when $\Tmc$ is a deterministic $\kappa$-regular tree, this was proved in Lemma 9.1 and Proposition 9.2 of \cite{Ganguly2022nonmarkovian}; see also 
\cite{Ganguly2023marginal}. Using a general result from 
\cite[Corollary 4.11]{Ganguly2022interacting}, this can be extended 
to a class of Galton-Watson trees that include the ones considered in Assumption \ref{assu:graph_sequence}; 
we refer the reader to \cite{Ganguly2023characterization} for full details.
 \end{proof}
Using Proposition \ref{prop:filtering}, the law of $\xi^{\Tmc,\alpha}_{\{\root,1\}}$ can be characterized in terms of a jump process $\project^{\Tmc,\alpha}[{\set{\root,1}}]$. However, the jump rates of the latter process are {\it a priori} path-dependent and not very explicit. We now identify two additional properties that allow us to simplify the form of these jump rates and thereby show that $\project^{\Tmc,\alpha}[{\set{\root,1}}]$ is in fact a nonlinear Markov process (see Remark \ref{rem-Markov}), that is, a (time-inhomogeneous)  Markov process whose transition rates depend not only on the current state but also on the law of the current state. 

%The proofs of our main results rely on the following statement, which is a proven in greater generality in \cite[Theorem 3.7]{Ganguly2022interacting}, and which we  use in the sequel. For a set $A\subset \V$, we recall that $\partial^\V A := \{v\in \V\setminus A\ : \ \exists w\in a \text{ such that } w\in \partial_v^\V\}$ is its \textit{boundary}, and we define  $\partial^{\V,2} A := \partial^\V A \cup \partial^\V (A\cup \partial^\V A)$ to be its \textit{double boundary}. 
%\begin{lemma}\label{lem:2MRF}\footnote{\red{in the end check if we use this anywhere}}
%For all $A\subset\V$ with $|\partial^{\V,2}A|<\infty $, we have 
%\begin{equation}\label{eq:2MRF}
 %   \xi^{\Tmc,\alpha}_A[t) \ \perp \xi^{\Tmc,\alpha}_{\V\setminus A}[t) \ | \xi^{\Tmc,\alpha}_{\partial^{\V,2} A}[t), \qquad \qquad \forall\  t\in[0,\infty).
%\end{equation} 
%\end{lemma}

%We refer to \eqref{eq:2MRF} as the 2-Markov Random Field property of the trajectories of the S(E)IR process, or, in short, the 2-MRF property. 
 For a set $B\subset \V$, we let $S_B$ denote the vector in $\stateSS_\extra^B$ whose every coordinate is equal to $S$.
\begin{proposition}\label{prop:cond_ind_prop}
    For every $t\in[0,\infty)$, $A\subset \V$ with $|\partial^\V A| < \infty$, and $B\subset \V\setminus A$,
     \begin{equation}
        \xi_A^{\Tmc,\alpha}[t) \perp \xi_B^{\Tmc,\alpha}[t) | \xi^{\Tmc,\alpha}_{\partial^\V A}(t-)=S_{\partial^\V A}.
    \end{equation}
    Moreover, for every $v\in \V$, the processes  $\xi^{\Tmc,\alpha} _{vi}[t)$, $ 1\leq i \leq d_v-\one_{\{v\neq\root\}}$ are conditionally independent given  $\xi^{\Tmc,\alpha}_v(t-)=S$ and the degree $d_v$ of $v$.  
\end{proposition}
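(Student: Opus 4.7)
The plan is to leverage the monotonicity \eqref{eq:mono} to convert the pointwise conditioning at time $t-$ into a trajectory-level condition, and then to use the explicit Poisson representation \eqref{eq:xi-SDE} to decouple the dynamics on either side of the boundary. Set $E := \{\xi_{\partial^\V A}^{\Tmc,\alpha}(t-) = S_{\partial^\V A}\}$. Since $S$ is the minimal element of $\stateSS$ and each $\xi_v^{\Tmc,\alpha}$ is non-decreasing by \eqref{eq:mono}, $E$ coincides (up to a null set) with $E' := \{\xi_w^{\Tmc,\alpha}(s) = S \text{ for all } w \in \partial^\V A,\ s \in [0,t)\}$. Set $B' := \V \setminus (A \cup \partial^\V A)$; because $\Tmc$ is a tree, removing $\partial^\V A$ separates $A$ from $B'$, so no edge of $\Tmc$ connects $A$ to $B'$, and any $v \in B'$ has all its neighbors in $B' \cup \partial^\V A$.

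Next, I would introduce two auxiliary processes built from disjoint noise sources. Let $\tilde\xi^{(1)}$ be the unique solution to \eqref{eq:xi-SDE} on $A$ in which every $w \in \partial^\V A$ is frozen at $S$ throughout $[0,t)$, driven by $\{z^{p_0}_v, \textbf{N}_v\}_{v \in A}$, and define $\tilde\xi^{(2)}$ analogously on $B'$ from $\{z^{p_0}_v, \textbf{N}_v\}_{v \in B'}$. By construction, $\tilde\xi^{(1)}$ and $\tilde\xi^{(2)}$ are unconditionally independent. Because the rate $q_\alpha(\cdot, s, \xi_v^{\Tmc,\alpha}, \xi_{\partial^\V_v}^{\Tmc,\alpha})$ for $v \in A$ only involves $\xi^{\Tmc,\alpha}$ restricted to $A \cup \partial^\V A$, on $E'$ the SDE \eqref{eq:xi-SDE} for $\xi_A^{\Tmc,\alpha}$ reduces exactly to the one defining $\tilde\xi^{(1)}$; using the uniqueness part of Lemma \ref{lem:S(E)IR-exist} we obtain $\xi_A^{\Tmc,\alpha}[t) = \tilde\xi^{(1)}[t)$ on $E'$, and similarly $\xi_{B'}^{\Tmc,\alpha}[t) = \tilde\xi^{(2)}[t)$ on $E'$.

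The central step is to show that $\P(E' \mid \tilde\xi^{(1)}, \tilde\xi^{(2)})$ factorizes as $h_A(\tilde\xi^{(1)}) \, h_{B'}(\tilde\xi^{(2)})$. For each $w \in \partial^\V A$, the event $\{\xi_w^{\Tmc,\alpha}[t) \equiv S\}$ is equivalent to $z^{p_0}_w = S$ together with $\textbf{N}_w$ having no atom below the rate curve $s \mapsto \beta_s \infects(\xi_{\partial^\V_w}^{\Tmc,\alpha}(s-))$. Since neighbors of $w$ inside $\partial^\V A$ contribute zero on $E'$, and the remaining neighbor trajectories agree with restrictions of $\tilde\xi^{(1)}$ or $\tilde\xi^{(2)}$, the Poisson void probability equals $\exp\!\big(-\int_0^t \beta_s \sum_{v \in \partial^\V_w \cap (A \cup B')} \one\{\xi_v^{\Tmc,\alpha}(s-) = I\}\, ds\big)$, which splits as a product of an $A$-factor and a $B'$-factor. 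Taking the product over $w \in \partial^\V A$ and using the independence of $\{(z^{p_0}_w, \textbf{N}_w)\}_{w \in \partial^\V A}$ among themselves and from $(\tilde\xi^{(1)}, \tilde\xi^{(2)})$ then yields the desired factorization.

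Combining the product form with the unconditional independence of $\tilde\xi^{(1)}$ and $\tilde\xi^{(2)}$ via Bayes' rule gives $\tilde\xi^{(1)} \perp \tilde\xi^{(2)} \mid E'$, hence $\xi_A^{\Tmc,\alpha}[t) \perp \xi_{B'}^{\Tmc,\alpha}[t) \mid E$. Because $\xi_{B \cap \partial^\V A}^{\Tmc,\alpha}[t) \equiv S$ is deterministic on $E$, the first assertion extends to arbitrary $B \subset \V \setminus A$. For the second assertion, I would apply the same construction to each subtree $\Tmc_{vi}$ rooted at a child $vi$ of $v$ (noting that, conditionally on $d_v$, the subtrees are i.i.d. UGW$(\sizeb)$ trees): each has boundary $\{v\}$ in $\Tmc$, so the factorization of $\P(\{\xi_v \equiv S\} \mid \cdot)$ splits over all of $v$'s neighbor directions, including $v$'s parent side when $v \neq \root$, which yields mutual conditional independence of the $d_v - \one_{\{v \neq \root\}}$ subtree processes given $\xi_v^{\Tmc,\alpha}(t-) = S$ and $d_v$. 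I expect the main obstacle to be justifying the product form of $\P(E' \mid \tilde\xi^{(1)}, \tilde\xi^{(2)})$, where care is needed to handle boundary vertices that are adjacent to each other in $\Tmc$; this is resolved by the fact that on $E'$ every such adjacency contributes zero to the relevant infection rate.
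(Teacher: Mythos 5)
Your route is genuinely different from the paper's. The paper never works directly with the SDE: it invokes a change-of-measure result (Proposition \ref{prop:derivatives}) giving a reference process $\xihat^n$ that is already conditionally independent across the boundary given the full boundary \emph{trajectory}, together with an explicit Radon--Nikodym derivative written as a product over vertices; the whole proof then reduces to showing that on the event that every boundary vertex is still in state $S$ at time $t-$ this density factors into an $(A\cup\partial^\V A)$-measurable term times a $(B\cup\partial^\V A)$-measurable term (Lemma \ref{lemm:density_factors}). That factorization rests on exactly the two observations you isolate: monotonicity kills all boundary jump terms, and the susceptible rate is additive over neighbors, so the exponential splits. Your frozen-boundary auxiliary processes and Poisson void probabilities are the ``graphical representation'' version of the same mechanism; the trade-off is that you avoid the external machinery but take on several pathwise identification steps (that $\xi^{\Tmc,\alpha}_A[t)=\tilde\xi^{(1)}[t)$ on $E'$, and that $E'$ equals the event defined through the auxiliary rate curves), each of which needs an induction over jump times and a well-posedness statement for the frozen-boundary SDE on a possibly infinite vertex set.

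There is one concrete flaw: the claim that $\{(z^{p_0}_w,\Nbf_w)\}_{w\in\partial^\V A}$ is independent of $(\tilde\xi^{(1)},\tilde\xi^{(2)})$ is false. The clocks $\Nbf_w$ and the i.i.d.\ marks $z^{(1)}_w$ are independent of everything else, but $z^{p_0}_w$ also encodes whether $w\in\Tmc$, and under Assumption \ref{assu:initial_z} tree membership is \emph{not} independent across vertices: if $\pi_w\in A$, then $\one_{\{w\in\Tmc\}}$ is determined by the number of children of $\pi_w$, which simultaneously determines which siblings of $w$ inside $A$ lie in $\Tmc$ and is therefore correlated with $\tilde\xi^{(1)}$. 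So the product form of $\P(E'\mid\tilde\xi^{(1)},\tilde\xi^{(2)})$ does not follow from independence alone. The factorization is still true, but for a structural reason you must make explicit: since $\V$ is a tree, for each $w\in\partial^\V A$ the pair $\{w,\pi_w\}$ and the children of $w$ lie entirely in $A\cup\partial^\V A$ or entirely in $B\cup\partial^\V A$, so the tree-consistency constraints attached to boundary vertices split into an $A$-side and a $B$-side factor --- this is precisely the role of the $\psi_v$ terms in \eqref{eq:fact_psi_w}--\eqref{eq:fact_psi_root} of the paper's proof. The same caveat applies to your sketch of the second assertion: conditioning on $d_v$ couples the tree membership of all the children $vi$, and the paper handles this by rewriting $\{d_v=\kappa\}$ as an event about $\{\xi_{vm}(0)\}_{m\in\N}$ and iterating the first assertion, rather than by asserting i.i.d.\ subtrees up front.
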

The proof of Proposition \ref{prop:cond_ind_prop} is given in Section \ref{sec:cond_ind_proof}.

    \begin{proposition} \label{prop:symmetry}
For every $b\in\stateSS$, $\alpha\in[0,1]$ and  $t\in[0,\infty)$, the conditional probability
       $\P(\xi^{\Tmc,\alpha}_{\vtil m}(t-)= b\ | \ \xi^{\Tmc,\alpha}_{\vtil}(t-)=S, \ \vtil\in\Tmc)$ does not depend on the choice of $\vtil\in\V$ and $m\in\N$.
    \end{proposition}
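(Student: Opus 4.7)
The plan is to combine the conditional independence property of Proposition \ref{prop:cond_ind_prop}, the monotonicity \eqref{eq:mono} of the S(E)IR dynamics, and a structural symmetry of UGW trees below any fixed vertex. First, fix $\vtil\in\V$ and $m\in\N$, and let $A_{\vtil m}:=\{\vtil m\}\cup\{\vtil m u : u\in\V\setminus\{\root\}\}$ consist of $\vtil m$ together with all of its strict $\V$-descendants. Since $\vtil m$ has parent $\vtil\notin A_{\vtil m}$ while every other vertex in $A_{\vtil m}$ has both its parent and all of its children inside $A_{\vtil m}$, we have $|\partial^\V A_{\vtil m}|=|\{\vtil\}|=1<\infty$. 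Applying Proposition \ref{prop:cond_ind_prop} with this $A_{\vtil m}$ and $B:=\V\setminus(A_{\vtil m}\cup\{\vtil\})$ yields
\begin{equation*}
    \xi^{\Tmc,\alpha}_{A_{\vtil m}}[t)\ \perp\ \xi^{\Tmc,\alpha}_{B}[t)\ \big|\ \xi^{\Tmc,\alpha}_{\vtil}(t-)=S,
\end{equation*}
so the conditional law of $\xi^{\Tmc,\alpha}_{\vtil m}(t-)$ given $\{\xi^{\Tmc,\alpha}_{\vtil}(t-)=S,\ \vtil\in\Tmc\}$ depends only on the joint evolution of the vertices in $A_{\vtil m}$.

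Next, I would use monotonicity \eqref{eq:mono} to observe that on the event $\{\xi^{\Tmc,\alpha}_{\vtil}(t-)=S\}$ the vertex $\vtil$ is in state $S$ throughout $[0,t)$, so $\vtil$ contributes $0$ to $\infects(\xi^{\Tmc,\alpha}_{\partial^\V v}(s-))$ for every neighbour $v$ of $\vtil$ and every $s\le t$. Combined with the SDE representation \eqref{eq:xi-SDE} and the fact that the driving Poisson processes $\{\Nbf_v\}_{v\in\V}$ and initial states $\{z^{p_0}_v\}_{v\in\V}$ from Assumption \ref{assu:initial_z} are i.i.d.\ across $\V$, this would show that, conditionally on this event, the restriction of $\xi^{\Tmc,\alpha}$ to $A_{\vtil m}\cap\Tmc$ solves in law the jump SDE of the S(E)IR process on the isolated rooted sub-tree $A_{\vtil m}\cap\Tmc$, with root $\vtil m$, rates $\beta,\lambda,\rho$, and initial distribution $p_0$.

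The third ingredient is the structural symmetry of the UGW tree. Since $\vtil m$ is a non-root vertex of $\V$ regardless of whether $\vtil=\root$ or $\vtil\neq\root$, Definition \ref{def:UGW} ensures that, conditionally on $\{\vtil m\in\Tmc\}$, the sub-tree $A_{\vtil m}\cap\Tmc$ viewed as a rooted tree with root $\vtil m$ is distributed as a Galton--Watson tree in which every vertex, including its root, has offspring distribution $\sizeb$. Putting the three steps together identifies the sought-after conditional law with the law at time $t-$ of the root marginal of an S(E)IR process driven by the same rates and initial distribution $p_0$ and run on an independent $\sizeb$-Galton--Watson tree. Because this description involves neither $\vtil$ nor $m$, the invariance claimed in Proposition \ref{prop:symmetry} follows.

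I expect the main technical obstacle to be the second step: promoting the conditional independence of trajectories supplied by Proposition \ref{prop:cond_ind_prop} to the functional statement that the restriction of $\xi^{\Tmc,\alpha}$ to $A_{\vtil m}\cap\Tmc$ obeys an autonomous S(E)IR SDE on an isolated rooted sub-tree. Morally this holds because, on the conditioning event, the only external contribution to the jump rates of vertices in $A_{\vtil m}$ comes from $\vtil$, and that contribution vanishes since $\infects(S)=0$; formalising it should proceed by inspecting the rate functions in \eqref{eq:xi-SDE} and appealing to pathwise uniqueness of the SDE on the isolated sub-tree to match laws.
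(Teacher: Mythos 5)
Your first and third steps are sound: applying Proposition \ref{prop:cond_ind_prop} with $A=\T_{\vtil m}$ and $\partial^\V A=\{\vtil\}$ correctly isolates the child's subtree, and the subtree of $\Tmc$ below any non-root vertex of $\V$ is indeed a Galton--Watson tree in which every vertex has offspring law $\sizeb$. The gap is in your second step, and it is fatal as stated: conditioning on $\{\xi^{\Tmc,\alpha}_{\vtil}(t-)=S\}$ does \emph{not} leave the law of $\xi^{\Tmc,\alpha}_{A_{\vtil m}}[t)$ equal to that of the S(E)IR process on the isolated subtree. It is true that, pathwise on this event, the restricted process satisfies the isolated-subtree SDE, since the parent contributes nothing to the infection rates; but the conditioning event is itself correlated with the child's trajectory, because the longer $\vtil m$ spends in state $I$ the less likely $\vtil$ is to remain susceptible. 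The conditional law is therefore a tilted version of the isolated-subtree law, biased toward trajectories of $\vtil m$ that spend little time infected. You can see this already on the two-vertex graph $\{u,v\}$ with SIR dynamics and constant rates: $\P(\xi_v(t)=I,\ \xi_u(t)=S)=s_0\,i_0\,e^{-(\beta+\rho)t}$, so the conditional probability of $\{\xi_v(t)=I\}$ carries the decay rate $\beta+\rho$ in its numerator rather than the rate $\rho$ of the isolated vertex. The same tilt is visible in the paper's own machinery: in the factorization of Lemma \ref{lemm:density_factors} the factor $\ftil_1$ carried by the $A$-side retains an exponential survival term depending on the time the vertices of $\partial^\V_{\vtil}\cap A$ spend infected, and in the ODE for $f_I$ the term $-f_I\beta(1-f_I)$ is precisely the footprint of this conditioning; your identification would erase it. Appealing to pathwise uniqueness of the isolated-subtree SDE cannot repair this, since the issue is the change of measure induced by the conditioning, not well-posedness.

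The invariance you want is still true, but the correct route is to avoid computing the conditional law altogether and instead show that it is preserved under graph isomorphisms. This is what the paper does: it maps the subtree $\T_{\vtil}$ rooted at $\vtil$ onto $(\V,\root)$ by a root-preserving isomorphism that exchanges the $m$-th and first children, observes that both the process (via the well-posedness of the SDE \eqref{eq:xi-SDE}, the i.i.d.\ structure of the driving noises and initial marks, and the conditional independence supplied by Proposition \ref{prop:cond_ind_prop}) and the conditioning event $\{\xi^{\Tmc,\alpha}_{\vtil}(t-)=S\}$ are transported by this map, and concludes that $\law(\xi^{\Tmc,\alpha}_{\vtil m}[t)\mid \xi^{\Tmc,\alpha}_{\vtil}(t-)=S,\ \vtil m\in\Tmc)$ equals $\law(\xi^{\Tmc,\alpha}_{1}[t)\mid \xi^{\Tmc,\alpha}_{\root}(t-)=S,\ 1\in\Tmc)$ --- tilt and all --- without ever identifying what that law is.
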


The proof of Proposition \ref{prop:symmetry} proceeds by exploiting the conditional independence property in Proposition \ref{prop:cond_ind_prop}  along with symmetry properties and well-posedness  of the SDE \eqref{eq:xi-SDE} to show that $\law(\xi^{\Tmc,\alpha}_{\vtil m}[t) | \xi^{\Tmc,\alpha}_{\vtil}(t-)=S)=\law(\xi^{\Tmc,\alpha}_{1}[t) | \xi^{\Tmc,\alpha}_{\root}(t-)=S)$ for all $\vtil\in\V$ and $m\in\N$. The details are relegated to the end of Section \ref{sec:cond_ind_proof}.

We conclude this section with an elementary result we use repeatedly in the sequel. %Its proof is left to the reader.

\begin{lemma}\label{lem:cond-identity-lemma}
   Let $(\Omega, \ \Fmc,\ \P)$ be a probability space, and suppose that  $A,$ $ A',$ $ B,$ $ B'\in \Fmc$ with $\P(B\cap B')>0$ and $\P(A'\cap B')>0$. Then, 
\begin{align}
    \begin{split}
        \label{eq:cond_prob_fact}
    \P(A\cap A' \ | \   B\cap B')= \P(A'\ | \  B')\frac{\P(A\cap B\ | \   A'\cap B')}{\P(B\ | \  B')}. 
    \end{split}
\end{align}

\end{lemma}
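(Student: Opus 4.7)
The plan is to prove the identity by direct computation from the definition of conditional probability. Both sides of the claimed equality are ratios of probabilities of intersections of the four given events, and a single unfolding should suffice to match them.

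First I would write out both sides as ratios involving only unconditional probabilities. Concretely, the left-hand side is
\begin{equation*}
\P(A \cap A' \mid B \cap B') = \frac{\P(A \cap A' \cap B \cap B')}{\P(B \cap B')},
\end{equation*}
which is well-defined since $\P(B \cap B') > 0$ by hypothesis. For the right-hand side, I would expand each of the three factors using the definition of conditional probability. Note that $\P(B') \ge \P(B \cap B') > 0$ and $\P(A' \cap B') > 0$ by hypothesis, so all denominators are positive and the conditional probabilities on the right are well-defined.

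Then it is just a matter of substituting and simplifying:
\begin{equation*}
\P(A' \mid B') \cdot \frac{\P(A \cap B \mid A' \cap B')}{\P(B \mid B')}
= \frac{\P(A' \cap B')}{\P(B')} \cdot \frac{\P(A \cap B \cap A' \cap B') / \P(A' \cap B')}{\P(B \cap B') / \P(B')}.
\end{equation*}
The factors $\P(B')$ and $\P(A' \cap B')$ cancel between numerator and denominator, leaving $\P(A \cap A' \cap B \cap B') / \P(B \cap B')$, which is precisely the left-hand side.

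There is no real obstacle here. The only care required is to record, before the computation, that the positivity assumptions $\P(B \cap B') > 0$ and $\P(A' \cap B') > 0$ plus the trivial inequality $\P(B') \ge \P(A' \cap B') > 0$ are exactly what guarantees all three conditional probabilities appearing on the right-hand side are defined and that no division by zero is performed in the cancellation. With that observation in place, the identity follows from a one-line algebraic manipulation.
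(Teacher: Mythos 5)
Your proof is correct and follows essentially the same route as the paper's: both unfold the definition of conditional probability and cancel the common factors $\P(B')$ and $\P(A'\cap B')$, the only cosmetic difference being that you expand the right-hand side down to the left-hand side while the paper builds up from the left. Your explicit remark that $\P(B')\geq\P(B\cap B')>0$ guarantees all denominators are positive is a small but welcome addition.
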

\begin{proof}
Let $A,$ $ A',$ $ B,$ $ B'$ be as in the statement of the lemma. By the definition of conditional probability, and some simple arithmetic manipulation, 
\begin{align}
    \begin{split}
    \P(A\cap A' \ | \   B\cap B')&= 
    \frac{\P(A\cap A'\cap B\cap B')}{\P(B\cap B')}
    \\&=\frac{\P(A\cap B \ | \   A' \cap B')\P(A'\cap B')}{\P(B\ | \  B')\P(B')}
    \\ &=\P(A'\ | \  B')\frac{\P(A\cap B\ | \   A'\cap B')}{\P(B\ | \  B')}. 
    \end{split}
    \end{align}
\end{proof}

\subsubsection{Proof of Theorem \ref{thm:MAIN_SIR}} \label{sec:proof_odes}
We can now complete the proof of our main result for the SIR process by characterizing the time marginals of $\xi_\root^{\Tmc,\alpha}$ for the special case $\alpha=0$, which by Remark \ref{rmk:SIR/SEIR-S(E)IR} is equal to the marginal at the root of the SIR process on the possibly infinite tree $\Tmc$. 
For $a,b\in\stateS$, $t\in [0,\infty)$ and $k\in\{m\in\natzero\ : \ \offspring(m)>0\}$,  define 
\begin{equation}\label{eq:probs}
    \begin{alignedat}{2}
     &P^\offspring_{a,b;k}(t)& &:=\P(X^\Tmc_\root(t)=b \ |\  X^\Tmc_\root(0)=a,\ d_\root=k),
    \\ & P^\offspring_{a,b}(t)& &:=\P(X^\Tmc_\root(t)=b \ |\  X^\Tmc_\root(0)=a),
  \\ &f^\offspring_a(t)& &:= \P(X^\Tmc_1(t)=a \ | \ X^\Tmc_\root(t)=S,\ 1\in \Tmc  ),
    \end{alignedat}
\end{equation}
where $d_v$ is the degree of the vertex $v\in\Tmc$, and where we recall that $1\in\Tmc$ is equivalent to $X_1^\Tmc(0)\neq\extra$. When clear from context, we omit the dependence on $\offspring$ and simply write $P_{a,b;k}, \ P_{a,b}$ and $f_a$.
 
        \begin{theorem}\label{thm:SIR-fs}
            Let $f_S$ and $f_I$ be as in \eqref{eq:probs}, and set $F_I(t):=\int_0^t\beta_s f_I(s)ds$  for $t\in[0,\infty)$. Then, $(f_S$, $f_I$, $F_I)$ solves the ODE system \eqref{eq:f_If_SF_I}-\eqref{eq:ffF-initial}.
        \end{theorem}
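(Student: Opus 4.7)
The plan is to apply the projection lemma (Proposition~\ref{prop:filtering}) with $U = \{\root, 1\}$ to realize the joint dynamics of $(X_\root^\Tmc, X_1^\Tmc)$ on the event $\{1 \in \Tmc\}$ as a Markov jump process, to compute its transition rates explicitly using the conditional independence (Proposition~\ref{prop:cond_ind_prop}) and symmetry (Proposition~\ref{prop:symmetry}), and to extract the ODEs for $f_S$ and $f_I$ from the corresponding forward Kolmogorov equations via the quotient rule. The Markov property itself follows because, at any time $t$, the conditional expectation defining the jump rate --- for instance $\E[\infects(X_{\partial^\V_\root}(t-)) \mid X_{\{\root, 1\}}[t)]$ --- reduces by Proposition~\ref{prop:cond_ind_prop} to a function only of $(X_\root(t-), X_1(t-))$. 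The equation $\dot F_I = \beta_t f_I(t)$, with $F_I(0) = 0$, is then immediate from the definition of $F_I$.

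The centerpiece of the computation is the identity
\[
\phi(t) \;:=\; \P(X_\root(t) = S \mid X_\root(0) = S,\, d_\root = 1) \;=\; e^{-F_I(t)}.
\]
By monotonicity (Remark~\ref{rmk:monotonicty}), conditioning on $\{X_\root(t) = S\}$ is equivalent to conditioning on $X_\root$ being $S$ throughout $[0, t]$; under this pinning Proposition~\ref{prop:cond_ind_prop} implies that the $\ell$ child subtrees of $\root$ are conditionally independent null processes, each a size-biased Galton--Watson subtree with its parent pinned to~$S$. Hence $\P(X_\root(t) = S \mid X_\root(0) = S, d_\root = \ell) = \phi(t)^\ell$. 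Differentiating $\phi$ and using Proposition~\ref{prop:symmetry} together with Proposition~\ref{prop:cond_ind_prop} --- which jointly yield $\P(X_1(t) = I \mid X_\root(t) = S, d_\root = \ell, 1 \in \Tmc) = f_I(t)$ for every $\ell \geq 1$, because under $\{X_\root \equiv S\}$ the trajectory of $X_1$ depends only on vertex~$1$'s own subtree --- produces the ODE $\dot\phi = -\beta_t f_I(t)\, \phi$, which integrates to $\phi = e^{-F_I}$. The same reasoning applied to vertex~$1$ in the null process gives the analogous identity that the probability of vertex~$1$ being susceptible conditional on $d_1 - 1 = k$ equals $e^{-k F_I(t)}$.

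Combining these factorizations through Bayes' rule gives
\[
\P(d_1 - 1 = k \mid X_\root(t) = S,\, X_1(t) = S,\, 1 \in \Tmc) \;=\; \frac{\sizeb(k)\, e^{-k F_I(t)}}{M_{\sizeb}(-F_I(t))},
\]
whose mean $M'_{\sizeb}(-F_I)/M_{\sizeb}(-F_I)$ is precisely the ratio appearing in~\eqref{eq:f_If_SF_I}. A parallel Bayes argument shows that the conditional distribution of $d_\root$ given $\{X_\root(t) = S, X_1(t) = b, 1 \in \Tmc\}$ does not depend on $b \in \{S, I\}$, reflecting the conditional independence of the subtree of~$1$ from the other subtrees of $\root$ under the pinning $\{X_\root \equiv S\}$. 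Writing the forward Kolmogorov equations for $p_{Sb}(t) := \P(X_\root = S, X_1 = b \mid 1 \in \Tmc)$ and $p_S(t) := \P(X_\root = S \mid 1 \in \Tmc)$, applying the quotient rule to $f_b = p_{Sb}/p_S$, and using the above $b$-independence to simplify the equation for $\dot f_I$ delivers system~\eqref{eq:f_If_SF_I}; the initial conditions~\eqref{eq:ffF-initial} follow from the i.i.d.\ initial distribution with $\P(X_v(0) = S) = s_0$.

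The main obstacle is the seeming circularity in the identity $\phi(t) = e^{-F_I(t)}$: $F_I$ is defined through $f_I$, yet $f_I$ is itself a conditional probability whose conditioning event $\{X_\root = S\}$ has probability governed by $\phi$. This is resolved by noting that the ODE for $\phi$ obtained by differentiation depends only on $f_I$ (and not on $F_I$), so $\phi(t) = \exp(-\int_0^t \beta_s f_I(s)\,ds)$ follows by direct integration, and this matches $e^{-F_I(t)}$ by the definition of $F_I$.
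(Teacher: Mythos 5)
Your proposal is correct and follows essentially the same route as the paper's proof: project onto $U=\{\root,1\}$ via Proposition \ref{prop:filtering}, use Propositions \ref{prop:cond_ind_prop} and \ref{prop:symmetry} to reduce the jump rates of the two-vertex process to functions of $(X_\root(t-),X_1(t-))$ and of the conditional degree distribution of vertex $1$, and extract the ODEs; your survival-probability factorization $e^{-kF_I(t)}$ plus Bayes is the same computation as the paper's ODE for $\condi_k(t):=\P(X_1(t)=S\mid 1\in\Tmc,\ d_1=k+1)$, and your forward-Kolmogorov-plus-quotient-rule step matches the paper's increment argument via Lemma \ref{lem:cond-identity-lemma}. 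The only slip is that the susceptibility probability of vertex $1$ given $d_1-1=k$ equals $s_0e^{-kF_I(t)}$ times a $k$-independent factor accounting for infection pressure from the root rather than $e^{-kF_I(t)}$ itself, but since that factor cancels in the Bayes normalization your conclusion is unaffected.
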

        \begin{proof}
     Throughout the proof, in order to simplify the notation we write $X$ in lieu of $X^{\Tmc}=\xi^{\Tmc,0}$, the SIR process on $\Tmc$, and $q$ in lieu of $q_{0}$ for the jump rates defined in \eqref{eq:S(E)IR-rate}. We start by observing that, by Assumption \ref{assu:initial_z}, $f_S(0)=s_0$ and $f_I(0)=i_0=1-s_0$. Since, clearly $F_I(0)=0$, the initial condition \eqref{eq:ffF-initial} are  established.  
By the fundamental theorem of calculus, $\dot F_I(t) =\beta_t f_I(t)$, which is the third equation in \eqref{eq:f_If_SF_I}.

       We now turn to the derivation of the evolution of $f_I$ and $ f_S$. This requires us to simultaneously track the evolution of two nodes, $\root$ and $1$,   since $f_I(t)$ and $ f_S(t)$ are conditional probabilities associate with the joint law of $X_\root(t)$ and $X_1(t)$. 
       To start with, we apply the projection result of Proposition \ref{prop:filtering}, with $\alpha  = 0$ and $U = \{\root, 1\}$, to conclude that the joint marginal 
       $X_{\root,1}$ has the same law as the jump process 
       $\project^{\Tmc,\alpha}[\{\root,1\}]$ on $\stateS_\extra\times\stateS_\extra$ 
        that has predictable 
       jump rates \begin{equation}\label{eq:qhat-def}
           \hat{q}_{v}(t,x) := \hat{q}_{v,j(x_v)}^{\offspring,0}[\{\root,1\}](t,x),
       \end{equation} $v \in \{\root, 1\}$, $x \in \Dmc([0,\infty),\stateS_\extra^2)$ and $j(x_v)=1+\one_{\{x_v(t-)=S\}}$, 
       which satisfy,  for every $t \geq 0,$ almost surely\footnote{The dependence $j(x_v)$ 
            of the allowed jump  on the state is a notational nuisance that is a mere artifact of our using a common framework to analyze both the SIR and SEIR processes. Indeed, this is because when we use the common (ordered) state space $\{S,E,I,R\}$ 
            for both processes, then the SIR process allows 
            only jumps of size $2$ from the state S (going from S to I and skipping over E), and only jumps of size $1$ from the  state I (going from I to R).} 
       \begin{equation}\label{eq:qhat}
            \hat{q}_{v}(t,X_{\root, 1}) = \E[q(j(X_v),t, X_v, X_{\partial_v^{\mathbb{V}}})|X_{\root, 1}[t)], \quad v \in \{\root, 1\}. 
\end{equation}

       Next, we use the specific form of $q$, as defined in \eqref{eq:S(E)IR-rate} and Propositions \ref{prop:cond_ind_prop} and \ref{prop:symmetry} to obtain a more explicit description of $\qhat_v$, $v\in\{\root,1\}$. Since the probabilities $f_I(t)$ and $f_S(t)$ are conditioned on $X_\root(t)=S$ and on $X_1(t)\neq\extra$ (and using the fact that a particle that is in state $R$ remains in that state for all subsequent times), we only need to consider the jump intensities $\qhat_v(t,X_{\root,1})$, $v\in\{\root,1\}$,
       on the events $\{X_{\root,1}(t-)=(S,S)\}$ and $\{X_{\root,1}(t-)=(S,I)\}$.
       
Define $B_1(t):=\beta_t\E[\infects(X_{\partial^\V_1\setminus\{\root\}}(t-)) | X_1(t-)=S]$. Recalling the definition of $q=q_0$ from \eqref{eq:S(E)IR-rate}, $B_1(t)$ is the cumulative conditional rate at which the children of $1$ infect $1$ at time $t$, given $X_1(t-)=S$ (which also implies $1\in\Tmc$). Similarly, let $B_{\root}(t):=\beta_t\E[\infects(X_{\partial^\V_\root\setminus\{ 1 \}}(t-)) | X_\root(t-)=S]$ be the cumulative conditional rate at which the neighbors of the root other than vertex $1$ infect the root at time $t$, given $X_\root(t-)=S$. By  Proposition \ref{prop:cond_ind_prop},  for $v,w\in\{\root,1\}$ with $w\neq v$,
\begin{equation}\label{eq:B_v}
    B_v(t)= \beta_t\E[\infects(X_{\partial^\V_v\setminus\{ w\}}(t-)) | X_v(t-)=S, \ X_w(t-)].
\end{equation}

Using \eqref{eq:S(E)IR-rate} and \eqref{eq:B_v}, on the event $\{X_\root(t-)=S\}$, $$\qhat_\root(t,X_{\root,1})= \beta_t\one_{\{X_1(t-)=I\}}+B_\root(t).$$ Similarly, on the event $\{X_\root(t-)=S\}$, $$\qhat_1(t,X_{\root,1})=B_1(t)\one_{\{X_1(t-)=S\}}+\rho_t \one_{\{X_1(t-)=I\}}.$$ Therefore, we can treat $X_{\root,1}$ as a two particle jump processes driven by Poisson noises with intensity measure equal to Lebesgue measure, whose  jumps and jump rates from the states $(S,S)$ and $(S,I)$ can be summarized as follows:
    \begin{align*}
        &\text{Jump: }  & 
  &\text{Rate at time $t$:}
  \\       (S,S) &\rightarrow (S,I)  & 
  & B_1(t)
  \\  (S,S) &\rightarrow (I,S)  & 
  & B_{\root}(t)
  \\ (S,I) &\rightarrow (I,I)  & 
  &{\beta_t} + B_{\root}(t)
  \\ (S,I) &\rightarrow (S,R)  & 
  &{\rho_t},
       \end{align*}
with all other jump rates being equal to zero.
       Next, we fix $h>0$ and  we obtain expressions for $f_I(t+h),\ f_S(t+h)$ in terms of $f_I(t),$ $ f_S(t), $ $ h,$ $  {\beta_t},$ $ {\rho_t}$, and $\sizeb$.
We first consider $f_S(t+h)=\P(X_1(t+h)=S\ | \ X_\root(t+h)=S, \ 1\in\Tmc)$ defined in \eqref{eq:probs}. Using the monotonicity of the SIR dynamics, we can write 
\begin{equation} \label{eq:proof_fs_f_S}
    f_S(t+h)=\P(X_1(t+h)= S,\ X_1(t)= S \ | \ X_\root(t+h)=S,\ X_\root(t)=S,\ 1\in\Tmc ).
\end{equation}
By an application of Lemma \ref{lem:cond-identity-lemma}, with $A=\{X_1(t+h)=S\} $, $A'=\{X_1(t)=S\}$, $B=\{X_\root(t+h)=S\}$ and $B'=\{X_\root(t)=S, 1 \in\Tmc\}$, we obtain 
    \begin{equation}\label{eq:f_S}
        f_S(t+h)= f_S(t)\frac{\P(X_\root(t+h)=S,\ X_1(t+h)=S,  | \ X_\root(t)=S, \ X_1(t)=S,  \ 1\in\Tmc)}{\P(X_\root(t+h)=S\ | \ X_\root(t)=S, \ 1\in\Tmc)}.
    \end{equation}
    Since $B_1(t)+B_\root(t)$ is the rate at which $X_{\root,1}(t)$ leaves the state $(S,S)$, the numerator in  the right-hand side of \eqref{eq:f_S} is equal to $1-h(B_1(t)-B_\root(t))+o(h)$. For the denominator, observe that the rate $\qhat_\root(t,X_{\root,1})$ on the event $\{X_\root(t-)=S,\  X_1(t-)\neq \extra\}=\{X_\root(t-)=S, \ 1\in\Tmc\}$ is equal to
    \begin{align*}
        \begin{split}
   & \E[q(1,t,X_\root,X_{\partial^\V_\root}) \ | X_\root(t-)=S, 1\in\Tmc ] 
    \\ = &  \beta_t\E[\infects(X_1(t-)) \ | X_\root(t-)=S, \ 1\in\Tmc ] + \beta_t\E[\infects(X_{\partial^\V_\root\setminus\{1\}}(t-)) \ | X_\root(t-)=S, \ 1\in\Tmc ] 
    \\ = & \beta_t f_I(t) + B_\root(t),
        \end{split}
    \end{align*}
    where the first equality follows from \eqref{eq:S(E)IR-rate} with $\alpha=0$, and the second follows from the definition of $f_I$ in \eqref{eq:probs} and by \eqref{eq:B_v} (on observing that the event $\{1\in\Tmc\}$ is $X_1(t)$-measurable). Therefore, it follows that
\begin{equation}
        f_S(t+h)= f_S(t)\frac{1-h(B_1(t)+B_{\root}(t))+o(h)}{1-h({\beta_t} f_I(t)+B_{\root}(t))+o(h)},
    \end{equation}
which implies that
\begin{align} \label{eq:proof_fs_endf_S}
    \begin{split}
        f_S(t+h)-f_S(t)&= f_S(t)\frac{1-hB_1(t) -hB_{\root}(t)-1+h{\beta_t} f_I(t) +hB_{\root}(t)+o(h)}{1-h({\beta_t} f_I(t)+B_{\root}(t))+o(h)}
        \\ &= f_S(t)\frac{h{\beta_t} f_I(t) -hB_1(t)+o(h)}{1+o(1)}. 
    \end{split}
\end{align}
In turn, this implies 
\begin{equation}\label{eq:F_S_proof}
    \dot f_S(t)= \beta_tf_S(t)f_I(t)- f_S(t) B_1(t). 
\end{equation}

Similarly, recalling that $f_I(t+h)=\P(X_1(t+h)=I\ |\ X_\root(t+h)=S, \ 1\in\Tmc)$ from \eqref{eq:probs}, using the fact that a particle that at time $t+h$ is in state $I$ could only have been in states $S$ or $I$ at time $t$, and using a similar derivation as in \eqref{eq:proof_fs_f_S}-\eqref{eq:proof_fs_endf_S}, 
\begin{align}
    \begin{split}
f_I(t+h)&=\sum_{a=S,I}f_a(t)\frac{\P(X_\root(t+h)=S, \ X_1(t+h)=I|\ X_\root(t)=S , \ X_1(t)=a,\ 1\in\Tmc)}{\P(X_\root(t+h)=S|\ X_\root(t)=S ,1\in\Tmc)}\\ &= \frac{f_S(t)(hB_1(t)+o(h)) + f_I(t)(1-h({\rho_t}+B_{\root}(t)+{\beta_t})+o(h))}{1-h(f_I(t){\beta_t}+B_{\root}(t)) +o(h)},
    \end{split}
\end{align}
      which implies that
       \begin{align}
    \begin{split}
&f_I(t+h)-f_I(t)=(1+o(1))(h f_S(t)B_1(t) -hf_I(t)({\rho_t}+{\beta_t} -{\beta_t} f_I(t)) +o(h)).
    \end{split}
\end{align}
  It follows that
    \begin{equation}\label{eq:f_I_proof}
        \dot f_I=f_S B_1 -f_I({\rho}+{\beta}-{\beta} f_I).
    \end{equation}
    
    In view of \eqref{eq:F_S_proof} and \eqref{eq:f_I_proof} all that is left to find is an expression for $B_1(t)$, the conditional rate at which the children of vertex $1$ infect vertex $1$ at time $t$, given $X_1(t)=S$, in terms only of ${\beta_t},\ {\rho_t},\ \sizeb,$ and $f_I(t)$.  
    By  Proposition \ref{prop:cond_ind_prop}, $X_{\partial_1\setminus{\root}}(t)$ is conditionally independent of $X_\root(t)$ given $X_1(t)=S$. Also by Proposition \ref{prop:cond_ind_prop}, $\{X_{1i}(t)\}_{i=1,...,k}$, are conditionally i.i.d.  given $X_1(t)=S$ and $d_1=k+1$, and by Proposition \ref{prop:symmetry}, $$\P(X_{1i}(t)=I\ |\ X_{1}(t)=S,\ d_1=k+1)= \P(X_{1i}(t)=I\ |\ X_{1}(t)=S, \ 1i\in\Tmc) =f_I(t),$$
    for $i=1,...,k$.
    This implies that 
    \begin{align}\label{eq:B1_1}
        \begin{split}
            B_1(t)&= \beta_t\E[\infects(X_{\partial^\V_1\setminus\{\root\}}(t-)) \ | \ X_1(t-)=S]
            \\ &=  \beta_t\E[ \E[\infects(X_{\partial^\V_1\setminus\{\root\}}(t-)) | X_1(t-)=S, \ d_1=k+1] \ |\  X_1(t-)=S]
            \\ & =  \beta_t f_I(t)\E[ \E[d_1-1 | X_1(t-)=S, \ d_1=k+1] \ |\  X_1(t-)=S]
            \\&= \beta_t f_I(t) (\E[d_1 \ -1 |\ X_1(t-)=S]).
        \end{split}
    \end{align}
    
     Next, we find a more explicit description of the conditional expectation in the last line of \eqref{eq:B1_1}.  Let $\bar{\Nmc}=\{k\in\natzero \ : \ \sizeb(k+1)>0\}$. For $k\in\bar{\Nmc}$, define
    \begin{equation}
        \condi_k:=\P(X_1(t)=S\ | \ 1\in\Tmc, \ d_1=k+1).
    \end{equation}
Then, observing that $X_1(t)=S$ implies that $1\in\Tmc$,
\begin{equation}\label{eq:d|X1}
    \E[d_1-1\ | X_1(t-)=S]=\sum_{k\in\bar{\Nmc}}k\frac{\P(d_1=k+1\ | \ 1\in\Tmc)}{\P(X_1(t-)=S\ |\ 1\in\Tmc)} \condi_k(t-).
\end{equation}
By \eqref{eq:S(E)IR-rate}, the conditional rate at which the individual at $1$ is infected, given that $X_1(t-)=S$ and $d_1=k+1$, is
% \footnote{I think: \blue{And another application of Proposition \ref{prop:filtering} with $\alpha=0$ and $U=\{1\}\cup\{1\ell\}_{\ell=1}^k+1$}}
\begin{align}\label{eq:rate_X1}
    \begin{split}
& \beta_t\E[\infects(X_{\partial^\V_1}(t)-)) \ | \ X_1(t-)=S, \ d_1=k+1 ]
        \\ &= \beta_t\E[\infects(X_{\partial^\V_1\setminus{\{\root\}}}(t-))) \ | \ X_1(t-)=S, \ 
        d_1=k+1 ] +  \beta_t\E[\infects(X_{\root}(t-))) \ |\ X_1(t-)=S ]
        \\ &= \beta_t k f_I(t) + \beta_t\E[\infects(X_{\root}(t-))) \ |\ X_1(t-)=S ]
    \end{split}
\end{align}
where the second equality follows from Proposition \ref{prop:cond_ind_prop} and Proposition \ref{prop:symmetry}, and  the first equality follows from an application of Proposition \ref{prop:cond_ind_prop} with $A=\{\root\}\cup\{\ell v\}_{ \ell\in\N\setminus\{1\}, v\in\V}$, $\partial^\V A=\{1\}$ and $B=\{1m\}_{m\in\N}$. Setting $\iota(t):=\beta_t\E[\infects(X_{\root}(t-))) \ |\ X_1(t-)=S ]$,  using the monotonicty \eqref{eq:mono} of the SIR process in the first equality and \eqref{eq:rate_X1} in the second, we have
\begin{align}
    \begin{split}
        \condi_k(t+h)=&\P(X_1(t+h)=S \ | X_1(t)=S \ d_1=k+1)\condi_k(t)
        \\=&(1-hk\beta_tf_I(t)-h\iota(t) )\condi_k(t),
    \end{split}
\end{align}
and it follows that 
\begin{equation*}
    \dot \condi_k(t)=-(k\beta_t f_I(t)+ \iota(t))\condi_k(t)
\end{equation*}
and, since $\condi_k(0)=s_0$ by  Assumption \ref{assu:initial_z},
\begin{equation}
    \label{eq:\condi}
    \condi_k(t)=s_0e^{-k\int_0^t\beta_sf_I(s)ds + \int_0^t\iota(s)ds}.
\end{equation}

Next, observing that  $\P(d_1=k+1 \ | \ 1\in\Tmc)=\sizeb(k)$ since $\Tmc$ is a UGW$(\offspring)$, and
\begin{align*}
    \begin{split}
    \P(X_1(t-)=S \ | \ 1\in\Tmc) &= \sum_{k\in\bar{\Nmc}} \P(X_1(t-)=S \ | d_1=k+1, 1\in\Tmc)\P(d_1=k+1 \ |1\in\Tmc )
        \\ &= \sum_{k\in\bar{\Nmc}} r_k(t)\sizeb(k),
    \end{split}
\end{align*}
the expression in \eqref{eq:d|X1} can be rewritten as
\begin{align}\label{eq:d_1_final}
    \begin{split}
        \E[d_1-1\ | X_1(t-)=S] = &\sum_{k\in\bar{\Nmc}}k\frac{\P(d_1=k+1\ | \ 1\in\Tmc)}{\P(X_1(t)=S\ |\ 1\in\Tmc)} \condi_k(t).
        \\ =& \sum_{k\in\bar{\Nmc}}k\frac{\sizeb(k)}{\sum_{\ell\in\bar{\Nmc}}\sizeb(\ell)\condi_\ell} \condi_k(t)
        \\ =& \sum_{k\in\bar{\Nmc}}k \sizeb(k)\frac{ s_0e^{-k\int_0^t\beta_sf_I(s)ds + \int_0^t\iota(s)ds} }{\sum_{\ell\in\bar{\Nmc}}\sizeb(\ell)s_0e^{-\ell\int_0^t\beta_sf_I(s)ds + \int_0^t\iota(s)ds}} 
        \\ =& \frac{ \sum_{k\in\bar{\Nmc}}k \sizeb(k)e^{-k\int_0^t\beta_sf_I(s)ds} } {\sum_{\ell\in\bar{\Nmc}}\sizeb(\ell)e^{-\ell\int_0^t\beta_sf_I(s)ds}}, 
    \end{split}
\end{align}
where in the third equality we used \eqref{eq:\condi}. Combining  \eqref{eq:d_1_final} and \eqref{eq:B1_1}, and recalling that $F_I(t):=\int_0^t\beta_sf_I(s)ds$, we obtain
    \begin{align}\label{eq:B_1}
        \begin{split}
            B_1(t)=  \beta_t f_I(t)  \frac{\sum_{k=0}^\infty k\sizeb(k) e^{- k F_I(t)}}{ \sum_{\ell=0}^\infty \sizeb(\ell) 
  e^{- \ell F_I(t)} } .
        \end{split}
    \end{align}
    As desired, this expresses $B_1(t)$ purely in terms of $\ \sizeb, \ f_I $ and $F_I(t)$. Combining \eqref{eq:B_1} with \eqref{eq:F_S_proof} and \eqref{eq:f_I_proof} establishes the first and second equation of \eqref{eq:f_If_SF_I}, thus concluding the proof.  \end{proof}

\begin{remark}\label{rem-Markov}
In the proof of Theorem \ref{thm:SIR-fs} we showed that the jump rate $\qhat_v(t,X^\Tmc_{\root,1})$ as defined in \eqref{eq:qhat-def} is not path dependent on the event $\{X_\root(t-)=S\}$. By a similar argument that appeals to Proposition \ref{prop:cond_ind_prop}, one can show that $\qhat_v(t,X^\Tmc_{\root,1})$ is also not path dependent on the event $\{X_\root(t-)=S\}^c$, thereby showing that $X_{\root,1}$ is a  Markov  process.  The analogue of the latter property  can also be shown to  hold for the discrete-time SIR process using a similar (in fact, simpler) proof.  Numerical evidence supporting this property for the discrete-time SIR process on trees was first provided in \cite{WortThesis18}. 
\end{remark}

\begin{theorem} \label{thm:SIR_ODE}
Let $f_I$ be as in \eqref{eq:probs} and set $\Nmc:=\{m\in\natzero\ : \ \offspring(m)>0\}$. Then  $((P_{S,S;k}$, $ P_{S,I;k})_{k\in\Nmc}$, $ P_{I,I})$, as defined in \eqref{eq:probs}, is the unique solution to the following system of ODEs:
\begin{equation} \label{eq:SIR-ODE}
    \begin{cases}
        \dot P_{S,S;k}= -{\beta} k f_I P_{S,S;k}, & k\in\Nmc,
        \\  \dot P_{S,I;k}= {\beta} k f_I P_{S,S;k} -{\rho} P_{S,I;k} , & k\in\Nmc,
        \\ \dot P_{I,I}=- {\rho} P_{I,I},

    \end{cases}
\end{equation}
    with initial conditions 
    \begin{equation} \label{eq:SIR-ODE-initial}
    \begin{cases}
        P_{S,S;k}(0)= 1,  & k\in\Nmc,
        \\   P_{S,I;k}(0)= 0,  &   k\in\Nmc,
        \\  P_{I,I}(0)= 1,
    \end{cases}
\end{equation}
\end{theorem}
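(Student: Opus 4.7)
The plan is to follow the same strategy used in the proof of Theorem \ref{thm:SIR-fs}, but now tracking the single-particle process $X^{\Tmc}_{\root}$ instead of the pair $X^{\Tmc}_{\{\root,1\}}$. In particular, the derivation is substantially simpler because on the event $\{X_\root(0)=a\}$ (with $a\in\{S,I\}$) we do not need to carry an auxiliary neighbor in the conditioning. The initial conditions \eqref{eq:SIR-ODE-initial} follow immediately from the definition \eqref{eq:probs}: at $t=0$ each $P_{a,a;k}(0)=1$ and $P_{S,I;k}(0)=0$ by continuity of the process at time $0$.

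For $P_{I,I}$, note that from state $I$ the only admissible transition is $I\to R$ at rate $\rho_t$, so a standard first-order expansion gives $P_{I,I}(t+h)=(1-h\rho_t+o(h))P_{I,I}(t)$, which yields $\dot P_{I,I}=-\rho P_{I,I}$. For $P_{S,S;k}$, applying the projection Proposition \ref{prop:filtering} with $U=\{\root\}$, the root evolves as a jump process whose intensity of leaving $S$ at time $t$ equals $\beta_t\,\E[\infects(X_{\partial^{\V}_\root}(t-))\mid X_\root(t-)=S,\ d_\root=k]$. By the conditional independence property in Proposition \ref{prop:cond_ind_prop} applied to $A=\{\root\}$ (so $\partial^\V A=\{1,2,\dots,k\}$ on the event $d_\root=k$) together with the symmetry property of Proposition \ref{prop:symmetry}, the children $X_1(t-),\ldots,X_k(t-)$ are, conditionally on $\{X_\root(t-)=S,\ d_\root=k\}$, exchangeable with each marginal equal to $\P(\,\cdot \mid X_\root(t-)=S,\ 1\in\Tmc)$. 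Hence
\begin{equation*}
\E[\infects(X_{\partial^{\V}_\root}(t-))\mid X_\root(t-)=S,\ d_\root=k]=k f_I(t),
\end{equation*}
and a first-order expansion in $h$ yields $\dot P_{S,S;k}=-\beta k f_I P_{S,S;k}$.

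The ODE for $P_{S,I;k}$ is obtained by balancing gain and loss at the root: the root enters state $I$ at rate $\beta_t k f_I(t) P_{S,S;k}(t)$ (using the same conditional computation as above combined with the identity $P_{S,S;k}+P_{S,I;k}+P_{S,R;k}=1$, so that conditioning on $\{X_\root(t-)=S\}$ simply re-weights $P_{S,S;k}$), and leaves $I$ for $R$ at rate $\rho_t P_{S,I;k}(t)$. This yields the second equation of \eqref{eq:SIR-ODE}.

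Finally, for each $k\in\Nmc$ the equations for $(P_{S,S;k},P_{S,I;k})$ are decoupled from those with other values of $k$ and from $P_{I,I}$, and form a linear triangular system with coefficients $\beta_t k f_I(t)$ and $\rho_t$ that are continuous and bounded on $[0,\infty)$ by Assumption \ref{assu:beta_rho} and the boundedness of $f_I\in[0,1]$. Uniqueness of the solution therefore follows from a direct application of the Picard--Lindel\"of theorem. The main substantive step is the reduction of the infection rate to $\beta k f_I$, which hinges on Propositions \ref{prop:cond_ind_prop} and \ref{prop:symmetry}; the rest is a straightforward first-order expansion analogous to (and simpler than) the one carried out in the proof of Theorem \ref{thm:SIR-fs}.
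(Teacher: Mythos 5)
Your proposal follows essentially the same route as the paper: a first-order expansion in $h$ of the transition probabilities of the root, with the key reduction $\E[\infects(X_{\partial_\root}(t-))\mid X_\root(t-)=S,\ d_\root=k]=kf_I(t)$ obtained from Propositions \ref{prop:cond_ind_prop} and \ref{prop:symmetry}, and uniqueness from linearity of the system. The substance is right, but one invocation is misstated: you cannot apply the first assertion of Proposition \ref{prop:cond_ind_prop} with $A=\{\root\}$, since $\partial^\V\{\root\}=\N$ is infinite (the hypothesis $|\partial^\V A|<\infty$ fails), and even informally that assertion would condition on \emph{all} children being in state $S$, which is not the conditioning appearing in $P_{S,S;k}$. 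What you actually need is the proposition's second assertion (the children's trajectories are conditionally independent given $X_\root(t-)=S$ and $d_\root$), and, to identify each marginal with $f_I(t)=\P(X_1(t)=I\mid X_\root(t)=S,\ 1\in\Tmc)$ rather than a quantity still carrying the conditioning on $d_\root=k$, an application of the first assertion with $A$ equal to the subtree $\T_m$ rooted at a child $m$, so that $\partial^\V A=\{\root\}$ and $B$ is the union of the other children's subtrees; this is how the paper removes the degree from the conditioning. Relatedly, the appeal to Proposition \ref{prop:filtering} with $U=\{\root\}$ is not quite what is needed either, since $d_\root=k$ is not measurable with respect to the root's own history $X_\root[t)$; the paper sidesteps this by expanding $P_{S,S;k}(t+h)$ directly over the neighbor configuration. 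These are citation-level fixes rather than missing ideas, and the rest of your argument (the $P_{I,I}$ and $P_{S,I;k}$ balances, and Picard--Lindel\"of for uniqueness) matches the paper's proof.
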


 \begin{proof}
    Throughout the proof, in order to simplify the notation we write $X$ in lieu of $X^{\Tmc}$.
    By Assumption \ref{assu:beta_rho}, the fact that $f_I$ defined in \eqref{eq:probs} is continuous (since by Theorem \eqref{thm:SIR-fs} it is characterized in terms of the solution of the ODE system \eqref{eq:f_If_SF_I}-\eqref{eq:ffF-initial}) and the fact that the ODE system \eqref{eq:SIR-ODE} is linear, the initial value problem \eqref{eq:SIR-ODE}-\eqref{eq:SIR-ODE-initial} has a unique solution.
    Clearly, from \eqref{eq:probs}, the initial conditions \eqref{eq:SIR-ODE-initial} hold. Next, we show that \eqref{eq:SIR-ODE} is satisfied.

We start by considering $P_{S,S;k}$. Fix $t\geq0$,  $h>0$ and $k\in\Nmc$. Since $s_0=\P(X_\root(0)=0)>0$ and $d_\root$ is independent of $X_\root(0)$, $\P(X_\root(0)=S, \ d_\root)>0$.  From \eqref{eq:probs}, noting that $X_{\partial _\root}(t)=y\in\stateS^{k}$ implicitly implies $d_\root=k,$ we have 
\begin{align}\label{eq:proof_sir_ode_0}
            \begin{split}
                & P_{S,S;k}(t+h) 
                \\ &=\P(X_\root(t+h)=S \ | \ X_\root(0)=S, \ d_\root=k)
                \\ &= \sum_{y\in\stateS^{k}} \frac{\P(X_\root(t+h)=S,\ X_\root(0)=S ,  \ X_{ \partial_\root}(t)=y) }{\P(X_{\root}(0)=S, \ d_\root=k  )} 
                \\&= \sum_{y\in\stateS^{k}} \frac{\P(X_\root(t+h)=S ,  \  X_\root(t)=S, \ X_{ \partial_\root}(t)=y ) }{\P(X_{\root}(0)=S, \ d_\root=k  )} 
                 \\ &= \sum_{y\in\Smc_{k,t}}\P(X_\root(t+h)=S | X_\root(t)=S, X_{{\partial_\root}}(t)=y) \P(X_\root(t)=S, X_{ {\partial_\root}}(t)=y |  X_{\root}(0)=S, d_\root=k) ,
            \end{split}
        \end{align}
        where $\Smc_{k,t}:=\{y\in\stateS^k\ : \ \P(X_\root(t)=S, \ X_{\partial_\root}(t)=y)>0\}$, and the monotonicity \eqref{eq:mono} of the SIR process is used in the third and fourth equality. Since the jump rate of a susceptible individual whose neighbors' states are equal to $y\in\stateS^{k}$ is equal to ${\beta_t}\infects (y)$, we have that \begin{equation}\label{eq:proof_sir_ode_i}
        \P(X_\root(t+h)=S \ |\ X_\root(t)=S, X_{{\partial_\root}}(t)=y)= 1-h{\beta_t}\infects (y)+o(h).
    \end{equation}
    The expression on the right-hand side of \eqref{eq:proof_sir_ode_i} does not depend on the exact values of the $k-\Imc(y)$ elements of $y$ that are not equal to $I$. Thus, substituting the expression in \eqref{eq:proof_sir_ode_i} into the last line of \eqref{eq:proof_sir_ode_0} and rewriting the sum to be over the number of infected neighbors of $\root$,
        \begin{align*}
            \begin{split}
               P_{S,S;k}(t+h) =  &\sum_{j=0}^k (1-h {\beta_t} j +o(h)) \ \P(X_\root(t)=S,\ \infects (X_{ {\partial_\root}}(t))=j\ | \ X_{\root}(0)=S, d_\root=k) 
                \\ =& \sum_{j=0}^k (1-h {\beta_t} j +o(h)) \ \P(\infects(X_{ {\partial_\root}}(t))=j\ | X_\root(t)=S, \ X_{\root}(0)=S,\  d_\root=k) \ P_{S,S;k}(t) 
                \\ =& \sum_{j=0}^k (1-h {\beta_t} j +o(h)) \ \P(\infects(X_{ {\partial_\root}}(t))=j\ |  X_\root(t)=S, \ d_\root=k) \ P_{S,S;k}(t),
            \end{split}
        \end{align*}
        where in the last equality we used the monotonicity of the SIR process \eqref{eq:mono}.
        Applying Proposition \ref{prop:cond_ind_prop} with $\alpha=0$, it follows that $\{X_{i}(t) \ : \ i\sim \root \}$ are conditionally i.i.d. given $\{X_{\root}(t)=S, d_\root=k\}$. Furthermore, for $k\geq 1$ and $m\in\N \cap [0, k]$, by Proposition \ref{prop:symmetry} and another application of Proposition \ref{prop:cond_ind_prop} with $A=\set{mv}_{v\in\V}$, $\partial^\V A=\set{\root}$ and $B=\N\setminus\set{m}$, and observing that $d_\root=\sum_{\ell=1}^\infty\one_{\set{X_\ell(0)\neq\extra}}$, we have that  $$\P(X_m(t)=I \ |\ X_\root(t)=S,\  d_\root=k)= \P(X_m(t)=I \ |\ X_\root(t)=S,\  m\in\Tmc) =f_I(t),$$ where $f_I$ is as in \eqref{eq:probs}. Therefore, conditional on $X_\root(t)=S$ and  $d_\root=k$, $\infects(X_{{\partial_\root}}(t))$ has a binomial distribution with parameters $(k,f_I(t))$. It follows that, letting $Y$ be a binomial random variable with parameters $(k,f_I(t))$,
        \begin{align}\label{eq:proof_sir-ii}
        \begin{split}
              %P_{S,S;k}(t+h) &= P_{S,S;k}(t)  \sum_{j=0}^k (1-h {\beta_t} j +o(h)) {k \choose j} (f_I)^j(1-f_I)^{k-j} \ \\ &= 
               P_{S,S;k}(t+h) & =P_{S,S;k}(t) (\E[1-h {\beta_t} Y] +o(h)) 
              \\ & = P_{S,S;k}(t)(1-h{\beta_t} k f_I + o(h)).
        \end{split}
        \end{align}
         This implies 
         \begin{align}
         \lim_{h\rightarrow0^+} \frac{P_{S,S;k}(t+h)-P_{S,S;k}(t)}{h}=   \lim_{h\rightarrow0^+} \frac{(1-h{\beta_t} k f_I(t) +o(h)-1)P_{S,S;k}(t)}{h}= -{\beta_t} k f_I(t)P_{S,S;k}(t),    
         \end{align}
         which proves the first equation in \eqref{eq:SIR-ODE}

         The derivations of the ODEs for $ P_{S,I;k}$ and $ P_{I,I}$ are similar, and are thus only outlined below. As in the last line of \eqref{eq:proof_sir_ode_0}, we start by writing

\begin{equation}\label{eq:P_SIk}
    P_{S,I;k}(t+h) = \Qmc_I(h) + \Qmc_S(h),
\end{equation}
         where for $b=I$ and $b=S$
         \begin{align*}
               &\Qmc_b(h)
               = \sum_{j=0}^k \P(X_\root(t+h)=I , \ X_\root(t)=b,\ \infects (X_{{\partial_\root}}(t))=j \ |\    X_{\root}(0)=S, \ d_\root=k).
         \end{align*}
           
        Recalling the definition of the SIR rates $q_0$ as in \eqref{eq:S(E)IR-rate} and using arguments similar to what used to derive \eqref{eq:proof_sir_ode_i}-\eqref{eq:proof_sir-ii}, $\Qmc_S(h)=(h{\beta_t} k f_I(t) +o(h) )P_{S,S;k}(t)$ and 
        \begin{align}
            \begin{split}
                 \Qmc_I(h)&= \sum_{j=0}^k (1-{\rho_t} h +o(h))  \P(X_\root(t)=I,\ \infects (X_{ {\partial_\root}}(t))=j\ |\  X_{\root}(0)=S, \ d_\root=k) 
                 \\ &= (1-{\rho_t} h +o(h))  \sum_{j=0}^k \P(X_\root(t)=I,\ \infects (X_{ {\partial_\root}}(t))=j\ |\  X_{\root}(0)=S, \ d_\root=k ) 
                 \\ &= (1-{\rho_t} h +o(h))
                 \P(X_\root(t)=I \ | \ X_{\root}(0)=S, \ d_\root=k)
                 \\&=(1-{\rho_t} h + o(h)) P_{S,I;k}(t).
            \end{split}
        \end{align}
       Substituting the last two displays into \eqref{eq:P_SIk}, we obtain $P_{S,I;k}(t+h)-P_{S,I;k}(t)= hk {\beta_t} f_I(t) P_{S,S;k}(t)- {\rho_t} h P_{S,I;k}(t) + o(h)$,which implies the second equation in \eqref{eq:SIR-ODE}.
       
       Next, to obtain the ODE for $P_{I,I}$ note that by definition of the jump rate \eqref{eq:S(E)IR-rate}, setting $\Nmc:=\set{k\in\natzero\ : \ \offspring(k)>0}$,
       \begin{align}
       \begin{split}
         &  P_{I,I}(t+h)
         \\ &=\sum_{k\in \Nmc }\sum_{y\in\stateS^k}\P(X_\root(t+h)=I  | X_\root(t)=I, X_{\partial_\root}(t)=y) \P(X_\root(t)=I,\ X_{\partial_\root}(t)=y |  X_{\root}(0)=I) 
         \\& = (1-h {\rho_t} + o(h)) \sum_{k\in \Nmc }\sum_{y\in\stateS^k}\P(X_\root(t)=I,\ X_{\partial_\root}(t)=y\ | \ X_{\root}(0)=I) 
          \\& = (1-h {\rho_t} + o(h)) P_{I,I}(t),
       \end{split}
       \end{align}
        which implies the third equation in \eqref{eq:SIR-ODE} and concludes the proof.

        \end{proof}
We can combine the results above to prove Theorem \ref{thm:MAIN_SIR}.

 \begin{proof}[Proof of Theorem \ref{thm:MAIN_SIR}]
By Theorem \ref{thm:hydrodinamic}, $\lim_{n\rightarrow\infty}s^{G_n}(t)=\P(X^\Tmc_\root(t)=S)$ and $\lim_{n\rightarrow\infty}i^{G_n}(t)=\P(X^\Tmc_\root(t)=I)$. By Theorem \ref{thm:SIR_ODE}, we can characterize the transition rates of  $X^{\Tmc}_\root(t)$  defined in \eqref{eq:probs} as the solution to the ODE system \eqref{eq:SIR-ODE}-\eqref{eq:SIR-ODE-initial}. Let $f_I$ and $ f_S$ be as in \eqref{eq:probs}, and $F_I(t)=\int_0^t\beta_sf_I(s)ds$, $t\in[0,\infty)$ .Then we can solve the ODE system  \eqref{eq:SIR-ODE}-\eqref{eq:SIR-ODE-initial} as follows:
\begin{equation*}
    \begin{alignedat}{2}
         & P_{S, S ;k}(t) & &= e^{-kF_I(t)},
    \\ & P_{S, I ;k}(t) & &= e^{-\int_0^t\rho_u du} \int_0^t k e^{-kF_I(u)} e^{\int_0^u\rho_\tau d \tau} \beta_uf_I(u)d u,
       \\ &P_{I, I}(t)& &=e^{-\int_0^t\rho_u d u}.
    \end{alignedat}
\end{equation*}
In view of \eqref{eq:probs}, by averaging over $d_\root$, that is, by multiplying each of the quantities above by $\offspring(k)=\P(d_\root=k)$ and summing over $k\in\N$, we conclude that
\begin{equation*}
    \begin{alignedat}{2}
        & P_{S, S}(t) & &= M_\offspring(-F_I(t)),
    \\ & P_{S, I}(t) & &= e^{-\int_0^t\rho_u du} \int_0^t M'_\offspring (-F_I(u)) e^{\int_0^u\rho_\tau d \tau} \beta_uf_I(u)du,
    \end{alignedat}
\end{equation*}
where $M'_\offspring(z)=\sum_{k=0}^\infty k \offspring(k)e^{kz}$, and the exchange in order of summation and integration is justified by the fact that every term is non-negative. By Theorem \ref{thm:SIR-fs}, $(f_S$, $f_I$, $F_I)$ solve the ODE system \eqref{eq:f_If_SF_I}-\eqref{eq:ffF-initial}.
Finally, since $\P(X^\Tmc_\root(t)=S)=s_0P_{S,S}(t)$ and $\P(X^\Tmc_\root(t)=I)= s_0 P_{S,I}(t) + i_0P_{I,I}(t)$, equation \eqref{eq:s_i_infinity} follows. This completes the proof.
\end{proof}

\subsubsection{Proof of Theorem \ref{thm:SEIR_main}}\label{sec:SEIR-main-proof}
Now, we turn our attention to the SEIR process. Since its derivation is similar to that of Theorem \ref{thm:MAIN_SIR}, we relegate most of the details to Appendix \ref{sec:proof-SEIR}. For $a,b\in\stateSS$,  $t\in[0,\infty)$ and $k\in\{m\in\natzero \ : \ \offspring(m)>0\}$, define
\begin{equation}\label{eq:SEIR-probs}
    \begin{alignedat}{2}
        &Q^\offspring_{a,b;k}(t)& &\coloneqq\P(\Xbar^{\Tmc}_\root(t)=b\ |\ \Xbar^{\Tmc}_\root(0)=a, \ d_\root=k) ,
        \\ &Q^\offspring_{a,b}(t)& &\coloneqq\P(\Xbar^{\Tmc}_\root(t)=b \ |\  \Xbar^{\Tmc}_\root(0)=a),
      \\ & g^\offspring_{a}(t)& &\coloneqq \P(\Xbar^{\Tmc}_1(t)=a \ |\ \Xbar^{\Tmc}_\root(t)=S, \ 1\in \Tmc).
    \end{alignedat}
\end{equation}
When clear from the context, we omit the dependence on $\offspring$ and  write $Q_{a,b},\ Q_{a,b;k}$ and $g_{a}$.

\begin{theorem}\label{thm:gggG}
            Let $g_S$, $g_E$ and $g_I$ be as in \eqref{eq:SEIR-probs} and set $G_I(t):=\int_0^t\beta_s g_I(s)ds$  for $t\in[0,\infty)$. Then, $(g_S$, $g_E$, $g_I$, $G_I)$ solves the ODE system \eqref{eq:gggG-Odes}-\eqref{eq:gggG-Odes_initial}.
        \end{theorem}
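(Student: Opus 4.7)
The plan is to mimic the proof of Theorem \ref{thm:SIR-fs}, adapted to the SEIR process $\Xbar^\Tmc = \xi^{\Tmc,1}$. The initial conditions in \eqref{eq:gggG-Odes_initial} are immediate from Assumption \ref{assu:initial_z} together with $G_I(0)=0$, and the last ODE $\dot G_I = \beta g_I$ follows from the fundamental theorem of calculus. For the other three equations, first apply Proposition \ref{prop:filtering} with $\alpha=1$ and $U=\{\root,1\}$ to realize $\Xbar^\Tmc_{\root,1}$ as a two-particle jump process on $\stateSS\times\stateSS$ with predictable rates. Introduce
\[
B_1(t) := \beta_t\,\E[\infects(\Xbar^\Tmc_{\partial^\V_1 \setminus \{\root\}}(t-)) \mid \Xbar^\Tmc_1(t-)=S], \qquad B_\root(t) := \beta_t\,\E[\infects(\Xbar^\Tmc_{\partial^\V_\root \setminus \{1\}}(t-)) \mid \Xbar^\Tmc_\root(t-)=S],
\]
and observe via Proposition \ref{prop:cond_ind_prop} that these conditional expectations do not depend on the state of the other of $\{\root,1\}$. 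Using \eqref{eq:S(E)IR-rate} with $\alpha=1$, the non-trivial transitions of $\Xbar^\Tmc_{\root,1}$ from states with $\Xbar_\root(t-)=S$ are then: from $(S,S)$, $(S,S)\to(E,S)$ at rate $B_\root(t)$ and $(S,S)\to(S,E)$ at rate $B_1(t)$; from $(S,E)$, $(S,E)\to(E,E)$ at rate $B_\root(t)$ and $(S,E)\to(S,I)$ at rate $\lambda_t$; from $(S,I)$, $(S,I)\to(E,I)$ at rate $\beta_t+B_\root(t)$ and $(S,I)\to(S,R)$ at rate $\rho_t$.

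Second, for each $a \in \{S,E,I\}$ I apply Lemma \ref{lem:cond-identity-lemma} as in \eqref{eq:proof_fs_f_S}--\eqref{eq:proof_fs_endf_S}, summing the numerator over the admissible previous states of $\Xbar_1(t)$ (which by monotonicity \eqref{eq:mono} in the SEIR ordering are exactly the $b$ with $b \leq a$) and using that the denominator $\P(\Xbar_\root(t+h)=S \mid \Xbar_\root(t)=S, 1 \in \Tmc)$ simplifies to $1 - h(\beta_t g_I(t) + B_\root(t)) + o(h)$ by the same calculation as in the SIR case. Passing to the limit $h \to 0^+$ yields
\[
\dot g_S = g_S(\beta g_I - B_1), \qquad \dot g_E = g_S B_1 - g_E(\lambda - \beta g_I), \qquad \dot g_I = \lambda g_E - g_I(\rho + \beta - \beta g_I).
\]
The one genuinely new computation relative to the SIR case is the middle ODE: the numerator for $g_E(t+h)$ picks up contributions both from $\Xbar_1(t)=S$ (with conditional probability $hB_1(t)+o(h)$) and from $\Xbar_1(t)=E$ (which persists with conditional probability $1 - h(\lambda_t + B_\root(t)) + o(h)$), and the cancellation of the $B_\root(t)$ contributions against the denominator produces precisely the $-g_E(\lambda-\beta g_I)$ term.

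Third, to close the system I express $B_1(t)$ in terms of $g_I$ and $G_I$ by running the second half of the proof of Theorem \ref{thm:SIR-fs} verbatim: conditional on $\Xbar_1(t)=S$ and $d_1=k+1$, Proposition \ref{prop:cond_ind_prop} gives that the $k$ children of vertex $1$ in $\V\setminus\{\root\}$ have conditionally i.i.d.\ states, and Proposition \ref{prop:symmetry} identifies each child's conditional probability of being in state $I$ as $g_I(t)$, so $B_1(t) = \beta_t g_I(t)\,\E[d_1-1 \mid \Xbar_1(t-)=S]$. Setting $r_k(t) := \P(\Xbar_1(t)=S \mid 1 \in \Tmc,\, d_1=k+1)$, a Kolmogorov-type ODE for $r_k$ factors as $r_k(t) = s_0 \exp(-kG_I(t) + \int_0^t \iota(s)\,ds)$ with a single $k$-independent function $\iota$, and the common factor cancels under the size-biased average, yielding
\[
B_1(t) = \beta_t g_I(t)\,\frac{\sum_{k=0}^\infty k\,\sizeb(k)\,e^{-k G_I(t)}}{\sum_{\ell=0}^\infty \sizeb(\ell)\,e^{-\ell G_I(t)}}.
\]
Substituting this into the three ODEs from the previous paragraph produces the first three equations of \eqref{eq:gggG-Odes}. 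The main obstacle is the additional bookkeeping introduced by the intermediate state $E$, in particular the identification of the source term $g_S B_1$ in $\dot g_E$ and the careful accounting of transitions governed by $\lambda$; however, because Propositions \ref{prop:cond_ind_prop} and \ref{prop:symmetry} were established for the general S(E)IR process with arbitrary $\alpha\in[0,1]$, no new conditional independence or symmetry results are required.
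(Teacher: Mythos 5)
Your proposal follows the paper's own proof essentially step for step: the same application of Proposition \ref{prop:filtering} with $\alpha=1$ and $U=\{\root,1\}$, the same jump-rate table for the pair process, the same use of Lemma \ref{lem:cond-identity-lemma} with cancellation of the $B_\root$ terms, and the same closure of the system by expressing $B_1$ through the size-biased average exactly as in Theorem \ref{thm:SIR-fs}. The argument is correct; the only detail left implicit (and handled explicitly in the paper) is that the $S\to I$ contribution to $g_I(t+h)$ is $o(h^2)$ because it requires two jumps, but this does not affect the resulting ODE.
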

The proof of Theorem \ref{thm:gggG} is given in Appendix \ref{sec:proof-SEIR}.

\begin{theorem} \label{thm:SEIR_ODE}
   
Let $g_I$ be as in \eqref{eq:SEIR-probs} and set $\Nmc:=\{m\in\natzero\ : \ \offspring(m)>0\}$.  Then $ ((Q_{S,i;k})_{,i\in\stateSS\setminus\{R\}, k\in\Nmc}$, $ Q_{E,E}$, $Q_{E,I}$, $Q_{I,I})$ is the unique solution to the following system of ODEs:
    
 \begin{align} \label{eq:ODE-SEIR}
        \begin{cases}
            \dot Q_{S,S;k}=-{\beta} k g_I Q_{S,S;k},  &  k \in\Nmc,
            \\ \dot Q_{S,E;k}= {\beta} k g_I Q_{S,S;k}- {\lambda} Q_{S,E;k}, &  k  \in\Nmc,
            \\ \dot Q_{S,I;k}={\lambda} Q_{S,E;k}-{\rho} Q_{S,I;k}, &  k \in\Nmc,
            \\ \dot Q_{E,E}= -{\lambda} Q_{E,E},
            \\ \dot Q_{E,I}={\lambda} Q_{E,E}- \rho Q_{E,I} ,
            \\ \dot Q_{I,I}=-{\rho} Q_{I,I},
        \end{cases}
    \end{align}

with initial conditions
    \begin{align} \label{eq:ODE-SEIR_initial}
        \begin{cases}
             Q_{a,b}(0)=\one_{\{a=b\}},   & a,b\in\stateSS,
            \\
             Q_{a,b;k}(0)=\one_{\{a=b\}},   & a,b\in\stateSS, \  k\in\Nmc.
        \end{cases}
    \end{align}
\end{theorem}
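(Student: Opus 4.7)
The strategy is a direct analogue of the proof of Theorem \ref{thm:SIR_ODE}, with the additional state $E$ requiring only bookkeeping modifications. Uniqueness of the solution to \eqref{eq:ODE-SEIR}--\eqref{eq:ODE-SEIR_initial} is immediate: the system is linear in the unknowns with coefficients that are continuous in $t$, since $g_I$ is continuous by Theorem \ref{thm:gggG} and $\beta, \lambda, \rho$ are continuous by Assumption \ref{assu:beta_lambda_rho}. The initial conditions \eqref{eq:ODE-SEIR_initial} are read off directly from the definitions in \eqref{eq:SEIR-probs}.

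For the first three equations, which involve conditioning on $\Xbar^{\Tmc}_\root(0) = S$ and $d_\root = k$, I would follow the scheme of the proof of Theorem \ref{thm:SIR_ODE} almost verbatim. Fix $t \ge 0$, $h > 0$ and $b \in \{S, E, I\}$, and expand
\[
    Q_{S,b;k}(t+h) = \sum_{a \in \stateSS} \P\bigl(\Xbar^{\Tmc}_\root(t+h) = b,\, \Xbar^{\Tmc}_\root(t) = a \bigm| \Xbar^{\Tmc}_\root(0) = S,\, d_\root = k\bigr),
\]
using the monotonicity of the SEIR process (the analogue of Remark \ref{rmk:monotonicty}) to restrict the sum to allowed predecessor states. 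Each term is then estimated by further conditioning on the neighbor configuration $\Xbar^{\Tmc}_{\partial_\root}(t) = y \in \stateSS^k$ and applying the jump rate formula \eqref{eq:S(E)IR-rate} with $\alpha = 1$. The decisive step is to evaluate $\E[\infects(\Xbar^{\Tmc}_{\partial_\root}(t)) \mid \Xbar^{\Tmc}_\root(t) = S,\, d_\root = k]$: Proposition \ref{prop:cond_ind_prop} implies that the states $\{\Xbar^{\Tmc}_i(t)\}_{i \sim \root}$ are conditionally i.i.d.\ under this event, and Proposition \ref{prop:symmetry} together with \eqref{eq:SEIR-probs} identifies the common conditional probability of being in state $I$ as $g_I(t)$. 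Hence $\infects(\Xbar^{\Tmc}_{\partial_\root}(t))$ is conditionally $\mathrm{Bin}(k, g_I(t))$, its conditional mean is $k g_I(t)$, and the three ODEs for $Q_{S,\cdot;k}$ follow by the same $h \to 0$ bookkeeping as in the proof of Theorem \ref{thm:SIR_ODE}.

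The remaining three equations for $Q_{E,E}$, $Q_{E,I}$ and $Q_{I,I}$ are simpler, since the rates of the transitions $E \to I$ and $I \to R$ are $\lambda_t$ and $\rho_t$ respectively, neither of which depends on the neighbors' states; no conditional independence argument is required. A direct $h$-expansion of each conditional probability in \eqref{eq:SEIR-probs} yields, for instance, $Q_{I,I}(t+h) = (1 - h \rho_t + o(h)) Q_{I,I}(t)$ and $Q_{E,E}(t+h) = (1 - h \lambda_t + o(h)) Q_{E,E}(t)$, while $Q_{E,I}(t+h)$ receives $h$-order contributions only from the transition $E \to I$ at rate $\lambda_t$ applied to $Q_{E,E}(t)$ and from the decay $I \to R$ at rate $\rho_t$ applied to $Q_{E,I}(t)$, producing the second-to-last equation in \eqref{eq:ODE-SEIR}. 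The main obstacle in the whole argument, as in the SIR case, is exactly the conditional independence and symmetry reduction at the root's neighborhood that turns the expected number of infected neighbors into the mean of a $\mathrm{Bin}(k, g_I(t))$ law; once Propositions \ref{prop:cond_ind_prop} and \ref{prop:symmetry} are in place, the remaining computations are essentially mechanical.
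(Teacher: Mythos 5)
Your proposal is correct and follows essentially the same route as the paper's proof: uniqueness from linearity and continuity of the coefficients, the $h$-expansion over admissible predecessor states using monotonicity, and the reduction of $\E[\infects(\Xbar^{\Tmc}_{\partial_\root}(t)) \mid \Xbar^{\Tmc}_\root(t)=S,\ d_\root=k]$ to the mean of a $\mathrm{Bin}(k,g_I(t))$ law via Propositions \ref{prop:cond_ind_prop} and \ref{prop:symmetry}. The treatment of the neighbor-independent transitions $E\to I$ and $I\to R$ also matches the paper's handling of $Q_{E,E}$, $Q_{E,I}$ and $Q_{I,I}$.
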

The proof of Theorem \ref{thm:SEIR_ODE} is given in Appendix \ref{sec:proof-SEIR}.
We conclude this section by outlining how the last two theorems are used to prove Theorem \ref{thm:SEIR_main}.

 \begin{proof}[Proof of Theorem \ref{thm:SEIR_main}]
By Theorem \ref{thm:hydrodinamic}, $\lim_{n\rightarrow\infty}\sbar^{G_n}(t)=\P(\Xbar^\Tmc_\root(t)=S)$, $\lim_{n\rightarrow\infty}\ebar^{G_n}(t)=\P(\Xbar^\Tmc_\root(t)=E)$, and $\lim_{n\rightarrow\infty}\ibar^{G_n}(t)=\P(\Xbar^\Tmc_\root(t)=I)$. By Theorem \ref{thm:SEIR_ODE}, we can characterize the transition rates of  $\Xbar^{\Tmc}_\root(t)$, given in \eqref{eq:SEIR-probs} as the solution to the system of ODEs \eqref{eq:ODE-SEIR}-\eqref{eq:ODE-SEIR_initial}. We can solve these ODEs to obtain an expression for $Q_{S,S;k},\ Q_{S,E;k},$ in terms of $g_S,\ g_E$ and $g_I$ (defined in \eqref{eq:SEIR-probs}) which, along with $G_I(t)=\int_0^tg_I(s)ds$ for $t\in[0,\infty)$, by Theorem \ref{thm:gggG}, solve the ODEs \eqref{eq:gggG-Odes}-\eqref{eq:gggG-Odes_initial}. Observing that $Q_{S,b}(t)=\sum_{k\in\natzero}\offspring(k)Q_{S,b;k}(t)$ for all $t\in[0,\infty)$ and $b\in\{S,E\}$, and noting that
\begin{equation*}
    \begin{alignedat}{2}
       & \P(\Xbar^\Tmc_\root(t)=S)& &=s_0 Q_{S, S}(t),
            \\ &\P(\Xbar^\Tmc_\root(t)=E)& &=s_0 Q_{S, E}(t) + e_0 Q_{E, E}(t)
            \\ & \P(\Xbar^\Tmc_\root(t)=I)& &=s_0 Q_{S, I}(t) + e_0 Q_{E, I}(t) + i_0  Q_{I, I}(t)
    \end{alignedat}
\end{equation*}
establishes the theorem.
   \end{proof}

\subsection{Proofs related to the Outbreak Size} \label{sec:proofs_outbreak}
In this section, we prove Theorem \ref{thm:SIR-outbreak} and Theorem \ref{thm:SEIR-outbreak}, which characterize the large $n$ limit of the total fraction of individuals still susceptible at the end of an  SIR or SEIR outbreak on the locally-tree like graph sequences we consider. Recall the standing assumptions made at the beginning of Section \ref{sec:main_proofs}.
We start by introducing some notation to simplify the exposition. First, define 
\begin{equation}
    \label{eq-undd}
  \underline{d}_\sizeb:=\min\{d\in\natzero\ : \ \sizeb(d)>0\}, 
\end{equation}
and recalling that $M_\sizeb$  is the moment generating function of $\sizeb$, set
\begin{align}\label{eq:def_Phi}
    \begin{split}
        \Phi(z):=\Phi_\sizeb(z):= \frac{M_\sizeb'(- z)}{M_\sizeb(- z)},
        \quad z \in [0,\infty). 
    \end{split}
\end{align}
For all $z\in[0,\infty)$ we have $M_\sizeb(-z)=\E_\sizeb[e^{-dz }]\leq \E_\sizeb[1]=1$. Furthermore, for $z\in[0,\infty)$, $M_\sizeb'(-z)= \sum_{k=1}^\infty k e^{-k z} \sizeb(k) $, where the interchange of the sum and derivative is justified because $ k e^{-k z} \leq k$ and  $\sizeb$ has finite mean. We start with an elementary lemma.

\begin{lemma} \label{lemm:Phi_props}
     $\Phi_\sizeb:[0,\infty) \rightarrow [0,\infty)$ is continuous and satisfies the following properties:
    \begin{enumerate}[label=(\roman*)]
        \item $\Phi_\sizeb(0)=\E_\sizeb[d]$;
        \item $\lim_{z\rightarrow\infty}\Phi_\sizeb(z)=\underline{d}_\sizeb$;
        \item $\Phi_\sizeb(z)$ is non-increasing in $z\in[0,\infty)$, and strictly decreasing if for every $j\in\natzero$, $\sizeb\neq \delta_{j}$.
    \end{enumerate}
    \end{lemma}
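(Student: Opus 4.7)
The plan is to observe that $\Phi_\sizeb(z)$ is precisely the mean of an exponentially tilted version of $\sizeb$, and to then read off all three properties from this probabilistic interpretation. Specifically, for each $z\in[0,\infty)$ define the tilted distribution
\begin{equation*}
    \sizeb_z(k) := \frac{\sizeb(k)e^{-kz}}{M_\sizeb(-z)}, \qquad k\in\natzero,
\end{equation*}
which is well-defined since $0<M_\sizeb(-z)\le 1$ for all $z\ge 0$. Then directly from \eqref{eq:def_Phi},
\begin{equation*}
    \Phi_\sizeb(z)=\sum_{k=0}^\infty k\,\sizeb_z(k)=\E_{\sizeb_z}[d].
\end{equation*}
Continuity of $\Phi_\sizeb$ on $[0,\infty)$ follows from continuity of $M_\sizeb$ and $M_\sizeb'$, together with $M_\sizeb(-z)>0$; interchange of sum and derivative is justified by the dominated convergence theorem using the bound $ke^{-kz}\le k$ combined with the finite first (in fact third) moment assumption on $\sizeb$.

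For (i), at $z=0$ we have $\sizeb_0=\sizeb$, so $\Phi_\sizeb(0)=\E_\sizeb[d]$. For (ii), factor out $e^{-\underline{d}_\sizeb z}$ in both numerator and denominator of $\sizeb_z(k)$: as $z\to\infty$, the ratio $\sizeb_z(k)$ tends to $0$ for every $k>\underline{d}_\sizeb$ and to $1$ for $k=\underline{d}_\sizeb$, so the tilted distribution converges pointwise (and hence in total variation, since both are probability measures on $\natzero$) to $\delta_{\underline{d}_\sizeb}$. Since the means of $\sizeb_z$ are uniformly bounded (by $\Phi_\sizeb(0)=\E_\sizeb[d]<\infty$, which will be checked via monotonicity in (iii)), we conclude $\Phi_\sizeb(z)\to \underline{d}_\sizeb$.

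For (iii), the key computation is
\begin{equation*}
    \Phi_\sizeb'(z)=\frac{-M_\sizeb''(-z)M_\sizeb(-z)+(M_\sizeb'(-z))^2}{M_\sizeb(-z)^2}=-\left(\E_{\sizeb_z}[d^2]-\E_{\sizeb_z}[d]^2\right)=-\Var_{\sizeb_z}(d).
\end{equation*}
Thus $\Phi_\sizeb'(z)\le 0$ on $[0,\infty)$, giving monotonicity. Moreover, $\Var_{\sizeb_z}(d)=0$ if and only if $\sizeb_z$ is a Dirac mass, which holds if and only if $\sizeb$ itself is a Dirac mass (since $\sizeb_z$ and $\sizeb$ are mutually absolutely continuous). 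Hence under the hypothesis $\sizeb\neq\delta_j$ for every $j\in\natzero$, we have $\Phi_\sizeb'(z)<0$ for all $z\in[0,\infty)$ and so $\Phi_\sizeb$ is strictly decreasing.

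No step here is truly delicate; the main thing to be careful about is rigorously justifying the termwise differentiation of $M_\sizeb$ and the limit in (ii). Both reduce to invoking dominated convergence with the explicit envelope $ke^{-kz}\le k$ (for differentiation near a given $z_0>0$, use $ke^{-kz_0/2}$ as the envelope) together with the finite mean of $\sizeb$. Everything else is an elementary manipulation of the tilted-mean identity.
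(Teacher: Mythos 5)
Your tilted-measure framing is sound and, for (i) and (iii), essentially reproduces the paper's argument in different clothing: the identity $\Phi_\sizeb'(z)=-\Var_{\sizeb_z}(d)$ is exactly the statement that $z\mapsto\log M_\sizeb(-z)$ is convex, which is how the paper proves (iii) (via H\"older / log-convexity of the Laplace transform, which has the minor advantage of not requiring a second moment of $\sizeb$; here that moment is available anyway since $\offspring$ has a finite third moment). Your characterization of the strict-monotonicity case via mutual absolute continuity of $\sizeb_z$ and $\sizeb$ is correct.

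There is, however, a genuine gap in your proof of (ii). You establish that $\sizeb_z\to\delta_{\underline{d}_\sizeb}$ in total variation and then conclude that the means converge because the means are uniformly bounded. That implication is false in general: convergence in distribution (even in total variation) together with uniformly bounded means does not give convergence of means — take $\mu_n=(1-\tfrac1n)\delta_0+\tfrac1n\delta_n$, which converges in total variation to $\delta_0$ with all means equal to $1$. What you need is uniform integrability of $\{d\}$ under $\{\sizeb_z\}_{z\ge 1}$, or simply a direct computation. Both are easy here: after factoring $e^{-\underline{d}_\sizeb z}$ out of numerator and denominator one has
\begin{equation*}
\Phi_\sizeb(z)=\frac{\sum_{k\ge \underline{d}_\sizeb}k\,\sizeb(k)e^{-(k-\underline{d}_\sizeb)z}}{\sum_{k\ge \underline{d}_\sizeb}\sizeb(k)e^{-(k-\underline{d}_\sizeb)z}},
\end{equation*}
and for $z\ge 1$ the numerator's summands are dominated by $k\,\sizeb(k)e^{-(k-\underline{d}_\sizeb)}$, which is summable since $\sizeb$ has finite mean; dominated convergence then sends the numerator to $\underline{d}_\sizeb\,\sizeb(\underline{d}_\sizeb)$ and the denominator to $\sizeb(\underline{d}_\sizeb)>0$, giving the limit $\underline{d}_\sizeb$. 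This is precisely the route the paper takes (splitting the cases $\underline{d}_\sizeb=0$ and $\underline{d}_\sizeb>0$). With that one step repaired, your proof is complete.
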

     \begin{proof} 
     The property \textit{(i)}  follows immediately from the relation $\Phi_\sizeb(0)=\E_\sizeb[d]/\E_\sizeb[1]=\E_\sizeb[d]$. 
     
     The stated continuity of $\Phi_\sizeb$ follows from the dominated convergence theorem and the fact that $\sizeb$ has finite mean, which follows from \eqref{eq:sizab} and Assumption \ref{assu:graph_sequence}. 
        In turn, by the  dominated convergence theorem, the latter implies that  $\lim_{z\rightarrow\infty}\E_\sizeb[d e^{-d z}]=0$. If $\underline{d}_\sizeb=0$ then $\lim_{z\rightarrow\infty}\E_\sizeb[e^{-d z}]=\sizeb(0)>0$, and by \eqref{eq:def_Phi} it follows that $\lim_{z\rightarrow\infty} \Phi_\sizeb(z)=0=\underline{d}_\sizeb$. On the other hand, if $\underline{d}_\sizeb>0$, then
        \begin{align}\label{eq:Phi_sizeb(infty)}
\lim_{z\rightarrow\infty}\Phi_\sizeb(z)=  \lim_{z\rightarrow\infty}\frac{\underline{d}_\sizeb e^{- \underline{d}_\sizeb z } + \sum_{j=\underline{d}_\sizeb+1}^\infty j e^{-jz}}{ e^{- \underline{d}_\sizeb z } + \sum_{j=\underline{d}_\sizeb+1}^\infty  e^{-jz}} = \lim_{z\rightarrow\infty} \frac{\underline{d}_\sizeb e^{- \underline{d}_\sizeb z }}{ e^{- \underline{d}_\sizeb z }}= \underline{d}_\sizeb.
        \end{align}
This proves \textit{(ii)}.

Next, observe that \begin{equation*}
    \Phi_\sizeb(z)=-\frac{d}{dz} \log M_\sizeb(-z)= \frac{d}{d(-z)} \log M_\sizeb(-z).
\end{equation*}
Since the moment generating function of any measure in $\Pmc(\R)$ is log-convex (which follows from an application of H\"older's inequality), and strictly log-convex unless the measure is equal to $\delta_x$ for $x\in\R$,  \textit{(iii)} follows. 
\end{proof}

We now prove Theorem \ref{sec:SIR-outbreak}.
 \begin{proof}[Proof of Theorem \ref{thm:SIR-outbreak}]
Let $f_I$ and $f_S$ be as in \eqref{eq:probs} and set $F_I(t)=\int_0^t\beta_sf_I(s)ds$.
By \eqref{eq:s_i_infinity}, $s^{(\infty)}(t)=s_0 M_\offspring(-F_I(t))$. By the dominated convergence theorem,  $z \mapsto M_\offspring(-z)=\sum_{k=0}^\infty\offspring(k)e^{-z k}$ is continuous on $(0,\infty)$. 

We now turn to the study of  the large-time limit of $F_I$. 

By Theorem \ref{thm:SEIR_main}, $(f_S,$ $f_I,$  $F_I)$ satisfy the ODE system  \eqref{eq:f_If_SF_I}.
For any $a\in[0,1]$ and $ b\in(0,\infty)$, the point $(a,0,b)$ is a fixed point of the system. We claim that as $t\rightarrow\infty$, $(f_S(t),$ $ f_I(t),$ $ F_I(t))$ converges to one such point, and then identify the corresponding $b$ as the solution of an equation.
Near any $t\geq 0$ such that $f_I(t)>0$, $F_I$ is strictly increasing,
and thus it is invertible.  Let $F_I(\infty):=\lim_{t\rightarrow \infty}F_I(t)$, which exists since $F_I$ is non-decreasing. We can change variables for $F\in[0,F_I(\infty)] $ and write $\x(\f):= f_I( F_I^{-1}(\f))$ and  $\y(\f):= f_S( F_I^{-1}(\f))$. We write $\beta^\ast$ (resp. $\rho^\ast$) for the composition of $\beta$ (resp. $\rho$) with $F_I^{-1}$.  Recalling the definition of $\Phi$ in \eqref{eq:def_Phi}, we rewrite the first two equations in \eqref{eq:f_If_SF_I} as 
\begin{equation} \label{eq:ODE-xyf}
    \begin{cases}
        \y'= \y (1-\Phi)
        \\ \x'= \y \Phi  - (1+\frac{\rho^\ast}{\beta^\ast}) +\x,
    \end{cases}
\end{equation}

Since $F_I(0)=0$, and $f_S(0)=s_0$, we can solve the first equation to obtain $\log (\y(\f)/s_0)= \f+\log M_\sizeb(-\f)$, which is equivalent to 
\begin{equation}\label{eq:yF}
    y(F)=s_0M_\sizeb(-F)e^F.
\end{equation}  
Substituting this into the second equation in \eqref{eq:ODE-xyf}, we obtain a linear ODE for $\x$. Recalling that $x(0)=f_I(0)=i_0$ and that $i_0+s_0=1$, we solve this equation to obtain
\begin{align}
    \begin{split}
    \label{eq:solve-x}
    \x(\f) &=i_0e^{\f} + e^{\f}\int_0^Fs_0M_\sizeb(-z)\Phi(z)dz - e^{\f}\int_0^{\f} e^{-z}\left(1+\frac{\rho^\ast(z)}{\beta^\ast(z)}\right)dz
    \\ & = i_0e^{\f} + e^{\f}s_0 (1-M_\sizeb(-F)) - e^{\f}\int_0^{\f} e^{-z}\left(1+\frac{\rho^\ast(z)}{\beta^\ast(z)}\right)dz
    \\& = e^{\f}-\y(\f)  -e^{\f} (1-e^{-\f}) - e^{\f}\int_0^{\f}e^{-z}\frac{\rho^\ast(z)}{\beta^\ast(z)}dz     
    \\ &= 1-\y(\f) - e^{\f}\int_0^{\f}e^{-z}\frac{\rho^\ast(z)}{\beta^\ast(z)}dz,
    \end{split}
\end{align} 
where in the second line we used the fact that $M_\sizeb(0)=1$, and in the first and third line we applied \eqref{eq:yF}.

We now claim that \eqref{eq:solve-x} shows that $F_I(\infty)<\infty$.   Since $F_I(t)=\int_0^t \beta_s f_I(s)ds$ and $\beta$ satisfies Assumption \ref{assu:beta_rho}, this implies that  $\lim_{t\rightarrow\infty}f_I(t)=0$.
First, observe that, if there exists $s\in[0,\infty)$ such that $f_I(s)=0$, then, by \eqref{eq:f_If_SF_I}, $f_I(t)=0$ for all $t\geq s$. 
Next, suppose for the sake of contradiction that $F_I(\infty)=\infty$. Then, for all $t\geq 0$, $f_I(t)>0$. 
By Assumption \ref{assu:beta_rho}, it then follows that $\int_0^t e^{-F_I(s)}\rho_s f_I(s) ds>0$ for all  $t>0$. By definition,  $f_S(t)\in[0,1]$, and so $\y(\f)\in[0,1]$ for all $\f\in[0,F_I(\infty))$. In particular, $\liminf_{\f\rightarrow\infty}\y(\f)\geq0$. But letting  $\f \rightarrow \infty$, 
\eqref{eq:solve-x} then implies that $\lim_{\f\rightarrow\infty}\x(\f)=\lim_{t\rightarrow \infty} f_I(t)=-\infty$, which is a contradiction. Therefore, we conclude that $F_I(\infty)<\infty$ and, thus,  $\lim_{t\rightarrow\infty} f_I(t)=0$.

Since $\lim_{t\rightarrow\infty}f_I(t)=0$, by setting $\x(F_I(\infty))=0$ in  \eqref{eq:solve-x}, we obtain 
\begin{equation} \label{eq:y_infinity}
    \y(F_I(\infty))=1-e^{F_I(\infty)}\int_0^{F_I(\infty)} e^{-z}\frac{\rho^\ast(z)}{\beta^\ast(z)}dz =1-e^{F_I(\infty)}\int_0^{\infty} e^{-\int_0^u \beta_\tau f_I(\tau)d\tau}\rho_u f_I(u)du.
\end{equation}
When combined, \eqref{eq:yF} and \eqref{eq:y_infinity} establish \eqref{eq:Finfty_time_vary}. 

If there exists   $r\in(0,\infty)$ such that $\rho_t/\beta_t=r$ for all $t$, then the integral in the rightmost expression in \eqref{eq:Finfty_time_vary} is equal to $r (1-e^{-F_I(\infty)})$, and thus  \eqref{eq:Finfty_time_vary} reduces to \eqref{eq:Finfty_const}.
Let $\Psi_r$ be given by \eqref{eq:Psi_r}.
        Using the fact that moment generating functions are log-convex, it follows that $\Psi_r$ is convex. Furthermore, $\Psi_r$ is continuous on $[0,\log(1+1/r))$, $\Psi_r(0)=\log(s_0)<0$ and $\lim_{z\rightarrow (\log (1 +1/r))^-}\Psi_r(z)=\infty$. Therefore, \eqref{eq:Finfty_const} has a unique positive solution.
This concludes the proof.
  \end{proof}

We conclude this section by providing a similar characterization of the outbreak size for the SEIR process.

 \begin{proof}[Proof of Theorem \ref{thm:SEIR-outbreak}] Let $g_S,$ $g_E,$ $g_I$ be as in \eqref{eq:SEIR-probs}, and set $G_I(t):=\int_0^t\beta_s g_I(s)ds$ for $t\in[0,\infty)$.
Note that $\sbar^{(\infty)}(t)=s_0 M_\offspring(-G_I(t))$ by \eqref{eq:sbar_infty}, and by the dominated convergence theorem, $M_\offspring$ (the moment generating function of $\offspring$) is continuous on $(-\infty,0)$. 

We now study the large-time limit of $G_I$.
By Theorem \ref{thm:gggG},  $(g_S,$ $ g_E,$ $  g_I,$ $ G_I)$ satisfy the system of ODEs \eqref{eq:gggG-Odes}.
Near any $t\geq 0$ such that $g_I(t)>0$, $G_I$ is strictly increasing, and, therefore invertible. Let $G_I(\infty):=\lim_{t\rightarrow \infty}G_I(t)$, which exists since $G_I$ is non-decreasing. We can change variables for $G\in[0,G_I(\infty)] $, write $\x(G):= g_I( G_I^{-1}(G))$, $\z(G):= g_E( G_I^{-1}(G))$ and  $\y(G):= g_S( G_I^{-1}(G))$. We write $\beta^\ast$ (resp. $\rho^\ast$, $\lambda^\ast$) for the composition of $\beta$ (resp. $\rho$, $\lambda$) with $G_I^{-1}$.  By \eqref{eq:gggG-Odes}, letting  apostrophe denote differentiation with respect to $G$, we have
\begin{equation} \label{eq:ODE-xxyg}
    \begin{cases}
        \y'= \y (1-\Phi)
        \\ \z'= \y \Phi  -  \frac{\z\lambda^\ast}{\x\beta^\ast}+\z
        \\ \x'= \frac{\z\lambda^\ast }{\x\beta^\ast}-(1+\frac{\rho^\ast}{\beta^\ast})+\x.
    \end{cases}
\end{equation}

If we let $\bar{\x}=x+z$, then $\y,\bar{\x}$ satisfy 
\begin{equation} \label{eq:ODE-barxyg}
    \begin{cases}
        \y'= \y (1-\Phi)
        \\ \bar{\x}'= \y \Phi  -(1+\frac{\rho^\ast}{\beta^\ast})+\bar{\x}
    \end{cases}
\end{equation}
with $\y(0)=s_0$, $\bar{\x}(0)=1-s_0$, which is the same initial value problem as \eqref{eq:ODE-xyf}. The same argument as that used in the proof of Theorem \ref{thm:SIR-outbreak} can then be used to conclude the proof. 
   \end{proof}

For the sake of completeness, we also include here the special case of the $2$-regular tree (i.e., the infinite line graph), with constant $\rho$ and $\beta$, where we can obtain an explicit expression for $\int_0^t \beta_s f_I(s)ds$ for all $t\in[0,\infty]$. 
\begin{proposition}
Let $T_2=$UGW$(\delta_2)$, and suppose that there exist $r,\ b>0$ such that for all $t\in [0,\infty)$, $\rho_t = r$ and $\beta_t= b$. Then, for all $t\in[0,\infty)$,
    
\begin{align}\label{eq:PSSK-prop}
    \begin{split}
        P_{S,S}(t)=\left(\frac{(1-s_0)e^{- t(b (1-s_0)+r)}+ \frac{r} {b}}{ 1-s_0+\frac{r}{b}}\right)^2,
    \end{split}
\end{align}
    and, hence, 
    \begin{equation}\label{eq:T2_proof}
        \lim_{t\rightarrow \infty} \P(X^{T_2}_\root(t)=S)= s_0 \left(\frac{1}{1+(1-s_0)\frac{b}{r}}\right)^{2}.
    \end{equation} 
\end{proposition}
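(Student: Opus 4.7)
The strategy is to specialize the ODE system \eqref{eq:f_If_SF_I}--\eqref{eq:ffF-initial} to $\offspring=\delta_2$, solve it in closed form, and then apply the already-proved formula $P_{S,S;k}(t)=e^{-kF_I(t)}$ from the proof of Theorem \ref{thm:MAIN_SIR}.

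First I would observe that when $\offspring=\delta_2$, the size-biased distribution defined in \eqref{eq:sizab} satisfies $\sizeb(k)=(k+1)\delta_2(k+1)/2$, which gives $\sizeb=\delta_1$. Substituting this into \eqref{eq:f_If_SF_I}, the ratio $\sum_k k\sizeb(k)e^{-kF_I}/\sum_j\sizeb(j)e^{-jF_I}$ simplifies to $1$, so the system collapses to
\begin{equation*}
\dot f_S=0,\qquad \dot f_I=bf_S f_I-f_I(r+b-bf_I),\qquad \dot F_I=b f_I,
\end{equation*}
with $f_S(0)=s_0$, $f_I(0)=1-s_0$, $F_I(0)=0$. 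Hence $f_S\equiv s_0$, and writing $u:=1-s_0$ the equation for $f_I$ becomes the autonomous Bernoulli equation $\dot f_I=b f_I^2-(bu+r)f_I$.

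Next I would solve this by the usual trick: let $g:=1/f_I$, so that $\dot g=(bu+r)g-b$. Integrating with initial condition $g(0)=1/u$ yields
\begin{equation*}
g(t)=\frac{b}{bu+r}+\frac{r}{u(bu+r)}e^{(bu+r)t}, \qquad f_I(t)=\frac{u(bu+r)}{bu+re^{(bu+r)t}}.
\end{equation*}
Plugging into $F_I(t)=\int_0^t bf_I(s)\,ds$ and performing the substitution $v=re^{(bu+r)s}$ leads, via the partial fraction decomposition $bu/(v(bu+v))=1/v-1/(bu+v)$, to the explicit form
\begin{equation*}
e^{-F_I(t)}=\frac{bue^{-(bu+r)t}+r}{bu+r}.
\end{equation*}

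Finally, from the proof of Theorem \ref{thm:MAIN_SIR} (specifically the solution of the system \eqref{eq:SIR-ODE}) we have $P_{S,S;k}(t)=e^{-kF_I(t)}$; since $\offspring=\delta_2$ forces $d_\root=2$ almost surely, this gives $P_{S,S}(t)=e^{-2F_I(t)}$. Dividing numerator and denominator of the bracket above by $b$ produces precisely \eqref{eq:PSSK-prop}. Letting $t\to\infty$ sends the exponential to $0$, so $P_{S,S}(\infty)=(r/b)^2/((1-s_0)+r/b)^2$, and multiplying by $s_0=\P(X_\root^{T_2}(0)=S)$ gives \eqref{eq:T2_proof}. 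No step looks to be a real obstacle here; the only point requiring any care is the partial-fraction integration computing $F_I(t)$, which is purely mechanical.
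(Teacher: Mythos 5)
Your proposal is correct and follows essentially the same route as the paper: specialize \eqref{eq:f_If_SF_I} to $\sizeb=\delta_1$ so that $f_S\equiv s_0$, solve the resulting Bernoulli equation for $f_I$ via the reciprocal substitution, integrate to get $F_I$, and conclude from $P_{S,S}(t)=e^{-2F_I(t)}$. The only cosmetic difference is that you compute $\int_0^t b f_I$ by a substitution and partial fractions where the paper integrates $f_I$ directly into a logarithmic expression, and the paper additionally pauses to rule out $f_I$ hitting zero in finite time before dividing by $f_I^2$ (a point worth one sentence in your write-up, though your explicit positive solution together with uniqueness from Proposition \ref{prop:f_If_SF_I} covers it).
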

 \begin{proof} 
    Let $P_{S,S}$, $f_I$ and $f_S$ be as in \eqref{eq:probs}. By Theorem \ref{thm:SIR_ODE},  $\dot P_{S,S}= - b  2 f_I P_{S,S}$, and therefore 
    \begin{equation}\label{eq:PSS-k=2}
        P_{S,S}(t)=\exp(-2 b  \int_0^t f_I(s)ds).
    \end{equation} Setting $\offspring=\delta_2$ and, thus,   $\sizeb=\delta_1$, the first equation in \eqref{eq:f_If_SF_I} reduces to $\dot f_S(t)=0$. Since $f_S(0)=s_0$ and $f_I(0)=i_0=1-s_0$, the second equation in \eqref{eq:f_If_SF_I} reduces to 
\begin{equation}\label{eq:fI_ODE_k=2}
    \dot f_I(t)= -( b  f_I(0)+ r) f_I(t) + b  (f_I)^2.
\end{equation}
This is a Bernoulli equation that can be solved explicitly. The constant $0$ function is a solution. 

For the rest of this proof, we  assume that $f_I(0)\in (0,1)$. Let $ a=-( b  f_I(0)+ r)$, so that \eqref{eq:fI_ODE_k=2} is $\dot f_I =  a f_I + b  f_I^2$. Let $\tau:=\inf\{t>0\ : f_I(t)=0 \}$. For $t\in[0,\tau)$, we can divide both sides of the ODE by $(f_I)^2$.

\begin{equation}\label{eq:ode_k=2}
    \frac{\dot f_I}{(f_I)^2}-
    \frac{ a}{f_I}= b .
\end{equation}

If we set $u(t)=1/f_I(t)$, for $t\in[0,\tau)$,  then $\dot u = - (f_I)^{-2} \dot f_I$ and the ODE in \eqref{eq:ode_k=2} takes the form
\begin{equation*}
    \dot u + a u =-b.
\end{equation*}
This is a linear equation whose explicit solution is
\begin{equation}\label{eq:u(t)}
    u(t)=\frac{ b }{ a}(e^{-t a}-1 )+ e^{-t a}u(0),
\end{equation}
which does not blow up in finite time, and therefore $\tau=\infty$.
Since $f_I(t)=1/u(t)$, \eqref{eq:u(t)} implies 
\begin{equation*}
    f_I(t)=\frac{f_I(0)(f_I(0)+\frac{ r}{ b })}{f_I(0)+\frac{ r}{ b } e^{( b  f_I(0)+ r)t}},
\end{equation*}
which can be integrated to conclude that
\begin{equation*}
\int_0^tf_I(s)d s=t\left(f_I(0)+\frac{ r}{ b }\right) -\frac{1}{ b }\log\left(f_I(0)+\frac{ r}{ b }e^{(b  f_I(0)+ r)t}\right) + \frac{1}{ b }\log\left(f_I(0)+\frac{ r}{ b }\right).
 \end{equation*}
This, combined with \eqref{eq:PSS-k=2}, yields \eqref{eq:PSSK-prop}. Since $\P(X_\root^{T_2}(t)=S)=s_0P_{S,S}(t)$, letting $t\rightarrow\infty$, we obtain \eqref{eq:T2_proof}. \end{proof}

\section{Proof of the Conditional Independence Property} \label{sec:cond_ind_proof}

In Section \ref{sec:proof_ind}, we prove the conditional independence property  stated in Proposition \ref{prop:cond_ind_prop} and the symmetry property stated in Proposition \ref{prop:symmetry}.  The proof relies on a certain change of measure result established in \cite{Ganguly2022interacting}, which we first summarize in Section \ref{sec:change-measure}. Throughout, $\offspring\in\Pmc(\natzero)$ has finite third moment, $\Tmc$ is a UGW($\offspring$) tree, $\alpha\in[0,1]$ is an interpolation parameter, the rates $\beta$, $\lambda$ and $\rho$ satisfy Assumption \ref{assu:beta_lambda_rho}, and $\xi^{\Tmc,\alpha}$ is the hybrid S(E)IR process solving  \eqref{eq:xi-SDE}, with initial states satisfying Assumption \ref{assu:initial_z}.
\subsection{A Radon–Nikodym derivative} \label{sec:change-measure}
We let $\mu:=\law(\xi^{\Tmc,\alpha})$ and $\mu_{t-}=\law(\xi^{\Tmc,\alpha}[t))$ for $t\in(0,\infty)$.  
Given $U\subset\V$ and $t\in(0,\infty)$, let $\Dmc^U_{t-}:=\Dmc([0,t):\stateSS_\extra^U)$ be the set of càdlàg functions $[0,t)\rightarrow \stateSS_\extra^U$. We start with two technical definitions. 

\begin{definition}
Given $y\in\Dmc_\extra$ and $t\in[0,\infty)$ we let $\text{Disc}_t(y):=\{s\in(0,t]\ : \ y(s-)\neq y(s)\}$.
    We say that $x\in\Dmc_\extra^\V$ is \textit{proper} if for every $u,v\in\V$ and $t\in[0,\infty)$, $\text{Disc}_t(x_v)\cap \text{Disc}_t(x_u) =\emptyset$.
\end{definition}

\begin{definition}
    Fix $U\subset \V$ finite,  $t\in(0,\infty)$ and suppose that $x\in\Dmc_{t-}^U$ is proper and $\text{Disc}_\infty(x):=\cup_{s\in(0,\infty)}\text{Disc}_{s}(x)$ can be ordered as a strictly increasing sequence $\{t_k(x)\}$. Then the \textit{jump characteristics} of $x$ are the elements $\set{ (t_k(x),j_k(x),v_k(x))}\subset (0,\infty)\times \Jmc \times U$ where for each $k\in\N$ with $k\leq |\text{Disc}_\infty(x)|$, $v_k=v_k(x)$ is a vertex in $U$ such that $x_{v_k}$ is discontinuous at time $t_k(x)$, and $j_k(x)$ is the size of the jump $x_{v_k}(t_k(x))-\lim_{h\rightarrow0^+}x_{v_k}(t_k(x)-h)$.
\end{definition}
Given $U\subset \V$ and $t\in(0,\infty)$, we also define a function $\psi$ from the set of functions $[0,t)\rightarrow \stateSS$ into $\{0,1\}$ by
\begin{equation}
    \label{eq:psi}
    \psi(x)= \one_{\set{x\in \Dmc_{t-}^U}} \one_{\{\{v\in U \ :\ x_v(0)\neq \extra\} \text{ is a locally finite tree}\}}.
\end{equation}
We also recall that $\V_n=\{\root\} \cup (\cup_{k=1}^n \N^k)$.

We now state a change of measure result that is established, for general interacting jump processes, in \cite{Ganguly2022interacting}. In the sequel, the exact definition of the reference processes $\xihat^n$ presented below will not be important, and so we state the following proposition to summarize some key properties we use.

\begin{proposition} \label{prop:derivatives}
 For each $n\in\N \setminus\set{1},$ there exists an $\stateSS_\extra^\V$-valued process $\xihat^n:=\xihat^{n,\alpha}$ such that for any $t\in(0,\infty)$, $A,B\subset \V$ with 
 $\partial^\V A \subset \VV_{n-1}$  and $(A\cup\partial^\V A)\cap B=\emptyset$, 
\begin{equation}\label{eq:cond_ind_reference}
     \xihat^n_A[t) \perp \xihat^n_B[t) \ | \ \xihat^n_{\partial^\V A}[t).
 \end{equation}
 Furthermore,  $\xihat^n_{\V_n}$ is almost surely proper and its jump characteristics $\{(t^n_i,v^n_i,j^n_i)\}$ are well-defined.  Moreover, for every $t\in(0,\infty)$,   $\muhat^n_{t-} :=\law(\xihat^n[t))$  has the property that, almost surely
 \begin{equation}\label{eq:derivative}
     \frac{d\mu_{t-}}{d\muhat_{t-}^n}(\xihat^n[t))=\psi(\xihat^n[t)])\exp\left( -\sum_{ \substack{v\in \V_n\\ j=1,2}}\int_{(0,t)}(q_\alpha(j,s,\xihat^\partial_v,\xihat^n_{\partial_v})-1)ds\right)\prod_{0<t_i^n<t} q_\alpha(j^n_i,t_i^n,\xihat^n_{v_j^n},\xihat^n_{\partial_{v_i^n}}),
 \end{equation}
 where $\psi$ is defined in \eqref{eq:psi}, and $q_\alpha$ is given in \eqref{eq:S(E)IR-rate}.
\end{proposition}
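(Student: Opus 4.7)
The plan is to construct $\xihat^n$ as a partially decoupled modification of $\xi^{\Tmc,\alpha}$ in which every particle $v \in \V_n$ evolves autonomously with unit jump intensity in each direction $j \in \Jmc$, while particles in $\V \setminus \V_n$ retain the original interactive dynamics of \eqref{eq:xi-SDE}. Concretely, I would define $\xihat^n$ as the pathwise unique solution to the jump SDE obtained from \eqref{eq:xi-SDE} by replacing the interactive rate $q_\alpha(j,s,\xi_v,\xi_{\partial^\V_v})$ by the constant rate $1$ for every $v \in \V_n$, keeping the original rates for $v \in \V \setminus \V_n$, the initial data $z^{p_0}$ from Assumption \ref{assu:initial_z}, and the driving Poisson noises $\{\textbf{N}_v\}_{v \in \V}$. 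This couples $\xi^{\Tmc,\alpha}$ and $\xihat^n$ on a common probability space and makes the coordinates of $\xihat^n$ indexed by $\V_n$ mutually independent functionals of the noises restricted to those vertices.

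Existence, pathwise uniqueness, and properness of $\xihat^n$ follow by the same finite-dissociability argument used in Lemma \ref{lem:S(E)IR-exist} (via \cite[Proposition 5.1]{Ganguly2022hydrodynamic}), since the modified rates on $\V_n$ are bounded constants and the UGW$(\offspring)$ tree satisfies Assumption \ref{assu:graph_sequence}. The well-defined enumeration of jump characteristics $\{(t_i^n, v_i^n, j_i^n)\}$ of $\xihat^n_{\V_n}$ follows from the almost sure absence of simultaneous atoms among the mutually independent Poisson measures $\textbf{N}_v$. For the conditional independence \eqref{eq:cond_ind_reference}, the key point is that for each $v \in \V_{n-1}$ the trajectory $\xihat^n_v[t)$ is a deterministic functional of $(z^{p_0}_v, \textbf{N}_v[t))$ alone, so $\partial^\V A \subset \V_{n-1}$ acts as a ``hard'' separator: conditioning on $\xihat^n_{\partial^\V A}[t)$ decouples the dynamics in $A$ (driven by $\{\textbf{N}_v\}_{v \in A}$ together with the conditioned boundary trajectories) from those in $B$ (driven by $\{\textbf{N}_v\}_{v \in B}$ together with their own boundary, which lies on the opposite side of $\partial^\V A$ in the tree $\V$). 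A standard Markov-random-field argument, in the spirit of \cite[Theorem 3.7]{Ganguly2022interacting}, then yields \eqref{eq:cond_ind_reference}.

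The main step is the Radon-Nikodym formula \eqref{eq:derivative}, which I would prove by a Girsanov-type change of measure for jump SDEs. Under $\muhat^n$, the compensator of the counting measure associated with coordinate $v \in \V_n$ and direction $j \in \Jmc$ is $ds$, while under $\mu$ it is $q_\alpha(j,s,\xi_v,\xi_{\partial^\V_v})\,ds$. The standard density formula for Poisson random measures then delivers the stated product form: the exponential $\exp(-\sum_{v \in \V_n, j}\int_0^t(q_\alpha - 1)\,ds)$ arises from the integrated intensity difference, and the product $\prod_i q_\alpha(j_i^n, t_i^n, \xihat^n_{v_i^n}, \xihat^n_{\partial^\V_{v_i^n}})$ comes from the ratio of intensities evaluated at the jump times of $\xihat^n_{\V_n}$, with the indicator $\psi$ restricting to the event on which $\mu$ is supported. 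I expect the principal obstacle to be the rigorous verification of this Girsanov identity on the infinite-dimensional product space $\Dmc_\extra^\V$: one must check that contributions from coordinates outside $\V_n$ cancel out of the density (since $\mu$ and $\muhat^n$ share those dynamics given the $\V_n$ trajectories), and that potential accumulation of jumps is ruled out via the finite dissociability of $\Tmc$. These technical issues are precisely the content of the general framework developed in \cite{Ganguly2022interacting}, from which \eqref{eq:derivative} follows by specialization to the hybrid S(E)IR process.
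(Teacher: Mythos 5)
There is a genuine gap in the construction you propose, and it sits exactly where the reference process has to do real work. You keep the initial configuration $z^{p_0}$ of Assumption \ref{assu:initial_z} (so the presence marks still realize a UGW$(\offspring)$ tree) and only decouple the jump dynamics on $\V_n$. Under that process the conditional independence \eqref{eq:cond_ind_reference} is false: take $A=\T_1=\{1w: w\in\V\}$, $B=\T_2$, so $\partial^\V A=\{\root\}$. The indicators $\one_{\{1\in\Tmc\}}=\one_{\{N_\root\ge 1\}}$ and $\one_{\{2\in\Tmc\}}=\one_{\{N_\root\ge 2\}}$ are both functions of the root's offspring count $N_\root$, and the root's trajectory $\xihat^n_\root[t)$ (a functional of its own initial state and its own clock under the decoupled dynamics) carries no information about $N_\root$; hence $\Cov(\one_{\{1\in\Tmc\}},\one_{\{2\in\Tmc\}}\mid \xihat^n_\root[t))=\P(N_\root\ge 2)\P(N_\root=0)>0$ for, say, Poisson offspring. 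So your claim that the $\V_n$-coordinates are ``mutually independent functionals of the noises restricted to those vertices'' overlooks the dependence injected by the random tree, and the ``hard separator'' argument does not go through. A telltale sign is the factor $\psi$ in \eqref{eq:derivative}: under your construction $\psi(\xihat^n[t))=1$ almost surely (the UGW configuration is a.s. a locally finite tree), so it would be vacuous, whereas the proof of Lemma \ref{lemm:density_factors} manifestly relies on $\psi$ being a nontrivial product of per-vertex indicators (parent present, finitely many present children). The reference process of \cite{Ganguly2022interacting} must therefore also replace the UGW presence marks by a product-form law across vertices, with $\psi$ (and the per-vertex factorization) encoding the restriction back to tree configurations; the Radon--Nikodym derivative has to account for this change of initial law, not only for the change of jump intensities.

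The Girsanov part of your sketch --- unit compensators under $\muhat^n$ on $\V_n$, the exponential of the integrated intensity gap, the product of intensity ratios at jump times, and the cancellation of the coordinates outside $\V_n$ --- is the right mechanism for the dynamical part of \eqref{eq:derivative} and matches what the cited framework does. Note also that the paper's own proof is purely a verification-plus-citation: it checks that Assumption \ref{assu:beta_lambda_rho} and Assumption \ref{assu:graph_sequence} imply the hypotheses of \cite{Ganguly2022interacting} (via \cite[Theorem C.2]{Ganguly2022hydrodynamic} for well-posedness) and then quotes Proposition 4.4, Lemma 4.8 and Corollary 4.11 of that paper for \eqref{eq:cond_ind_reference}, properness, and \eqref{eq:derivative} respectively. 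If you want to prove the proposition from scratch rather than cite it, the missing ingredient is the correct product-form construction of the reference initial condition.
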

\begin{proof}
    An explicit definition of the processes $\xihat^n$ as a solution of a SDE related to \eqref{eq:xi-SDE} is given in \cite[(4.3)]{Ganguly2022interacting} by substituting the rate function $q_\alpha$ to the rate functions $r^v$ used therein. Assumption 4.1 in \cite{Ganguly2022interacting}, i.e., the well-posedness of $\xihat^n$, follows from an application of \cite[Theorem C.2]{Ganguly2022hydrodynamic} on observing that \cite[Assumption C.1]{Ganguly2022hydrodynamic} holds by Assumption \ref{assu:graph_sequence}, the definition of $q_\alpha$ in \eqref{eq:S(E)IR-rate}, and the form of the driving noises in \eqref{eq:xi-SDE}.
    Assumption \ref{assu:beta_lambda_rho}
implies that  \cite[Assumption 3.1, Assumption 3.4]{Ganguly2022interacting} holds with $q_\alpha$ in place of $r^v$. 

Then, \cite[Proposition 4.4]{Ganguly2022interacting} establishes \eqref{eq:cond_ind_reference}.  By \cite[Lemma 4.8]{Ganguly2022interacting}, $\xihat^n_{\V_v}$ is proper. Finally,  \eqref{eq:derivative} holds by \cite[Corollary 4.11]{Ganguly2022interacting}.
\end{proof}

Given $n\in\N\setminus\{1\}$, $A\subset\V_n$ and $t>0$ we define
\begin{equation}
    \label{eq:E_n}
    \Emc_n^t(A ):=\{\xihat^n_{\partial^\V A}(t-)=S_{\partial^\V A }\}=\bigcap_{v\in\partial^\V A}\{\xihat^n_{v}(t-)=S\}.
\end{equation}
\subsection{Proof of Proposition \ref{prop:cond_ind_prop}}\label{sec:proof_ind} We start by establishing a factorization result for the Radon-Nikodym derivative established in \ref{sec:change-measure}. We recall that $\partial^\V_v$ denote the neighborhood in $\V$ of $v\in\V$. We also set $C_v:=C_v^\V=\{vk\}_{k\in\N}$. 

\begin{lemma} \label{lemm:density_factors}   Let $n\in\N\setminus\{1\}$ and fix
    $A,B\subset \V$ with $\partial^\V A \subset \VV_{n-1}$  and such that $A,\ \partial^\V A$ and $B$ form a partition of $\V$.  Let $\muhat^n_{t-}$ and $ \xihat^n_{t-} $ be as in Proposition \ref{prop:derivatives}. Then there exist measurable functions $\ftil_1:\Dmc_{t-}^{A\cup\partial^\V A}\rightarrow[0,\infty)$ and   $\ftil_2:\Dmc_{t-}^{B\cup\partial^\V A}\rightarrow[0,\infty)$
     such that for every $t\in(0,\infty),$
 \begin{equation}\label{eq:derivative-factors}
     \frac{d\mu_{t-}}{d\muhat^n_{t-}}(\xihat^n[t)) = \ \ftil_1(\xihat^n_{A\cup \partial^\V A}[t))\  \ftil_2(\xihat^n_{B\cup \partial^\V A}[t)), \qquad \text{a.s. on } \{\xihat^n_{\partial^\V A}(t-)=S_{\partial^\V A}\}.
 \end{equation} 
 
 \end{lemma}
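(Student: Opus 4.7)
The plan is to start from the explicit Radon--Nikodym formula \eqref{eq:derivative} and show that, on the event $\Emc_n^t(A) = \{\xihat^n_{\partial^\V A}(t-) = S_{\partial^\V A}\}$, each of its three factors -- the indicator $\psi$, the exponential, and the product over jump characteristics -- factors into a product of a function of $\xihat^n_{A \cup \partial^\V A}[t)$ times a function of $\xihat^n_{B \cup \partial^\V A}[t)$.

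The key structural input is that the S(E)IR dynamics respect the total ordering $S < E < I < R$, and this monotonicity is inherited by the reference process $\xihat^n$ since it is built from the same allowed jump directions. On $\Emc_n^t(A)$, monotonicity forces $\xihat^n_w(s) = S$ for every $w \in \partial^\V A$ and every $s \in [0,t)$. Two consequences follow. First, no jumps occur at boundary vertices in $(0,t)$, so the product over $t_i^n \in (0,t)$ involves only $v_i^n \in A \cup B$. Since by definition of the boundary no edge of $\V$ connects $A$ directly to $B$, we have $\partial^\V_{v_i^n} \subset A \cup \partial^\V A$ when $v_i^n \in A$ and $\partial^\V_{v_i^n} \subset B \cup \partial^\V A$ when $v_i^n \in B$, yielding the factorization of the product. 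Second, in the exponential factor, the sum over $v \in \V_n$ splits by partition class; the $A$- and $B$-pieces factor by the same boundary argument. For $v \in \partial^\V A$, the identity $\xihat^n_v(s-) = S$ reduces $q_\alpha(j,s,\xihat^n_v,\xihat^n_{\partial^\V_v})$ via \eqref{eq:S(E)IR-rate} to a constant multiple of $\infects(\xihat^n_{\partial^\V_v}(s-))$, and the $\partial^\V A$-neighbors of $v$ contribute $0$ to this count, so $\infects(\xihat^n_{\partial^\V_v}(s-)) = \infects(\xihat^n_{\partial^\V_v \cap A}(s-)) + \infects(\xihat^n_{\partial^\V_v \cap B}(s-))$ splits additively. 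The constant $-1$ in the integrand can be freely assigned to one of the two factors, so the exponential factors as well.

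It remains to factor $\psi(\xihat^n[t))$, which imposes that $\{v \in \V : \xihat^n_v(0) \neq \extra\}$ be a locally finite rooted subtree of $\V$. The defining conditions (root present, parent in the tree, children forming an initial segment of $\N$, finite degrees) are local in the parent--child structure, so those attached to vertices in $A$ or $B$ factor immediately by the boundary property. On $\Emc_n^t(A)$, every $v \in \partial^\V A$ satisfies $\xihat^n_v(0) = S \neq \extra$ and hence lies in the tree, so its defining conditions reduce to constraints on its parent and on its children. The one genuinely mixed condition is the ``initial segment'' requirement on children of a boundary vertex whose children lie partly in $A$ and partly in $B$; this is handled by conditioning on the boundary-vertex degrees, which are $\xihat^n_{\partial^\V A}(0)$-measurable and therefore common to both factors. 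I expect this indicator-factorization to be the most delicate step, since it is the only place where the partition structure must be reconciled with the global combinatorial tree condition; every dynamical term factors cleanly once the boundary vertices are known to be frozen at $S$.
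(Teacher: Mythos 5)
Your factorization of the dynamical terms is correct and is essentially the paper's own argument: on $\{\xihat^n_{\partial^\V A}(t-)=S_{\partial^\V A}\}$ monotonicity freezes the boundary at $S$ for all $s<t$, so boundary vertices contribute no jump factors; the product over jump characteristics splits because $\partial^\V_v\subset A\cup\partial^\V A$ for $v\in A$ and $\partial^\V_v\subset B\cup\partial^\V A$ for $v\in B$; and for $w\in\partial^\V A$ the rate reduces via \eqref{eq:S(E)IR-rate} to a multiple of $\infects(\xihat^n_{\partial^\V_w}(s-))$, which splits additively over the two sides (your remark that neighbours lying in $\partial^\V A$ contribute $0$ to this count is a nice touch that also covers adjacent boundary vertices).

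The gap is in the step you yourself flag as the delicate one, the factorization of the indicator. First, $\psi$ in \eqref{eq:psi} only requires that $\{v:\xihat^n_v(0)\neq\extra\}$ induce a locally finite tree, i.e.\ a connected subgraph of $\V$ with finite degrees; there is no ``children form an initial segment of $\N$'' condition in the Radon--Nikodym formula \eqref{eq:derivative}, so the ``genuinely mixed'' constraint you introduce is not actually present. Second, your proposed fix for it would not work even if it were: the number of children of a boundary vertex $w$ present in the tree is a function of the initial states $\xihat^n_{C_w}(0)$ of its children, which lie in $A\cup B$, so it is not $\xihat^n_{\partial^\V A}(0)$-measurable, and ``conditioning'' is in any case not an operation that yields the required pointwise factorization of a deterministic functional. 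Had a constraint coupling the presence of $w$'s children in $A$ to the presence of its children in $B$ genuinely appeared, the lemma would fail at exactly this term. The correct resolution (and the paper's) is that the tree indicator decomposes into per-vertex conditions --- each path c\`adl\`ag, each vertex having finitely many children present, and each present non-root vertex having its parent present --- and for $w\in\partial^\V A$ these do factor: local finiteness of $\{v\in C_w:\xihat^n_v(0)\neq\extra\}$ is the conjunction of the same condition over $C_w\cap(A\cup\partial^\V A)$ and over $C_w\cap(B\cup\partial^\V A)$, hence a product of two indicators, while the parent condition involves only the single edge $\{w,\pi_w\}$, which lies wholly in one of the two sides because $\V$ is a tree.
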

  \begin{proof} 
  Fix $n,\ A,\ B$ as in the statement of the lemma, and $t\in(0,\infty)$. Set $\Emc_n:=\Emc_n^t(A)$, where the latter is defined in \eqref{eq:E_n}. For $v\in\V_n$ and $x\in \Dmc_{t-}^{\V_n}$  proper define
  \begin{equation}\label{eq:gamma_v}
\gamma_v(x_v,x_{{\partial^\V_w}}):=\left[  \prod_{0<t_k(x_v)<t}q_\alpha(j_k(x_v),t_k(x_v),x_{v},x_{{\partial^\V_w}})\right] e^{ - \sum_{j=1,2}\int_{(0,t)}\left(q_\alpha(j,s,x_v,x_{{\partial^\V_w}})-1\right)ds},
     \end{equation}
where $ \set{(t_k(x_v),v,j_k(x_v))}$ are the jump characteristics of $x_v$. When $x\in\Dmc^{\V_n}_{t-}$ is not proper, set $ \gamma_v(x_v,x_{{\partial^\V_w}}):=0$. Also, for $v\in\V$,  $y: [0,t)\rightarrow \stateSS_{\extra}$, $b\in\stateSS_{\extra}$ and $z\in (\stateSS_{\extra})^\infty$, define 
\begin{equation}  \label{eq:psi_v}
    \psi_v(y,b,z):=\begin{cases}
        1-\one_{\{y(0)\neq \extra,\  b=\extra\}} & \text{if } v=\root, \ y\in \Dmc_{t-}, \ |\set{ \kappa \in \N  : \ z_\kappa\neq \extra}| < \infty,
        \\ \one_{\{y(0)\neq \extra\}} & \text{if } v\neq\root, \ y\in \Dmc_{t-}, \ |\set{ \kappa \in \N  : \ z_\kappa\neq \extra}| < \infty,
        \\ 0 & \text{otherwise.}
    \end{cases}
\end{equation}

By Proposition \ref{prop:derivatives}, the jump characteristics of $\xihat^n_{\V_n}$ are almost surely well-defined. On the event  that they exist, the jump characteristics of $\xihat^n_{\V_n}$ are a disjoint union of those of $\xihat^n_v$ for $v\in\V_n$.  We can then rewrite \eqref{eq:derivative} as
\begin{equation*}
    \frac{d\mu_{t-}}{d\muhat^n_{t-}}(\xihat^ n[t)])= \prod_{v \in \VV_n} \gamma_v(\xihat^{\partial^\V_w}[t),\xihat^n_{{\partial^\V_w}}[t)) \ \psi_v(\xihat^n_v[t),\xihat^n_{\pi_n}(0),\xihat^n_{C_v}(0))\ \text{a.s.}
\end{equation*}
Since $A,\ \partial^\V A$ and $B$ forms a  partition of $\V$, we can  further decompose the right-hand side as
\begin{equation}\label{eq:der-proof}
    \frac{d\mu_{t-}}{d\muhat^n_{t-}}(\xihat^n[t))=  \prod_{F\in\{A,\partial^\V A, B\}} \left( \prod_{v\in F\cap \V_n} \gamma_v(\xihat^n_{{\partial^\V_w}}[t),\xihat^n_{{\partial^\V_w}}[t)) \ \psi_v(\xihat^n_v[t)],\xihat^n_{\pi_v}(0),\xihat^n_{C_v}(0)) \right) \ \text{a.s.},
\end{equation}
where  for ease of notation in the sequel, we set $\pi_\root=\root$, which we can be done in \eqref{eq:der-proof} since $\psi_\root(y,b,z)$ does not depend on $b$.
The product in the inner bracket is a function of $\xihat^n_{A\cup \partial^\V A}$ when  $F=A$, and a function of $\xihat^n_{B \cup \partial^\V A}$ when $F=B$. Thus, to prove \eqref{eq:derivative-factors} it suffices to show that for each $w\in\partial^\V A$, there
exist measurable functions $\ftil^w_1:\Dmc_{t-}^{A\cup\partial^\V A}\rightarrow[0,\infty)$ and   $\ftil^w_2:\Dmc_{t-}^{B\cup\partial^\V A}\rightarrow[0,\infty)$ such that almost surely on $\Emc_n$  
\begin{equation} \label{eq:w-factors}
    \gamma_w(\xihat^n_{{\partial^\V_w}}[t),\xihat^n_{{\partial^\V_w}}[t)) \ \psi_w(\xihat^n_w[t)],\xihat^n_{\pi_w}(0),\xihat^n_{C_w}(0)) = \ftil^w_1(\xihat^n_{A\cup\partial^\V A}[t)) \ \ftil_2^w(\xihat^n_{B\cup \partial^\V A}[t)) .
\end{equation}

By the monotonicity of the S(E)IR dynamics given in \eqref{eq:mono}, on $\Emc_n$ the set of times $\{t_i(\xihat^n_w) < t \ : w\in\partial^\V A \}$ given by the jump characteristics of $\xihat^n_w$ is empty. Hence, almost surely, \begin{equation}
\label{eq:prod1}\one_{\Emc_n}\prod_{t_i(\xihat^n_w)<t}q_\alpha(j_i(\xihat^n_w),t_i(\xihat^n_w),\xihat^n_w,,\xihat^n_{{\partial^\V_w}}) = \one_{\Emc_n}.
\end{equation}
Recalling the identification of the states $(S,E,I,R)$ with $(0,1,2,3)$ and the definition  $q_\alpha$ in \eqref{eq:S(E)IR-rate}, for $w\in\partial^\V A$ and $s\in(0,t)$, on the event $\Emc_n$ we have 
\begin{align}\label{eq:der-proof-2}
    \begin{split}
q_\alpha(j,s,\xihat^n_w,\xihat^n_{{\partial^\V_w}})&= {\beta_t}\infects (\xihat^n_{{\partial^\V_w}}(s-))(\alpha\one_{\{j=1\}\}}+(1-\alpha)\one_{\set{j=2}})
        \\ &={\beta_t} \left(\sum_{\wbar \sim w}\one_{\{\xihat^n_{\wbar}(s-)=2\}}\right)(\alpha\one_{\{j=1\}\}}+(1-\alpha)\one_{\set{j=2}}).
    \end{split}
\end{align} 
For convenience of notation, set $\alpha(j):=\alpha\one_{\{j=1\}}+(1-\alpha)\one_{\set{j=2}}$. Then for $w\in\partial^\V A$, using  first  \eqref{eq:gamma_v} and \eqref{eq:prod1}, and then  \eqref{eq:der-proof-2} and the fact that $\partial^\V_{w}\subset A\cup B$,
\begin{align}\label{eq:fact_gamma_w}
    \begin{split}
        \one_{\Emc_n} & \gamma_w(\xihat^n_w,\xihat^n_{{\partial^\V_w}})
         \\ &=\one_{\Emc_n} \exp\left(-\sum_{j=1,2}\alpha(j)\int_{(0,t)}\left(\infects (\xihat^n_{{\partial^\V_w}}(s-))-1\right)ds\right)
\\ & =\one_{\Emc_n} \exp\left(\int_{(0,t)} \left(\infects(\xihat^n_{{\partial^\V_w}\cap A}(s-)) + \infects (\xihat^n_{{\partial^\V_w}\cap B }(s-)) -1\right)ds \right)
\\& =\one_{\Emc_n}  \exp\left(-\int_{(0,t)}\left(\infects(\xihat^n_{{\partial^\V_w}\cap A}(s-)) \right)ds \right) \exp\left(-\int_{(0,t)} \left(\infects(\xihat^n_{{\partial^\V_w}\cap B }(s-)) -1\right)ds\right),
    \end{split}
\end{align}
which shows that each $\gamma_w$ term in \eqref{eq:w-factors} admits the desired factorization. It only remains to show that the same holds for the $\psi_w$ term  in \eqref{eq:w-factors}. To this end, note that for $w\in\partial^\V A \setminus \{\root\}$, by \eqref{eq:psi_v},
\begin{align}\label{eq:fact_psi_w}
    \begin{split}
    &\psi_w(\xihat^n_w[t)],\xihat^n_{\pi_w}(0),\xihat^n_{C_w}(0))
        \\ &= \one_{\{\xihat^n_w[t)\in\Dmc_{t-}\}}  \one_{\{|\set{ v \in C_w  : \ \xihat^n_{w}(0)\neq \extra}| < \infty\}} (1- \one_{ \{\xihat^n_w(0)\neq \extra, \ \xihat^n_{\pi_w}(0)=\extra\}} )
         \\ &= \one_{\{\xihat^n_w[t)\in\Dmc_{t-}\}}  \one_{\{|(A\cup\partial^\V A)\cap\set{ v \in C_w  : \ \xihat^n_{v}(0)\neq \extra}| < \infty\}}\one_{\{|(B\cup\partial^\V A)\cap\set{ v \in C_w  : \ \xihat^n_{v}(0)\neq \extra}| < \infty\}} \one_{ \{\xihat^n_w(0)\neq \extra,  \xihat^n_{\pi_w}(0)=\extra)\}^c}
        \\ &= 
\psitil^{(1)}_w(\xihat^n_w[t),\xihat^n_{\pi_w}[t))  \
\psitil^{(2)}_w(\xihat^n_{A\cup\partial^\V A}[t))\  
        \psitil^{(3)}_w(\xihat^n_{B\cup \partial^\V A}[t)),
    \end{split}
\end{align}
where $\psitil^{(1)}_w$ is the product of the first and last term in the penultimate line of the display, and $\psitil^{(i)}_w$, $i=2,3$, is the $i$-th term of the  penultimate  line of the display.
Since $w\in\partial^\V A$ and $\V$ is a tree,  either $\{w,\pi_w\}\subset B\cup\partial^\V A$ or  $\{w,\pi_w\}\subset A\cup\partial^\V A$. Hence, the last line of \eqref{eq:fact_psi_w} factors as desired.
Similarly, if $\root\in\partial^\V A$, by \eqref{eq:psi_v} we have
\begin{align}\label{eq:fact_psi_root}
    \begin{split}
        & \psi_\root(\xihat^n_{\root}[t)],\xihat^n_{\pi_\root}(0),\xihat^n_{C_\root}(0))
        \\ &=  \one_{\{\xihat^n_{\root}[t)\in\Dmc_{t-}\}}  \one_{\{|\set{ v \in C_\root  : \ \xihat^n_{\root}(0)\neq \extra}| < \infty\}} \one_{\{\xihat^n_{\partial_\root}(0)\neq \extra\}}
         \\ & =  \one_{\{\xihat^n_{\root}[t)\in\Dmc_{t-}\}}  \one_{\{|(A\cup\partial^\V A)\cap\set{ v \in C_\root  : \ \xihat^n_{v}(0)\neq \extra}| < \infty\}}\one_{\{|(B\cup\partial^\V A)\cap\set{ v \in C_\root  : \ \xihat^n_{v}(0)\neq \extra}| < \infty\}} 
         \one_{\{\xihat^n_{\root}(0)\neq \extra\}}
        \\ & = \psitil^{(4)}(\xihat^n_{\root}[t))\ \psitil^{(5)}(\xihat^n_{A\cup\partial^\V A}[t))\  
        \psitil^{(6)}(\xihat^n_{B\cup \partial^\V A}[t)), 
    \end{split}
\end{align}
where $\psitil^{(4)}$ is the product of the first and last term in the penultimate line of the display, and $\psitil^{(i)}$ with $i=5,6$ is the $(i-3)$-th term of the penultimate line of the display.  Together \eqref{eq:fact_gamma_w}, \eqref{eq:fact_psi_root} and \eqref{eq:fact_psi_w} prove  \eqref{eq:w-factors} and, hence, $\one_{\Emc_n} d\mu_{t-}/d\muhat_{t-}^n$ admits the factorization stated in \eqref{eq:derivative-factors}.
     \end{proof}

We conclude this section by proving Proposition \ref{prop:cond_ind_prop}.
\begin{proof}[Proof of Proposition \ref{prop:cond_ind_prop}]
    Throughout the proof we fix $\alpha\in[0,1]$,  $\offspring\in\Pmc(\natzero)$ and $\Tmc=\text{UGW}(\offspring$), and we omit the dependence of $\xi^{\Tmc,\alpha}$ on them.
    Let $\{A,B,\partial^\V A\}$ be a partition of $\V$ with $\partial^\V A$ being finite. Pick $n\in\N$ such that  $\partial^\V A \subset \VV_{n-1}$. We define $\Amc:\Dmc_\extra^{\partial^\V A}\rightarrow\{0,1\}$ by $\Amc(x)=\one_{\set{x_v(t-)=S, \  v\in\partial^\V A}}$. We observe that $\Amc(\xihat^n_{\partial^\V A})=\one_{\Emc_{n}^t(A)}$, where the latter is defined in \eqref{eq:E_n}. Let $W$ be a bounded, $\sigma(\xi_A[t))$-measurable random variable. Adopting the convention $0/0=0$, and using Lemma \ref{lemm:density_factors} and Bayes's theorem in the first line, and the property \eqref{eq:cond_ind_reference} in the last line, we have 
\begin{align*}
    \begin{split}
        \E_{\mu}[W \Amc(\xi_{\partial^\V A}) \ |\ \xi_{\partial^\V A}[t),\ \xi_{B}[t) ] =& \frac{ \E_{\muhat^n}[W \Amc(\xihat^n_{\partial^\V A}) \frac{d\mu_{t-}}{d\muhat^n_{t-}}\xihat^n[t)\ |\ \xihat^n_{\partial^\V A}[t), \xihat^n_{B}[t) ]}{  \E_{\muhat^n}[ \frac{d\mu_{t-}}{d\muhat^n_{t-}}\xihat^n[t) |\ \xihat^n_{\partial^\V A}[t), \xihat^n_{B}[t) ]}
        \\=  &\one_{\Emc_n^t(A)}\frac{ \E_{\muhat^n}[W  \ftil_1(\xihat^n_{A\cup \partial^\V A}[t) ) \ftil_2(\xihat^n_{B\cup \partial^\V A}[t))\ |\ \xihat^n_{\partial^\V A}[t), \xihat^n_{B}[t) ]}{  \E_{\muhat^n}[ \ftil_1(\xihat^n_{A\cup \partial^\V A}[t))  \ftil_2(\xihat^n_{B\cup \partial^\V A}[t))\ |\ \xihat^n_{\partial^\V A}[t), \xihat^n_{B}[t) ]}
        \\ =&  \one_{\Emc_n^t(A)}\frac{ \ftil_2(\xihat^n_{B\cup \partial^\V A}[t)) \E_{\muhat^n}[W  \ftil_1(\xihat^n_{A\cup \partial^\V A}[t))  \ |\ \xihat^n_{\partial^\V A}[t), \xihat^n_{B}[t) ]}{ \ftil_2(\xihat^n_{B\cup \partial^\V A}[t))  \E_{\muhat^n}[ \ftil_1(\xihat^n_{A\cup \partial^\V A}[t)) \ |\ \xihat^n_{\partial^\V A}[t), \xihat^n_{B}[t) ]}
      %  \\ =& \one_{\Emc_n^t(A)}\frac{ \E_{\muhat^n}[W  \ftil_1(\xihat^n_{A\cup \partial^\V A}[t))  \ |\ \xihat^n_{\partial^\V A}[t), \xihat^n_{B}[t) ]}{  \E_{\muhat^n}[ \ftil_1(\xihat^n_{A\cup \partial^\V A}[t)) \ |\ \xihat^n_{\partial^\V A}[t), \xihat^n_{B}[t) ]}
         \\ =& \one_{\Emc_n^t(A)}\frac{ \E_{\muhat^n}[W  \ftil_1(\xihat^n_{A\cup \partial^\V A}[t))  \ |\ \xihat^n_{\partial^\V A}[t) ]}{  \E_{\muhat^n}[ \ftil_1(\xihat^n_{A\cup \partial^\V A}[t)) \ |\ \xihat^n_{\partial^\V A}[t)]}.
    \end{split}
\end{align*}
The last quotient is $\xihat^n_{\partial^\V A}[t)$-measurable. As this holds for every bounded $\xi_A[t)$-measurable random variable $W,$ we conclude that 
\begin{equation*}
     \E_{\mu}[W \Amc(\xi_{\partial^\V A}) \ |\ \xi_{\partial^\V A}[t),\ \xi_{B}[t)] = \E_{\mu}[W \Amc(\xi_{\partial^\V A})  \ |\ \xi_{\partial^\V A}[t)].
\end{equation*}
Since $\Amc(\xi_{\partial^\V A})= \one_{\{\xi_v(t-)=S \ \forall v\in\partial^\V A\}}$, if follows that
\begin{equation} \label{eq:conditional_indep_prop_inproof}
        \xi_A[t) \perp \xi_B[t) \ |\  \xi_{\partial^\V A}(t-)=S_{\partial^\V A},
    \end{equation}
 which proves the first assertion of the proposition. 

 Next, let $v\in\V$ and $\kappa\in\natzero$. Set $\dtil_v:=d_v -\one_{\{v\neq\root\}}=|C_v\cap\Tmc|$. The event $\{\dtil_v=k\}$ is clearly the same as the event $\set{d_v=\kappa+\one_{\{v\neq\root}\}}$. In the sequel, we condition on the event $\dtil_v=\kappa$. If $\kappa=0$, the set $\{\xi_{vi}(t)\}_{i=1,...,\dtil_v}$ of children of $v$ in $\Tmc$ is empty, and if $\kappa=1$, the set of children of $v$ in $\Tmc$ is a singleton. In either case, the stated conditional independence holds trivially. Suppose that $\kappa\geq 2$ and for $i=1,...,\kappa$, we fix $x_i\in \stateSS$. For $w\in\V$, let $\T_w:=\{wu\}_{u\in\V}$ denote the subtree of $\V$ rooted at $w$. By the definition of the jump rate $q_\alpha$ in \eqref{eq:S(E)IR-rate}, and the SDE characterization \eqref{eq:xi-SDE} from Lemma \ref{lem:S(E)IR-exist}, $\dtil_v=\kappa$ if and only  $\xi(0)_{vm}\neq \extra$ for all $m\in\N$ with $m\leq\kappa$ and  $\xi_{mk}(0)=\extra$ for all $k\in\N$ with $k>\kappa$. Using first this fact, and then applying \eqref{eq:conditional_indep_prop_inproof} with $A=\cup_{i=1}^\kappa \T_{vi}$, $\partial^\V A =\{v\}$ and $B=\{v\ell\ : \  \ell\in \N\cap(\kappa,\infty))$, we have  
    \begin{align}\label{eq:pmf_children}
        \begin{split}
            &\P(\xi_{v\ell}(t)=x_{\ell}\text{ for } 1\leq\ell\leq \kappa\ |\  \xi_v(t)=S,\ \dtil_v=\kappa)
            \\ &= \P(\xi_{v\ell}(t)=x_{\ell}\text{ for } 1\leq\ell\leq \kappa\ | \ \xi_v(t)=S,\ \xi_{vm}(0)\neq \extra \ \text{for } 1\leq m \leq \kappa,\ \xi_{vk}(0)=\extra \text{ for } k > \kappa)
           % \\ &= \P(\xi_{v\ell}(t)=x_{\ell}\ | \xi_v(t)=S,\ \xi_{vm}(t)\neq \extra \ \text{for } 1 \leq m \leq \kappa,\ \xi_{vk}(t)=\extra \text{ for } k > \kappa)
            \\ &= \P(\xi_{v\ell}(t)=x_{\ell} \text{ for } 1\leq\ell\leq \kappa\ |\  \xi_v(t)=S,\ \xi_{v m}(0)\neq \extra \ \text{for } 1 \leq m \leq \kappa).
        \end{split}
    \end{align}
   For each $\ell=1,...,\kappa$,  another application of \eqref{eq:conditional_indep_prop_inproof}, with $A=\T_{v\ell}$, $\partial^\V A=\{v\} $ and $B=\{v m\ : \ m\in\N\setminus\{\ell\})$ yields
   \begin{equation*}
        \label{eq:c_iid_}
        \xi_{v\ell}[t) \perp \{\xi_{vm}[t)\}_{m\in\N\setminus\{\ell\}}\ | \xi_v(t-)=S.
    \end{equation*}   
    It follows that for $\ell\in\N\cap[1,\kappa]$ and  $M\subset\{m\in\N\ :\  m\leq\kappa \}\setminus\{\ell\}$, 
\begin{align} \label{eq:cond_ind_chil_iterate}
    \begin{split}
        &\P(\xi_{v\ell}(t)=x_\ell |\  \xi_v(t)=S,\ \xi_{vi}(t)\neq \extra \ \text{for } 1\leq i\leq \kappa,\  \xi_{vm}=x_m \ \forall m\in M)
        \\&= \P(\xi_{v\ell}(t)=x_\ell |\  \xi_v(t)=S,\ \xi_{v\ell}(t)\neq \extra)
        \\&= \P(\xi_{v\ell}(t)=x_\ell |\  \xi_v(t)=S,\ {v\ell}\in\Tmc).
    \end{split}
\end{align}
    By iteratively applying \eqref{eq:cond_ind_chil_iterate}, we obtain
     \begin{align*}
        \begin{split}
            & \P(\xi_{v\ell}(t)=x_\ell\ \ \text{for } 1\leq \ell \leq \kappa  \ |\  \xi_v(t)=S,\ \xi_{vi}(t)\neq \extra \ \text{for } 1\leq i\leq \kappa)
             \\ & = \prod_{\ell=1}^{\kappa}\P(\xi_{v\ell}(t)=x_\ell\ | \xi_v(t)=S, \ v\ell\in \Tmc),
        \end{split}
    \end{align*}
    which along with \eqref{eq:pmf_children} establishes the second assertion of the theorem and concludes the proof.
\end{proof}
 
We conclude this  section by using Proposition \ref{prop:cond_ind_prop} and properties of the SDE \eqref{eq:xi-SDE} to derive Proposition \ref{prop:symmetry}.

\begin{proof}[Proof of Proposition \ref{prop:symmetry}] Fix $\vtil\in\V\setminus\{\root\}$ and, on the event $\vtil\in\Tmc$, let $\Tmctil$ be the subtree of $\Tmc$ rooted at $\vtil$, i.e., $\Tmctil:=\Tmc\cap \T_{\vtil}$ where $\T_{\vtil}:=\{\vtil w \ : \ w\in\V\}$. By Assumption \ref{assu:initial_z}, $\Tmc_{\vtil}$ is a Galton-Watson tree with offspring distribution $\offspring$ on the event $\{\vtil\in\Tmc\}$. Recall that $\xi^{\Tmc,\alpha}$ satisfies the SDE \eqref{eq:xi-SDE}, and define the modified process $\xitil$ on $\stateSS_\extra^\V$ by  $t\in[0,\infty)$,
\begin{equation}\label{eq:xitil}
\xitil_v(t)=
    \begin{cases}
        \xi^{\Tmc,\alpha}_v(t) & v\in\T_{\vtil}
        \\ \extra & v\in\V\setminus\T_{\vtil}.
    \end{cases}
\end{equation}
    Fix $t\in[0,\infty)$, and let $\Emc^t:=\{\xi^{\Tmc,\alpha}_\vtil(t-)=S\}$. By \eqref{eq:xi-SDE} and Assumption \ref{assu:initial_z}, $\xi^{\Tmc,\alpha}_{\vtil}(t-)=S$ implies $\xi^{\Tmc,\alpha}_{\vtil}\neq\extra $ and hence,  that
    \begin{equation}\label{eq:Emctilde}
    \Emc^t=\tilde{\Emc}^t:=\{\xitil_{\vtil}
    (t-)=S,\  \vtil \in\Tmc \} = \{\xitil_{\vtil}(s)=S\ : \ s\in[0,t) \},
    \end{equation}
    where the second equality follows from the monotonicity property of the S(E)IR process, see \eqref{eq:mono}. Applying Proposition \ref{prop:cond_ind_prop} with $A=\T_{\vtil}\setminus\{\vtil\}$, $\partial^\V A=\{\vtil\}$ and B=${\V\setminus\T_{\vtil}}$, it follows that $\xi^{\Tmc,\alpha}_{\T_{\vtil}\setminus\{\vtil\}}[t)$ is conditionally independent of $\xi^{\Tmc,\alpha}_{\V\setminus\T_{\vtil}}[t)$ given $\Emc^t$. Thus, by \eqref{eq:xitil} and \eqref{eq:Emctilde},
    \begin{equation}\label{eq:laws_equal}
        \law( \xi^{\Tmc,\alpha}_{\T_{\vtil}\setminus\{\vtil\}}[t)\ | \Emc^t) = \law (\xitil^{\Tmc,\alpha}_{\V\setminus\T_{\vtil}}[t) \ | \tilde{\Emc}^t).
    \end{equation}
Next, fix $m\in\N$, and define a map $\mapvtil_{m}:\T_{\vtil}\rightarrow\V$ given by $\mapvtil_m(\vtil)=\root$ and for $v\in\V$, $\mapvtil_m(\vtil (m v))=1 v$, $\mapvtil_m(\vtil (1 v))=m v$ and $\mapvtil_m(\vtil (\ell v))=\ell v$ for all $\ell\in\N\setminus\{1,m\}$, recalling that $vw$ represent concatenation of $v,w\in\V$. Then $\mapvtil_m$ defines an isomorphism of the rooted graphs $(\T_{\vtil},\vtil)$ and $(\V,\root)$. It follows from \eqref{eq:xitil} and the form of the SDE \eqref{eq:xi-SDE} that 
\begin{equation*}
    \law(\xitil_{\T_{\vtil}} [t)\ |\  \tilde{\Emc}^t)=\law ( \xi^{\Tmc,\alpha}_{\V}[t)\ |\ \xi^{\Tmc, \alpha}_\root(t-)=S)
\end{equation*}
and in particular, 
\begin{equation}\label{eq:sym-proof_4}
    \law(\xitil_{\vtil m}[t)]\ | \xitil_{\vtil}(t-)=S, \vtil m\in\Tmctil) 
       =\law(\xi^{\Tmc,\alpha}_{1}[t)]\ | \xitil_{\root}(t-)=S, 1 \in\Tmc), 
\end{equation}
which follows from the independence of $\Tmc$ from the driving Poisson processes in the SDE \eqref{eq:xi-SDE} for $\xi^{\Tmc,\alpha}$ and the fact that $\vtil m\in\Tmc$ implies  $\vtil\in\Tmc$. By the well-posedness of the SDE \eqref{eq:xi-SDE} established in Lemma \ref{lem:S(E)IR-exist}, it follows that the left-hand side of \eqref{eq:sym-proof_4} does not depend on the choice of $\vtil \in\V$ and $m\in\N$, thus proving the proposition.
\end{proof}
 %%%%%%%%%%%%%%%%%%%%

\appendix
\section{Proofs of intermediate SEIR dynamics Results}\label{sec:proof-SEIR}
In this section, we prove Theorem \ref{thm:SEIR_ODE} and Theorem \ref{thm:gggG}, thus completing the proof of Theorem \ref{thm:SEIR_main}. 
Throughout,  $\offspring\in\Pmc(\natzero)$,  $\sizeb$ is the size-biased version of $\offspring$, as defined in \eqref{eq:sizab}, and $\Tmc$ is a UGW($\offspring$) tree. We  assume that $\offspring$ has finite third moment and, as everywhere else in the paper, we assume that the rates $\beta,\ \lambda,  \ \rho : [0,\infty)\rightarrow (0,\infty)$
satisfy Assumption \ref{assu:beta_lambda_rho}. We assume that Assumption \ref{assu:initial_z} hold.

We start by proving the ODE characterization of $g_S,$ $g_E$ and $g_I$.
     \begin{proof}[Proof of Theorem \ref{thm:gggG}] Throughout  the proof in order to simplify  notation we write $\Xbar$ in lieu of $\Xbar^\Tmc=\xi^{\Tmc,1}$, the SEIR process on $\Tmc$, and $q$ in lieu of $q_1$, the rate function defined in \eqref{eq:S(E)IR-rate}. By Assumption \ref{assu:initial_z}, $g_S(0)=s_0$, $g_E(0)=e_0$ and $g_I(0)=i_0$. Clearly, $G_I(0)=0$. Therefore, the initial conditions \eqref{eq:gggG-Odes_initial} hold. By the fundamental theorem of calculus, $\dot G_I(t)= \beta_t g_I(t)$, which is the fourth equation in \eqref{eq:gggG-Odes}.

       We now turn to the derivation of the evolution of $g_I, \ g_E $ and $ g_S$. This requires keeping track of two states simultaneously since $g_I(t)$, $ g_E(t)$ and $  g_S(t)$ are conditional probabilities associated with the joint law of ${\Xbar}_1(t)$ and ${\Xbar}_\root(t)$. To start, we apply Proposition \ref{prop:filtering} with $\alpha=1$ and $U=\{\root,1\}$ to conclude that ${\Xbar}_{\root,1}$ has the same law as the jump process on the state space $\stateSS_{\extra}\times\stateSS_{\extra}$ with jump rates $\qhat_v(t,x)=\qhat_{v,1}^{\offspring,1}[\{\root,1\}](t,x)$, $v\in\{\root,1\}$, $x\in\Dmc([0,\infty),\stateSS_\extra^2)$, which satisfy, for every $t\geq0$, almost surely
       \begin{equation}\label{eq:qhat-SEIR}
           \hat{q}_{v}(t,\Xbar_{\root, 1}) = \E[q(1,t, \Xbar_v, \Xbar_{\partial_v^{\mathbb{V}}})|\Xbar_{\root, 1}[t)], \quad v \in \{\root, 1\}.
       \end{equation} 
       
       Next, we use the specific form of $q$, as defined in \eqref{eq:S(E)IR-rate} and Propositions \ref{prop:cond_ind_prop} and \ref{prop:symmetry} to obtain a more explicit description of $\qhat_v$, $v\in\{\root,1\}$. Since the probabilities $g_a(t)$, $a\in\{S,\ E,\ I\}$  are conditioned on $\Xbar_\root(t-)=S$ and $\Xbar_1(0)\neq\extra$ (and using the fact that an individual that is in state $R$ remains in that state for all subsequent times), we only need to consider the jumps $\qhat_v(t,\Xbar_{\root,1})$, $v\in\{\root,1\}$ 
       on the events $\{\Xbar_{\root,1}(t-)=(S,S)\}$, $\{\Xbar_{\root,1}(t-)=(S,E)\}$ and $\{\Xbar_{\root,1}(t-)=(S,I)\}$.
       
       For $v,w\in\set{\root,1}$ with $v\neq w$, define $\Bbar_v(t):=\beta_t\E[\Imc(\Xbar_{\partial_v\setminus\set{w}}(t-))| \Xbar_v(t-)=S]$. By the definition of the SEIR jump rates $q=q_1$ in \eqref{eq:S(E)IR-rate}, $B_v$ is the conditional cumulative rate at which the neighbors of $v$ other that $w$ infect the individual at $v$ at time $t$. By Proposition \ref{prop:cond_ind_prop},  \begin{equation}\label{eq:Bbar_v}
\Bbar_v(t)=\beta_t\E[\Imc(\Xbar_{\partial_v\setminus\set{w}})| \Xbar_v(t)=S, \Xbar_w(t)].
       \end{equation}
       Using \eqref{eq:S(E)IR-rate}, \eqref{eq:qhat-SEIR}  and Proposition \ref{prop:filtering}, and proceeding similarly as in the proof of Theorem \ref{thm:SIR-fs}, we can treat $\Xbar_{\root,1}$ as a two particle jump process driven by Poisson noises with intensity measure equal to Lebesgue measure, whose  jumps and jump rates from the states $(S,S)$ , $(S,E)$ and $(S,I)$ can be summarized as follows.
    \begin{align*}
        &\text{Jump: }  & 
  &\text{Rate at time $t$:}
  \\       (S,S) &\rightarrow (S,E)  & 
  & \Bbar_1(t)
  \\  (S,S) &\rightarrow (E,S)  & 
  & \Bbar_{\root}(t)
   \\ (S,E) &\rightarrow (E,E)  & 
  &  \Bbar_{\root}(t)
  \\ (S,E) &\rightarrow (S,I)  & 
  &{\lambda_t}
  \\ (S,I) &\rightarrow (E,I)  & 
  &{\beta_t} + \Bbar_{\root}(t)
  \\ (S,I) &\rightarrow (S,R)  & 
  &{\rho_t},
       \end{align*}
with all other rates being equal to zero.
       Next we fix $h>0$ and $t\geq0$ and obtain expressions for $g_I(t+h),\ g_E(t+h)$, and $g_S(t+h)$ in terms of $g_I(t)$, $g_E(t)$, $g_S(t)$, $ h$, ${\beta_t},$ ${\rho_t}$, and $\sizeb$.
We first consider $g_S$, defined in \eqref{eq:SEIR-probs}. Using monotonicity of the SEIR dynamics, we can write 
\begin{equation}\label{eq:g-s-pr}
    g_S(t+h)=\P({\Xbar}_1(t+h)= S,\ {\Xbar}_1(t)= S \ | \ {\Xbar}_\root(t+h)=S,\ {\Xbar}_\root(t)=S,\ 1\in\Tmc ).
\end{equation}
By an application of Lemma \ref{lem:cond-identity-lemma} with $A=\{{\Xbar}_1(t+h)=S\} $, $A'=\{{\Xbar}_1(t)=S\}$, $B=\{{\Xbar}_\root(t+h)=S\}$, $B'=\{{\Xbar}_\root(t)=S, 1\in\Tmc\}$ we obtain
    \begin{equation}\label{eq:g_S}
        g_S(t+h)= g_S(t)\frac{\P({\Xbar}_\root(t+h)=S,\ {\Xbar}_1(t+h)=S,  | \ {\Xbar}_\root(t)=S, \ {\Xbar}_1(t)=S,  \ 1\in\Tmc)}{\P({\Xbar}_\root(t+h)=S\ | \ {\Xbar}_\root(t)=S, \ 1\in\Tmc)}.
    \end{equation}
    Since $\Bbar_1(t)+\Bbar_\root(t)$ is the rate at which $\Xbar_{\root,1}(t)$ leaves the state $(S,$ $S)$, the numerator on the right-hand side of \eqref{eq:g_S} is equal to $1-h(\Bbar_1(t)+\Bbar_\root(t))+o(h)$. 
    For the denominator, observe that the rate $\qhat_\root(t,\Xbar_{\root,1})$ on the event  $\{\Xbar_\root(t-)=S, 1\in\Tmc\}$ is equal to
    \begin{align*}
        \begin{split}
   & \E[q(1,t,\Xbar_\root,\Xbar_{\partial^\V_\root}) \ | \Xbar_\root(t-)=S, 1\in\Tmc ] 
    \\ = &  \beta_t\E[\infects(\Xbar_1(t-)) \ | \Xbar_\root(t-)=S, \ 1\in\Tmc ] + \beta_t\E[\infects(\Xbar_{\partial^\V_\root\setminus\{1\}}(t-)) \ | \Xbar_\root(t-)=S, \ 1\in\Tmc ] 
    \\ = & \beta_t g_I(t-) + \beta_t \Bbar_\root(t-),
        \end{split}
    \end{align*}
    where the first equality follows from \eqref{eq:S(E)IR-rate} with $\alpha=1$, and the second follows from the definition of $g_I$ in \eqref{eq:SEIR-probs} and by \eqref{eq:Bbar_v} (on observing that the event $\{1\in\Tmc\}$ is $\Xbar_1(t)$-measurable).  Therefore, it follows that
\begin{equation*}
        g_S(t+h)= g_S(t)\frac{1-h(\Bbar_1(t)+\Bbar_{\root}(t))+o(h)}{1-h({\beta_t} g_I(t)+\Bbar_{\root}(t))+o(h)},
    \end{equation*}
Which implies
\begin{align*}
    \begin{split}
        g_S(t+h)-g_S(t)&= g_S(t)\frac{h{\beta_t} g_I(t) -h\Bbar_1(t)+o(h)}{1+o(1)}. 
    \end{split}
\end{align*}
In turn, this implies
\begin{equation}\label{eq:g_s_proof}
    \dot g_S= g_S ({\beta} g_I -\Bbar_1).
\end{equation} 

Similarly, recalling that $g_E(t+h)=\P(\Xbar_1(t+h)=E\ | \ \Xbar_\root(t+h)=S, \ 1\in\Tmc)$ from \eqref{eq:SEIR-probs}, and using the monotonicity property \eqref{eq:mono} with $\alpha=1$, by a similar derivation as \eqref{eq:g-s-pr}-\eqref{eq:g_s_proof},
\begin{align*}
    \begin{split}
g_E(t+h)&=\sum_{a=S,E}g_a(t)\frac{\P({\Xbar}_\root(t+h)=S, \ {\Xbar}_1(t+h)=E|\ {\Xbar}_\root(t)=S , \ {\Xbar}_1(t)=a,\ 1\in\Tmc)}{\P({\Xbar}_\root(t+h)=S|\ {\Xbar}_\root(t)=S ,1\in\Tmc)}
\\ &= \frac{g_S(t)(h\Bbar_1(t)+o(h)) + g_E(t)(1-h({\lambda_t}+\Bbar_{\root}(t))+o(h))}{1-h(g_I(t){\beta_t}+\Bbar_{\root}(t)) +o(h)},
    \end{split}
\end{align*}
       and, hence,
       \begin{align*}
    \begin{split}
&g_E(t+h)-g_E(t)= (1+o(1))(h g_S(t)\Bbar_1(t) -hg_E(t)({\lambda_t} -\beta_t g_I(t)) +o(h)).
    \end{split}
\end{align*}
It follows that 
    \begin{equation}\label{eq:g_e_proof}
        \dot g_E=g_S \Bbar_1 -g_E(\lambda-{\beta} g_I).
    \end{equation} 

    Next, we see that
    \begin{align*}
    \begin{split}
g_I(t+h)&=\sum_{a=S,E,I}g_a(t)\frac{\P({\Xbar}_\root(t+h)=S, \ {\Xbar}_1(t+h)=I|\ {\Xbar}_\root(t)=S , \ {\Xbar}_1(t)=a,\ 1\in\Tmc)}{\P({\Xbar}_\root(t+h)=S|\ {\Xbar}_\root(t)=S ,1\in\Tmc)}
\\ &= \frac{g_S(t)o(h^2)+ g_E(t)(h\lambda_t) + g_I(t)(1-h(\beta+\Bbar_\root+\rho)) +o(h)}{1-h(g_I(t){\beta_t}+\Bbar_{\root}(t)) +o(h)},
    \end{split}
\end{align*}
       by the monotonicity property \eqref{eq:mono} with $\alpha=1$ and by the fact that the probability that two jumps occur in an interval of length $h$ is $o(h^2)$, since the driving noises $\Nbf_{\{\root,1\}}$ as in Proposition \ref{prop:filtering} are independent Poisson point processes with intensity measure equal to Lebesgue measure. We then have
       \begin{align*}
    \begin{split}
&g_I(t+h)-g_I(t)= (1+o(1))(h g_E(t)\lambda_t -hg_I(\beta_t+\rho_t-\beta_tg_I(t)) +o(h)),
    \end{split}
\end{align*}
which implies the third equation in \eqref{eq:gggG-Odes}.

    Recalling that $G_I(t)=\int_0^t\beta_ug_I(u)du$, by the same argument as \eqref{eq:B1_1}-\eqref{eq:d_1_final} in the proof of Theorem \ref{thm:SIR-fs}, $\Bbar_1(t)$ satisfies 
    \begin{align*}
        \begin{split}
            \Bbar_1(t) & = \beta_t g_I(t )\E[ d_1-1\ | \ \Xbar_1(t-)=S ]
            \\ &= 
            \beta_t g_I  \frac{\sum_{k=0}^\infty k\sizeb(k) e^{- k G_I(t)}}{ \sum_{n=0}^\infty \sizeb(n) 
  e^{- n G_I(t)} } .
        \end{split}
    \end{align*}
    Substituting this back into \eqref{eq:g_s_proof} and \eqref{eq:g_e_proof}, we obtain the first and second equations in \eqref{eq:gggG-Odes}. This concludes the proof.
   \end{proof}

 \begin{proof}[Proof of Theorem \ref{thm:SEIR_ODE}] 
    Throughout the proof, we simplify the notation and  write ${\Xbar}$ in lieu of ${\Xbar}^{\Tmc}$. By Assumption \ref{assu:beta_lambda_rho},  the fact that $g_I$ is continuous (which follows from Theorem 
    \ref{thm:gggG}), and the fact that the ODE \eqref{eq:ODE-SEIR} is linear, the initial value problem \eqref{eq:ODE-SEIR}-\eqref{eq:ODE-SEIR_initial} has a unique solution. Clearly by \eqref{eq:SEIR-probs} the initial conditions \eqref{eq:ODE-SEIR_initial} hold.
    
To prove \eqref{eq:ODE-SEIR} we proceed similarly as in the proof of Theorem \ref{thm:SIR_ODE}. We start by considering $Q_{S,S;k}$. Fix $t\geq0$, $h>0$ and $k$ in the support of $\offspring$. Then, using the monotonicity of the SEIR process (see \eqref{eq:mono}) in the second quality, and the fact that $\Xbar_{\partial_\root}(t)=y\in\Xbar^k$ implies that $d_\root=k$, \begin{align}\label{eq:seir-proof-1}
            \begin{split}
                & Q_{S,S;k}(t+h)
                 \\ & =\P({\Xbar}_\root(t+h)=S \ | \ {\Xbar}_\root(0)=S, \ d_\root=k)
                 \\ &= \sum_{y\in\stateSS^{k}}\frac{\P(\Xbar_\root(t+h)=S, \ \Xbar_\root(t)=S, \ \Xbar_\root(0)=S,  \ \Xbar_{\partial_\root}(t)=y)}{\P(\Xbar_\root(0)=S,\ d_\root=k)}
                \\ &= \sum_{ y\in\Smc_{k,t}}\P({\Xbar}_\root(t+h)=S |  {\Xbar}_\root(t)=S,  {\Xbar}_{ \partial_\root}(t)=y)  \P({\Xbar}_{ \root}(t)=S, {\Xbar}_{ \partial_\root}(t)=y|  {\Xbar}_{\root}(0)=S, d_\root=k),
            \end{split}
        \end{align}
where $\Smc_{k,t}:=\{y\in\stateSS^k\ : \ \P(\Xbar_\root(t)=S, \Xbar_{\partial_\root}(t)=y)>0\}$. Since by \eqref{eq:S(E)IR-rate} the jump rate of a susceptible individual with neighbors $y\in\stateSS^{k}$ is equal to ${\beta_t}\infects (y)$, it follows that 
\begin{equation}\label{eq:p1-ode_SEIR}
    \P({\Xbar}_\root(t+h)=S \ |\ {\Xbar}_\root(t)=S, \ {\Xbar}_{{\partial_\root}}(t)=y)= 1-h{\beta_t}\infects (y)+o(h).
\end{equation} 
The right-hand side does not depend on the exact states of the $k-\infects(y)$ neighbors of the root that are not in state $I$. Thus, substituting  the expression in \eqref{eq:p1-ode_SEIR} into the last line of \eqref{eq:seir-proof-1} and rewriting the sum  to be over the number of infected neighbors of $\root$, 
        \begin{align}\label{eq:Qssk}
            \begin{split}
                 Q_{S,S;k}(t+h)= &\sum_{j=0}^k (1-h {\beta_t} j +o(h)) \ \P({\Xbar}_\root(t)=S,\ \infects ({\Xbar}_{ {\partial_\root}}(t))=j\ | \ {\Xbar}_{\root}(0)=S, d_\root=k) 
                \\ =& \sum_{j=0}^k (1-h {\beta_t} j +o(h)) \ \P(\infects({\Xbar}_{ {\partial_\root}}(t))=j\ | {\Xbar}_\root(t)=S, \ {\Xbar}_{\root}(0)=S,\  d_\root=k) \ Q_{S,S;k}(t) 
                \\ =& \sum_{j=0}^k (1-h {\beta_t} j +o(h)) \ \P(\infects({\Xbar}_{ {\partial_\root}}(t))=j\ |  {\Xbar}_\root(t)=S, \ d_\root=k) \ Q_{S,S;k}(t).
            \end{split}
        \end{align}
        Letting $\alpha=1$ in Proposition \ref{prop:cond_ind_prop},  it follows that $\{{\Xbar}_{i}(t) \ : \ i\sim \root \}$ are conditionally i.i.d.  given ${\Xbar}_{\root}(t)=S$ and $ d_\root=k$. For each $m\in\N\cap[1,k]$, by Proposition \ref{prop:symmetry} and an Application of  Proposition \ref{prop:cond_ind_prop} with $A=\T_m:=\{mv\ : v\in\V\}$, the subtree rooted at $m$,  $\partial^\V A=\{\root\}$ and $B=\N\setminus\set{m}$, observing that $d_\root=\sum_{\ell\in\N}\one_{\{\Xbar_\ell(0)\neq \extra\}}$, we have that
        $$\P(\Xbar_m(t)=I \ |\  \Xbar_\root(t)=S, \ d_\root=k)=\P(\Xbar_m(t)=I\ |\  \Xbar_\root(t)=S, \ m\in\Tmc)=g_I(t),$$
        where $g_I$ is defined in \eqref{eq:SEIR-probs}.
        It follows that, conditional on ${\Xbar}_\root(t)=S$ and  $d_\root=k$, $\infects({\Xbar}_{{\partial_\root}}(t))$ has binomial distribution with parameters $(k$, $g_I(t))$. Letting $Y$ be a binomial random variable with parameters $(k,$ $g_I(t))$, it follows from \eqref{eq:Qssk} that 
        \begin{align}\label{eq:p2-ode-SEIR}
        \begin{split}
              Q_{S,S;k}(t+h)&=   Q_{S,S;k}(t)  ( 1-h {\beta_t} \E[Y] +o(h)) 
              \\ & =(1-h{\beta_t} k g_I + o(h)) Q_{S,S;k}(t),
        \end{split}
        \end{align}
         and, thus, 
         \begin{align*}
         \lim_{h\rightarrow0^+} \frac{Q_{S,S;k}(t+h)-Q_{S,S;k}(t)}{h} &=   \lim_{h\rightarrow0^+} \frac{(1-h{\beta_t} k g_I(t) +o(h)-1)Q_{S,S;k}(t)}{h}\\ &= -{\beta_t} k g_I(t)Q_{S,S;k}(t),   
         \end{align*}
         which proves the first equation in \eqref{eq:ODE-SEIR}.
         The derivation of the ODEs for $ Q_{S,E;k}$ and $ Q_{S,I,;k}$ is similar and outlined below. As in the last line of \eqref{eq:seir-proof-1} write,
         \begin{equation}\label{eq:Qsek}
             Q_{S,E;k}(t+h)= \bar{\Qmc}_E(h)+ \bar{\Qmc}_S(h),
         \end{equation}
         where, for $b=\{S,E\}$,
         \begin{equation*}
         \bar{\Qmc}_b(h)=
             \sum_{j=0}^k \P({\Xbar}_\root(t+h)=E, \ {\Xbar}_\root(t)=b, \ \infects ({\Xbar}_{{\partial_\root}}(t))=j \ |  \  {\Xbar}_{\root}(0)=S,\ d_\root=k).
         \end{equation*}
         Recalling the definition of the rates $q_1$ in \eqref{eq:S(E)IR-rate} and using arguments similar to what used to derive \eqref{eq:p1-ode_SEIR}-\eqref{eq:p2-ode-SEIR}, $\bar{\Qmc}_S=(h{\beta_t} k g_I(t) +o(h) )Q_{S,S;k}(t)$ and 
        \begin{align*}
            \begin{split}
                 \bar{\Qmc}_E(h)=& \sum_{j=0}^k (1-{\lambda_t} h +o(h))  \P({\Xbar}_\root(t)=E,\ \infects ({\Xbar}_{ {\partial_\root}}(t))=j |  {\Xbar}_{\root}(0)=S, \ d_\root=k) 
                 \\ =& (1-{\lambda_t} h +o(h))  \sum_{j=0}^k \P({\Xbar}_\root(t)=E,\ \infects ({\Xbar}_{ {\partial_\root}}(t))=j |  {\Xbar}_{\root}(0)=S, \ d_\root=k ) 
                 \\ =& (1-{\lambda_t} h +o(h))
                 \P({\Xbar}_\root(t)=E \ |  {\Xbar}_{\root}(0)=S, \ d_\root=k) 
                 \\ =& (1-{\lambda_t} h + o(h)) Q_{S,E;k}(t).
            \end{split}
        \end{align*}
       Therefore, $Q_{S,E;k}(t+h)-Q_{S,E;k}(t)= hk {\beta_t} g_I(t) Q_{S,S;k}(t)- {\lambda_t} h Q_{S,E;k}(t) + o(h)$ which implies the second equation in \eqref{eq:ODE-SEIR}.
       Proceeding similarly, we obtain the relation 
       \begin{align*}
           \begin{split}
               Q_{S,I;k}(t+h) &=\sum_{b=S,E,I}\P(\Xbar_\root(t+h)=I | \Xbar_\root(t)=b, \Xbar_\root(0)=S d_\root=k)
               \\ &=  \lambda_t h Q_{S,E;k}(t) + (1- \rho_t)h Q_{S,I;k}
           \end{split}
       \end{align*}
       and 
       \begin{align*}
           \begin{split}
               Q_{E,I}(t+h)&=\sum_{b=S,E,I}\P(\Xbar_\root(t+h)=I | \Xbar_\root(t)=b, \Xbar_\root(0)=E, d_\root=k)
               \\ &=  \lambda_t h Q_{E,E}(t) + (1- \rho_t)h Q_{E,I},
           \end{split}
       \end{align*}
       which imply the third and fifth equations in \eqref{eq:ODE-SEIR}.
   Setting $r^E_t:=\lambda_t$, and  $r^I_t:=\rho_t$, for $a\in\{E,\ I\}$ we see that
       \begin{align*}
       \begin{split}
         &  Q_{a,a}(t+h)= \\ &=\sum_{y\in\stateSS ^{k}}\P({\Xbar}_\root(t+h)=a \ |\ {\Xbar}_\root(t)=a, {\Xbar}_{\partial_\root }(t)=y) \P({\Xbar}_\root(t)=a,\ {\Xbar}_{\partial_\root}(t)=y\ | \ {\Xbar}_{\root}(0)=a) 
         \\& = (1-h {r^a_t} + o(h))  \sum_{y\in\stateSS ^{k}}\P({\Xbar}_\root(t)=a,\ {\Xbar}_{\partial_\root}(t)=y\ | \ {\Xbar}_{\root}(0)=a) 
          \\& = (1-h {r^a_t} + o(h)) Q_{a,a}(t),
       \end{split}
       \end{align*}
       which proves the fourth and sixth equations in \eqref{eq:ODE-SEIR}, thus concluding the proof.
           
     \end{proof}

\section{Proof of Proposition \ref{prop:f_If_SF_I}} \label{sec:odes-well-posed}
In this section we prove the well-posedness of the ODE system  \eqref{eq:f_If_SF_I}-\eqref{eq:ffF-initial}. We start with the following elementary result.

\begin{lemma}\label{lemm:phi_lip}
    Suppose that $\offspring\in\Pmc(\natzero)$ has a finite third moment. 
    % Then, the size biased distribution $\sizeb$, defined in \eqref{eq:sizab}, has finite second moment. 
    Then $\Phi_\sizeb$, defined in \eqref{eq:def_Phi},  is Lipschitz continuous on $[0,\infty)$.
\end{lemma}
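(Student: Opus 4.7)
The plan is to show that $\Phi_\sizeb$ is continuously differentiable on $[0,\infty)$ with a uniformly bounded derivative; the Lipschitz property then follows from the mean value theorem. I would begin by setting $h(z) := M_\sizeb(-z) = \sum_{k=0}^\infty \sizeb(k) e^{-kz}$, so that \eqref{eq:def_Phi} reads $\Phi_\sizeb(z) = -h'(z)/h(z)$. From \eqref{eq:sizab} and the assumption that $\offspring$ has a finite third moment, a direct computation shows that $\sum_k k^2 \sizeb(k)$ is finite. The three series defining $h$, $h'$, and $h''$ are then uniformly dominated on $[0,\infty)$ by $1$, $\E_\sizeb[K]$, and $\E_\sizeb[K^2]$ respectively, so $h \in C^2([0,\infty))$ and, since $h$ is strictly positive, $\Phi_\sizeb \in C^1([0,\infty))$ with $\Phi_\sizeb'(z) = (h'(z)^2 - h(z)h''(z))/h(z)^2$.

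The next step is to bound $|\Phi_\sizeb'(z)|$ uniformly in $z \geq 0$. By Cauchy--Schwarz applied to the sequences $(\sqrt{\sizeb(k)}\,e^{-kz/2})_k$ and $(k\sqrt{\sizeb(k)}\,e^{-kz/2})_k$, one has $h'(z)^2 \leq h(z) h''(z)$, which reduces the task to bounding $h''(z)/h(z)$. To handle the behavior as $z \to \infty$, I would factor out the leading exponential by writing $d^\ast := \underline{d}_\sizeb$, so that
\[
h(z) \geq \sizeb(d^\ast)\, e^{-d^\ast z}, \qquad h''(z) = \sum_{k \geq d^\ast} k^2 \sizeb(k) e^{-kz} \leq e^{-d^\ast z}\, \E_\sizeb[K^2].
\]
Dividing, the exponentials cancel and one obtains $|\Phi_\sizeb'(z)| \leq \E_\sizeb[K^2]/\sizeb(d^\ast)$, which is finite and independent of $z$ since $\sizeb(d^\ast) > 0$ by the definition of $\underline{d}_\sizeb$.

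The only subtle point in the argument is precisely this factorization step: the naive bound $h(z) \geq \sizeb(0)$ is vacuous when $\sizeb(0) = 0$, and extracting the common factor $e^{-d^\ast z}$ from both numerator and denominator is what yields a bound that is uniform on the full half-line $[0,\infty)$ rather than just on compact subsets. Once the uniform bound on $|\Phi_\sizeb'|$ is established, the mean value theorem delivers the Lipschitz estimate with constant $L := \E_\sizeb[K^2]/\sizeb(d^\ast)$, completing the argument.
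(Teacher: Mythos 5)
Your proof is correct, and while it shares the overall strategy of the paper's proof (differentiate $\Phi_\sizeb$ and show the derivative is uniformly bounded, then invoke the mean value theorem), the way you obtain the uniform bound is genuinely different. The paper writes $\Phi_\sizeb'(z) = (\Phi_\sizeb(z))^2 - M_\sizeb''(-z)/M_\sizeb(-z)$ and controls the two pieces separately: boundedness of $\Phi_\sizeb$ is imported from Lemma \ref{lemm:Phi_props}, and the ratio $M_\sizeb''(-z)/M_\sizeb(-z)$ is handled by computing its limit $\underline{d}_\sizeb^2$ as $z\to\infty$ and combining this with continuity on compacts. You instead observe via Cauchy--Schwarz that $h'(z)^2 \le h(z)h''(z)$, so that $\Phi_\sizeb' \le 0$ and $|\Phi_\sizeb'(z)| \le h''(z)/h(z)$, and then bound this ratio globally by factoring $e^{-\underline{d}_\sizeb z}$ out of both numerator and denominator. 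Your route is more self-contained (it does not lean on Lemma \ref{lemm:Phi_props}) and yields an explicit Lipschitz constant $\E_\sizeb[K^2]/\sizeb(\underline{d}_\sizeb)$, at the cost of a slightly cruder constant; the paper's route reuses limits it has already established for other purposes (notably in the outbreak-size analysis), which is why it is organized the way it is. Both arguments hinge on the same essential point you flag: near $z=\infty$ one must normalize by the leading exponential $e^{-\underline{d}_\sizeb z}$, since the naive lower bound $h(z)\ge\sizeb(0)$ is useless when $\sizeb(0)=0$ --- this is exactly the case distinction the paper makes in \eqref{rel-Mratio} versus \eqref{eq:Phi'_lim}.
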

\begin{proof}
It is easy to see that under the assumption that $\theta$ has a finite third moment, the size-biased distribution $\sizeb$, defined in \eqref{eq:sizab}, has a finite second moment.  Indeed, 
    let $\Yhat$ and $Y$ be  random variables with laws $\sizeb$ and $\offspring$, respectively. By \eqref{eq:sizab}, it is easy to see  that \begin{equation*}
        \E[\Yhat^2]= \frac{\E[Y^3]-2\E[Y^2]+\E[Y]}{\E[Y]},
    \end{equation*}
    which is finite since $\offspring$ has finite third moment. 
    
    For $z\in[0,\infty)$ note that $M_\sizeb(-z)=\E[e^{-z\Yhat}]\leq 1$,  and so by the dominated convergence theorem 
$\lim_{z\rightarrow\infty}M_{\sizeb}(-z)=\sizeb(0)$. 
    Since $\sizeb$ has finite second moment, again by the dominated convergence theorem $M_\sizeb''(-z)=\E[\Yhat^2e^{-z\Yhat}]\leq \E[\Yhat^2]<\infty$,  $M_{\sizeb}''$ is continuous on $(-\infty, 0)$ and $\lim_{z\rightarrow\infty} M''_{\sizeb}(-z)=0$. 
    Thus,   it follows from the limits established above that
      \begin{equation}
      \label{rel-Mratio} 
  \sizeb(0)>0 \quad \Rightarrow \quad   \lim_{z\rightarrow\infty} \frac{M''_{\sizeb}(-z)}{M_{\sizeb}(-z)} = \frac{0}{\sizeb(0)}= 0
   \end{equation}
    Now,  setting $\Phi:=\Phi_{\sizeb}$ for conciseness, it follows that 
    \begin{equation}
    \label{eq-Phiprime}
        \Phi'(z):=\frac{d}{dz}\Phi(z)= \frac{(M'_{\sizeb}(-z))^2- M''_{\sizeb}(-z)M_\sizeb(-z)}{M^2_{\sizeb}(-z)} = (\Phi(z))^2-\frac{M''_{\sizeb}(-z)}{M_{\sizeb}(-z)}.
    \end{equation}
   By Lemma \ref{lemm:Phi_props}, $\Phi$ is bounded on $[0,\infty)$. Furthermore, $M''_{\sizeb}(0)/M_{\sizeb}(0)=\E[\Yhat^2]$. 
   % and by the dominated convergence theorem, $\lim_{z\rightarrow\infty} M''_{\sizeb}(-z)=0$.
   Recall the quantity $\underline{d}_\sizeb  = \min\{d\in\natzero : \sizeb(d)>0\}$ introduced in \eqref{eq-undd}. 
Using \eqref{rel-Mratio} for the case $\underline{d}_\sizeb=0$ (which is equivalent to  $\sizeb(0)>0$), and a similar argument as \eqref{eq:Phi_sizeb(infty)} in Lemma \ref{lemm:Phi_props}  for the case $\underline{d}_\sizeb>0$,
   we have 
   \begin{equation} \label{eq:Phi'_lim}
       \lim_{z\rightarrow\infty} \frac{M''_{\sizeb}(-z)}{M_{\sizeb}(-z)} = \underline{d}_\sizeb^2.
   \end{equation}
   
 Together with \eqref{eq-Phiprime}, the continuity of $\Phi$ on $[0,\infty)$ and the continuity of  $M_\sizeb$ and    $M^{\prime \prime}_\sizeb$ on $(-\infty, 0]$, this implies that $\Phi'$ is uniformly bounded on $[0,\infty)$.  This completes the proof. 
\end{proof}

\begin{proof}[Proof of Proposition \ref{prop:f_If_SF_I}] By Assumption \ref{assu:beta_rho}, $\beta$ and $\rho$ are continuous in $t$. By Lemma \ref{lemm:Phi_props}, $\Phi(z)$ is continuous in $z\in[0,\infty)$. Therefore, the right-hand side of the ODE \eqref{eq:f_If_SF_I} is continuous and so by Peano's existence theorem, there exists  $\tau\in(0,\infty)$ and a solution $(f_S$,$f_I$,$F_I):[0,\tau)\rightarrow\R^3$ to  \eqref{eq:f_If_SF_I}-\eqref{eq:ffF-initial} on $[0,\tau)$.

Next, fix $s\in[0,\tau)$. We claim that $(f_S(s)$,$f_I(s)$,$F_I(s))\in[0,1]\times[0,1]\times[0,\infty)$. Since $f_S(0)=s_0\in(0,1)$ and 
$f_I(0)=1-s_0\in(0,1)$ 
and the right-hand side of the first (respectively, second)  equation in  \eqref{eq:f_If_SF_I} is equal to $0$ whenever $f_S(t)=0$ (respectively, $f_I(t) =0$), it follows that $f_S(s)\geq 0$ (respectively, $f_I(s) \geq 0$). In turn,  this implies that $\dot F_I(s) \geq 0$, and therefore that $F_I(s) >  0$ (since $F_I(0)=0$ and $\dot{F}_I(0+) = f_I(0) > 0$).   Now, by summing the first two equations in \eqref{eq:f_If_SF_I}, we obtain 
\begin{equation}
    \dot f_I(s)+ \dot f_S(s) =f_I(s) \beta_s \left ( f_S(s)+ f_I(s) - 1 -\frac{\rho_s}{\beta_s}\right).
\end{equation}
Since $f_S(0)+f_I(0)=1$, it follows that $f_S(s)+f_I(s)$ is strictly decreasing in $s$, and in particular  $f_S(s)+f_I(s)\leq 1$. 

 Finally,  by Lemma \ref{lemm:phi_lip}, the right-hand side of  \eqref{eq:f_If_SF_I} is  Lipschitz continuous in $(f_S,$ $f_I$, $F_I)$ on $[0,1]\times[0,1]\times[0,\infty)
$.  Thus, it  follows that $\tau=\infty$ and \eqref{eq:f_If_SF_I}-\eqref{eq:ffF-initial} has a unique solution for all times.   \end{proof}

Proposition \ref{prop:gggG} is proved in the same way as the proof of \ref{prop:f_If_SF_I} by first using Lemma \ref{lemm:Phi_props} and Assumption \ref{assu:beta_lambda_rho} to establish the existence of $(g_S$, $g_E,$ $g_I,$ $G_I)$ solving \eqref{eq:gggG-Odes}-\eqref{eq:gggG-Odes_initial} on $[0,\tau)$ for some $\tau\in(0,\infty)$, then  showing that such a solution stays in $[0,1]\times [0,1]\times [0,1]\times[0,\infty)$, where, by Lemma \ref{lemm:phi_lip}, the right-hand side of \eqref{eq:gggG-Odes} is Lipschitz.

\bibliography{bibliography}

%%%%%%%%%%%%%%%%%
\end{document}